\newtheorem {lemma}{Lemma}
\newtheorem {theorem} {Theorem}
\newtheorem {corollary}{Corollary}
\newtheorem {proposition}{Proposition}
\newtheorem{definition}{Definition}
\tiny\color{gray},
\begin{document}

\title{On the multiplicity of matching polynomial roots and
$\theta$-critical graphs}

\author{Leyou Xu\footnote{School of Mathematical Sciences, South China Normal University,
Guangzhou 510631, P.R. China. Email: leyouxu@m.scnu.edu.cn.} }

\date{}
\maketitle

\begin{abstract}
The matching polynomial of a graph encodes rich combinatorial information through its roots.
We determine the maximum multiplicity of a non-zero matching polynomial root and characterize all graphs attaining the bound. We also generalize the result to any fixed $\theta$, where the graphs attaining the bound are related to $\theta$-critical graphs. Inspired by these graphs, we give a constructive answer to Godsil's question. 
Finally, we show the existence of $1$-critical tree of order $n$ for all $n\ge 9$ and $1$-critical graph of order $n$ for all $n\ge 5$, and describe a method to construct $1$-critical graphs from existing ones. 

{\it Keywords: matching polynomial, root, multiplicity, $\theta$-critical} 
\end{abstract}

\section{Introduction}

We consider simple graphs in this paper, i.e., graphs without loops and multiedges. For a graph $G$, $V(G)$ and $E(G)$ denote the vertex set and edge set of $G$, respectively. If it is not mentioned elsewhere, we assume that $|V(G)|=n$.
A matching of a graph $G$ is a set of pairwise non-adajcent edges of $G$. Evidently, the maximum matching of $G$ contains at most $\lfloor\frac{n}{2}\rfloor$ edges. For any non-negative integer $k$, let $p_G(k)$ be the number of matching with $k$ edges in $G$. It's clear that $p_G(0)=1$ and $p_G(1)=|E(G)|$. The matching polynomial of $G$ is defined as 
\[
\mu(G,x)=\sum_{k=0}^{\lfloor n/2\rfloor}(-1)^kp_G(k)x^{n-2k}.
\]
The matching polynomial is related to the characteristic polynomial $\phi(G,x)$, and $\mu(G,x)=\phi(G,x)$ if and only if $G$ is a forest \cite[Corollary 4.2]{GG}. Matching polynomial is exactly the average of the characteristic polynomial over all signs of the edge set \cite{GG1}. A remarkable result on bipartite Ramanujan graphs \cite{WSS} was proved using this fact, along with the upper bound on the absolute value of matching polynomial roots. Building on this idea, we focus instead on the upper bound on the multiplicity of matching polynomial roots.

A graph $G$ is said to have a perfect matching if it has a matching covering each vertex of $G$, i.e., it has a matching with $\frac{n}{2}$ edges, which reqiures $n$ to be even. This shows that $0$ is not a root of $\mu(G,x)$. Let $m(\theta,G)$ be the multiplicity of $\theta$ as a root of $\mu(G,x)$. For $\theta=0$, $m(\theta,G)$ is known as the deficiency of $G$, which is the number of vertices missed by some maximum matching. Many well-known results, such as Gallai's lemma and the Edmonds-Gallai structure theorem, are related to $m(0,G)$. Ku and Chen \cite{KC} gave a generalization of the Edmonds–Gallai structure theorem for an arbitrary root $\theta$. To our knowledge, there are very few results concerning $m(\theta,G)$ when $\theta\ne 0$. Motivated by this, we investigate $m(\theta,G)$ for non-zero $\theta$ in this paper. 

A well-known result tells the maximum multiplicity of a root of $\mu(G,x)$ is at most the minimum number of vertex disjoint paths needed to cover the vertex set
of $G$, see \cite[Theorem 4.5]{Godsil} for detailed proof. This leads that if $G$ has a Hamiltonian path, then all roots of $\mu(G,x)$ are simple. Ku and Wong \cite{KW1,KW2} gave a necessary and sufficient condition for the maximum multiplicity of a root of $\mu(G,x)$ to be equal to the minimum number of vertex disjoint paths needed to cover it.
On the other hand, it's nature to consider the maximum multiplicity of a given root $\theta$ of $\mu(G,x)$. It's easy to see that $m(0,G)\le n-2$ with equality if and only if $G$ is a star. So the first goal of this work is to determine the maximum multiplicity of a non-zero matching polynomial root. We also characterize all graphs attaining the bound, in which the set $\mathcal{F}_n$ is defined in Section 3. The first main result is stated as follows.

\begin{theorem}\label{bound}
Let $G$ be a connected graph of order $n\ge 7$.
If $\theta\ne 0$, then $m(\theta,G)\le \lfloor\frac{n-3}{2}\rfloor$ with equality if and only if $\theta=\pm 1$ and $G\in \mathcal{F}_{n}$.
\end{theorem}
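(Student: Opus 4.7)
The plan is to combine the symmetry of the matching polynomial with Godsil's interlacing and to induct on $n$. From $\mu(G,-x)=(-1)^n\mu(G,x)$, the non-zero roots of $\mu(G,x)$ come in pairs $\pm\theta$ with equal multiplicities, so writing $k=m(\theta,G)$ with $\theta\ne 0$ gives $\mu(G,x)=(x^2-\theta^2)^k q(x)$ with $\deg q=n-2k$ and $q$ of the same parity as $n$. The target bound therefore amounts to showing $\deg q\ge 3$. Godsil's interlacing $|m(\theta,G)-m(\theta,G-v)|\le 1$ and the path-cover bound $m(\theta,G)\le p(G)$ are the two main levers, together with the recurrence $\mu(G,x)=x\mu(G-v,x)-\sum_{u\sim v}\mu(G-u-v,x)$.

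For the upper bound I would induct on $n$, with base cases $n=7,8$ handled by a direct integrality analysis: the coefficients of $\mu(G,x)$ are $(-1)^j p_G(j)$ with $p_G(j)\in\mathbb{Z}_{\ge 0}$, and matching these coefficients against the expansion of $(x^2-\theta^2)^k q(x)$ with $q$ of the allowed shape forces restrictions on $\sum_v d(v)$, $\sum_v d(v)^2$, etc., that quickly rule out $k=\lfloor(n-3)/2\rfloor+1$ on a connected graph of that order. For the inductive step I split on whether $G$ is $\theta$-critical, i.e.\ whether every vertex satisfies $m(\theta,G-v)=m(\theta,G)-1$. If not, pick a non-essential $v$ with $G-v$ still connected (possible unless every non-essential vertex is a cut vertex); then interlacing and the inductive hypothesis give
\[
m(\theta,G)\le m(\theta,G-v)\le\left\lfloor\tfrac{n-4}{2}\right\rfloor\le\left\lfloor\tfrac{n-3}{2}\right\rfloor.
\]
If every non-essential vertex is a cut-vertex, I split $G-v$ into its components $G_1,\dots,G_t$, apply the inductive hypothesis to each component with $n_i\ge 7$, and apply the direct symmetric bound $m(\theta,G_i)\le\lfloor n_i/2\rfloor$ on the small ones; the slack contributed by the cut vertex (together with the fact that at least one component has few vertices) recovers the target. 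If instead $G$ is $\theta$-critical, the Ku-Chen generalization of the Edmonds-Gallai theorem constrains the local structure of $G$, and combined with the pairing $(x^2-\theta^2)^k\mid\mu(G,x)$ one concludes $n\ge 2k+3$.

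For the equality case, $n-2k\in\{3,4\}$ according to the parity of $n$, and the parity of $q$ forces either $q(x)=x(x^2-\alpha^2)$ or $q(x)=(x^2-\alpha^2)(x^2-\beta^2)$. Substituting this shape into the vertex-deletion recurrence at essential vertices $v$ propagates $(x^2-\theta^2)^{k-1}$ into $\mu(G-v,x)$, and combined with integrality of the $p_G(j)$ this should force $\theta\in\{\pm 1\}$; the small extremal example $P_5$, whose matching polynomial factors as $x(x^2-1)(x^2-3)$, already singles out $\theta=\pm 1$ as the natural candidate and suggests that $P_5$-like pieces appear as building blocks. Once $\theta=\pm 1$ is established, it remains to identify $G$ as a member of $\mathcal{F}_n$ by matching $\mu(G,x)=(x^2-1)^k q(x)$ against the matching polynomials of the graphs constructed in Section~3, and conversely to verify by direct computation that every member of $\mathcal{F}_n$ realises the bound. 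The main obstacle is precisely this last step: cleanly reducing the continuous parameter $\theta$ to $\pm 1$ and pinning down the finite family $\mathcal{F}_n$ exactly, which is where the structural theory of $\theta$-critical graphs developed later in the paper is unavoidable; the inductive upper-bound argument, by contrast, is a relatively routine combination of interlacing and case analysis.
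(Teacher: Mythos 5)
Your inductive upper-bound argument has a genuine hole, and it sits exactly at the extremal configuration. In the branch where the non-essential vertex $v$ you delete is a cut vertex, non-essentiality only gives $m(\theta,G)\le m(\theta,G\setminus v)=\sum_i m(\theta,G_i)$; if all components are small (the worst case being $G\setminus v\cong\frac{n-1}{2}K_2$, which is precisely what happens for the graphs in $\mathcal{F}_n$), your component bounds sum to $\frac{n-1}{2}$, overshooting $\lfloor\frac{n-3}{2}\rfloor$ by one or two, and the "slack contributed by the cut vertex" you invoke simply is not there. The paper avoids this entirely: when $m(\theta,G)\ge 2$ the graph is not $\theta$-critical, so by the $\theta$-Gallai theorem of Ku--Chen (Corollary~\ref{pos}) there is a $\theta$-\emph{positive} vertex $u$, and then $m(\theta,G)+1=m(\theta,G\setminus u)\le\lfloor\frac{n-1}{2}\rfloor$ because $\theta$ and $-\theta$ have equal multiplicities in the $(n-1)$-vertex graph $G\setminus u$; together with the simplicity of $\pm\rho$ (which already gives $2m(\theta,G)\le n-2$) this yields the bound with no induction and no base-case "integrality analysis" (which you assert but do not substantiate). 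The strict drop at a positive vertex, not the non-strict inequality at a non-essential one, is the indispensable lever.

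The equality case is likewise not established in your proposal --- you explicitly flag the reduction to $\theta=\pm1$ and the identification of $\mathcal{F}_n$ as the unresolved obstacle, and the coefficient/recurrence matching you sketch does not accomplish it. The paper's resolution is short: at equality, deleting a $\theta$-positive vertex $u$ leaves a graph whose matching polynomial roots are exactly $\pm\theta$ each of multiplicity $\frac{n-1}{2}$ (odd $n$), or $\pm\theta$ of multiplicity $\frac{n-2}{2}$ together with a single root $0$ (even $n$); since every connected component has a simple largest root (Lemma~\ref{simple}), $G\setminus u$ must split into $\frac{n-1}{2}$ components each with roots $\pm\theta$, hence all $K_2$ and $\theta=\pm1$, or into $\frac{n-4}{2}$ copies of $K_2$ plus one odd component which a three-vertex check forces to be $K_1$. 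That is what pins down $\mathcal{F}_n$; the converse direction needs interlacing at the high-degree vertex plus, for even $n$, the explicit computation of Lemma~\ref{fn-} showing the multiplicity is exactly $\frac{n-4}{2}$. So while your use of the $\pm\theta$ pairing and interlacing is on the right track, the proof as proposed neither closes the upper bound in its critical branch nor delivers the characterization of equality.
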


As is shown in Theorem \ref{bound}, the upper bound is attained only if $\theta=\pm 1$. However, the maximum multiplicity of $\theta$ as a matching polynomial root remains unclear for fixed non-zero $\theta$ with $\theta\ne \pm 1$. Here, we turn our attention to the case for general non-zero $\theta$.
The second goal of this paper is to determine the maximum multiplicity of any given non-zero root $\theta$ as a matching polynomial root. The corresponding result is stated as follows. The notations $n_\theta$ and $\mathcal{H}_{\theta}^n$ are defined in Section 4, in which graphs are related to $\theta$-critical graphs. It's evident that if $G$ is $\theta$-critical, then $m(\theta,G)=1$. So we only need to consider graphs that is not $\theta$-critical.

\begin{theorem}\label{essbound}
Let $G$ be a connected graph that is not $\theta$-critical. Then $m(\theta,G)\le \frac{n-n_\theta-1}{n_\theta}$ with equality if and only if $n\equiv 1\pmod {n_\theta}$ and $G\in \mathcal{H}_{\theta}^n$.
\end{theorem}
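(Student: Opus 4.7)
The plan is to apply the $\theta$-analog of the Gallai--Edmonds structure theorem of Ku and Chen \cite{KC}. That result supplies a canonical partition $V(G) = D_\theta(G) \sqcup A_\theta(G) \sqcup C_\theta(G)$ with the properties that every component of $G[D_\theta(G)]$ is $\theta$-critical, $A_\theta(G) = N(D_\theta(G)) \setminus D_\theta(G)$, and
\[
m(\theta, G) = c - |A_\theta(G)|,
\]
where $c$ is the number of components of $G[D_\theta(G)]$. I would take this decomposition as the workhorse and reduce the theorem to a short counting argument plus a local analysis at a single ``hub'' vertex.

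Assume $m(\theta, G) \ge 1$, so that $D_\theta(G) \ne \emptyset$ (the case $m(\theta, G) = 0$ makes the bound automatic once $n \ge n_\theta + 1$). I first argue $|A_\theta(G)| \ge 1$: otherwise $D_\theta(G)$ would be a union of connected components of $G$, and connectedness of $G$ would force $D_\theta(G) = V(G)$ with $c = 1$, making $G$ itself $\theta$-critical, contradicting the hypothesis. By the definition of $n_\theta$, every component of $G[D_\theta(G)]$ has at least $n_\theta$ vertices, so $n \ge c\, n_\theta + |A_\theta(G)| + |C_\theta(G)|$, and hence
\[
m(\theta, G) = c - |A_\theta(G)| \le \frac{n - |C_\theta(G)| - (n_\theta + 1)|A_\theta(G)|}{n_\theta} \le \frac{n - n_\theta - 1}{n_\theta},
\]
using $|A_\theta(G)| \ge 1$ and $|C_\theta(G)| \ge 0$ at the last step.

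In the equality case the above chain is tight, so $|A_\theta(G)| = 1$, $|C_\theta(G)| = 0$, $c = (n-1)/n_\theta$, and every component of $G[D_\theta(G)]$ is a $\theta$-critical graph of the minimum order $n_\theta$. Integrality of $c$ then forces $n \equiv 1 \pmod{n_\theta}$ automatically. What remains is to match this structure with the family $\mathcal{H}_\theta^n$: the unique vertex $w \in A_\theta(G)$ is a hub joined to $c$ vertex-disjoint minimum-order $\theta$-critical blocks, and I would pin down the admissible adjacencies by expanding
\[
\mu(G, x) = x\,\mu(G - w, x) - \sum_{u \sim w} \mu(G - w - u, x)
\]
at $x = \theta$, exploiting that each vertex of a minimum $\theta$-critical block is essential for $\theta$. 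The converse direction, that every graph in $\mathcal{H}_\theta^n$ attains $m(\theta, G) = (n - n_\theta - 1)/n_\theta$, comes out of the same recurrence.

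The main obstacle is this equality characterization rather than the upper bound itself: once the Ku--Chen decomposition is in hand the inequality is a one-line counting argument, but matching the extremal configurations with the explicit family $\mathcal{H}_\theta^n$ requires identifying precisely which vertices of a minimum $\theta$-critical graph may serve as neighbors of $w$ while preserving the property that the block still lies in $D_\theta(G)$ of the ambient graph. This uses the finer essential-vertex content of the structure theorem and is where the constraints defining $\mathcal{H}_\theta^n$ enter.
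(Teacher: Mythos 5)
Your route is genuinely different from the paper's. The paper never invokes a Gallai--Edmonds decomposition: it proves the bound by induction on the multiplicity (Lemma \ref{order}, Theorem \ref{orderr}), using only Corollary \ref{pos} (a connected non-$\theta$-critical graph has a $\theta$-positive vertex) together with interlacing, and then repeats the same counting on the components of $G\setminus u$ for a $\theta$-positive vertex $u$ to force the extremal structure. Your whole argument instead rests on the exact strength of the Ku--Chen result you cite: a canonical partition $V(G)=D_\theta\sqcup A_\theta\sqcup C_\theta$ with $A_\theta$ the set of $\theta$-special vertices, every component of $G[D_\theta]$ being $\theta$-critical, and the multiplicity formula $m(\theta,G)=c-|A_\theta|$. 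That is the \emph{full} Gallai--Edmonds analogue, whereas what the present paper imports from \cite{KC} is much weaker: the $\theta$-version of Gallai's lemma (Theorem \ref{essential}) and the ``partial analog'' Lemma \ref{neutral}, which only controls vertex types under deletion of a $\theta$-\emph{neutral} vertex. Unless you can point to the formula $m(\theta,G)=c-|A_\theta|$ and the $\theta$-criticality of the components of $G[D_\theta]$ stated in \cite{KC} for arbitrary non-zero $\theta$, you must prove them, and doing so from Corollary \ref{pos} and interlacing is essentially the same work as the paper's Lemma \ref{order}; as written, this unverified transfer of the classical statement to $\theta\ne 0$ is the gap. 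Granting that decomposition, your counting is correct: it gives the bound, and in the equality case forces $|A_\theta|=1$, $C_\theta=\emptyset$ and every component of $G[D_\theta]$ to be $\theta$-critical of order exactly $n_\theta$, which by connectivity already means $G\in\mathcal{H}_\theta^n$.

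Two further points on the equality case. First, the extra analysis you anticipate about ``which vertices of a minimum $\theta$-critical block may serve as neighbors of the hub'' is unnecessary: $\mathcal{H}_\theta^n$ imposes no restriction on the attachment edges beyond at least one edge to each block, so the structure forced above is already membership in $\mathcal{H}_\theta^n$. Second, the converse (every $H\in\mathcal{H}_\theta^n$ attains $m(\theta,H)=\frac{n-n_\theta-1}{n_\theta}$) does not come out of evaluating $\mu(H,x)=x\,\mu(H\setminus w,x)-\sum_{u\sim w}\mu(H\setminus w\setminus u,x)$ at $x=\theta$: when $t=\frac{n-1}{n_\theta}\ge 2$, every term on the right vanishes at $\theta$, so the identity certifies only that $\theta$ is a root, not its multiplicity. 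The clean argument is the paper's: $m(\theta,H\setminus w)=t$, hence $m(\theta,H)\ge t-1$ by interlacing (Lemma \ref{interlacing}), while the upper bound (your counting, or Lemma \ref{order}) gives $m(\theta,H)\le t-1$; note also that $w$ is then $\theta$-positive, so $H$ is not $\theta$-critical and genuinely lies in the scope of the theorem.
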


Inspired by the graphs attaining the bound above, we give a constructive answer to Godsil's question \cite[Section 6]{Godsil1}, which asked whether a $\theta$-positive vertex must be $\theta$-special when $\theta\ne 0$. We give a constructive answer regardless of whether $\theta=0$ or not.

\begin{theorem}\label{spec}
A $\theta$-positive vertex does not need to be $\theta$-special.
\end{theorem}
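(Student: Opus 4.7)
The theorem is purely existential, so the plan is to exhibit an explicit graph $G$ with a vertex $v$ that is $\theta$-positive, i.e., $m(\theta, G - v) = m(\theta, G) + 1$, but that is not $\theta$-special. The author's remark that the construction is inspired by the extremal graphs of Theorem~\ref{essbound} strongly suggests taking $G \in \mathcal{H}_\theta^n$ or a small variant: such a $G$ is built by gluing several copies of a $\theta$-critical graph $H$ at a common pivot vertex, and that pivot is precisely the vertex whose removal splits $G$ into many components in which $\theta$ remains a root of the matching polynomial.

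Concretely, I would fix a small $\theta$-critical graph $H$ with a distinguished vertex $u$, form $G$ by identifying (or joining) $k$ copies of $H$ along $u$ as in the definition of $\mathcal{H}_\theta^n$, and take $v = u$ or a convenient neighbour. Using the deletion recurrence
\[
\mu(G, x) = \mu(G - v, x) - \sum_{w \sim v} \mu(G - v - w, x)
\]
together with multiplicativity of $\mu$ over disjoint unions, both $\mu(G, x)$ and $\mu(G - v, x)$ factor through powers of $\mu(H, x)$ and $\mu(H - u, x)$; since $\theta$ is a simple root of the former and not a root of the latter by $\theta$-criticality, the identity $m(\theta, G - v) = m(\theta, G) + 1$ follows from a straightforward exponent count. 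This part essentially reuses the computations already needed for Theorem~\ref{essbound}.

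The substantive step is verifying that $v$ fails to be $\theta$-special. This cannot be detected from $\mu$ alone; one has to apply the structural definition of \cite{Godsil1}, or the $\theta$-analogue of the Gallai--Edmonds decomposition in \cite{KC}, and rule out the required witness at $v$. I would do this by inspecting the construction directly: showing that the component of the $\theta$-decomposition that would make $v$ special is absent, or equivalently that $\mu(G - N[v], x)$ does not satisfy the relation prescribed for $\theta$-special vertices. The main obstacle is precisely this last verification, because specialness is a global structural property while the matching polynomial alone only distinguishes the essential/neutral/positive classification; choosing $H$ so that the component structure around $v$ is explicit is therefore crucial.

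To cover both $\theta = 0$ and $\theta \neq 0$ as the statement demands, I would either tune $H$ so that a single template handles both cases, or supply a short separate example for $\theta = 0$ drawn directly from the classical Gallai--Edmonds structure, where an unsaturated vertex lying outside the ``$D$-set'' already furnishes a $0$-positive vertex that is not $0$-special.
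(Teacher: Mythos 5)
Your plan has a genuine gap at exactly the point you flag as ``the substantive step.'' Taking $G\in\mathcal{H}_\theta^n$ and choosing $v=u$ (the hub) or one of its neighbours does not work: in an extremal graph of Theorem~\ref{essbound} the components of $G\setminus u$ are $\theta$-critical, their vertices are $\theta$-essential in $G$, and hence the hub is $\theta$-positive \emph{and} $\theta$-special, while its neighbours are $\theta$-essential rather than $\theta$-positive. (For $\theta=\pm1$ this is easy to check directly on the spider $T_n$.) So the unmodified extremal graph furnishes no witness, and the ``small variant'' you would need is precisely the content of the paper's construction $\mathcal{Q}_\theta^k$ in Proposition~\ref{qk}: one block $G_0'$ of the extremal graph is altered by adding a twin $v'$ of a vertex $v$, which forces $\theta$ not to be a root of $\mu(G_0,x)$; then $v$ and $v'$ become $\theta$-positive, and every neighbour of $v$ (namely $u$, $v'$, and the remaining vertices of $G_0$) is shown to be $\theta$-positive or $\theta$-neutral, so $v$ is not $\theta$-special. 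Your proposal never identifies this modification, and without it the exponent-count argument you sketch only reproduces the situation where the positive vertex is special.

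A secondary misconception: you claim non-specialness ``cannot be detected from $\mu$ alone'' and would require the $\theta$-Gallai--Edmonds machinery or a relation involving $\mu(G-N[v],x)$. In fact $\theta$-specialness is, by definition, determined by the essential/neutral/positive classification of $v$ and of its neighbours, i.e.\ by the multiplicities $m(\theta,G\setminus w)$ for $w\in N[v]$, all of which are read off from matching polynomials of vertex-deleted subgraphs. That is exactly how the paper verifies the property, using Lemmas~\ref{order} and~\ref{neutral} and the identification $G\setminus v\in\mathcal{H}_\theta^{(k+1)n_\theta+1}$; no decomposition theorem beyond Lemma~\ref{neutral} is needed. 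Your closing remark about $\theta=0$ (a vertex in the $C$-part of the Gallai--Edmonds decomposition is $0$-positive but not $0$-special) is correct and matches the paper's example $\mathcal{Q}_0^k=\{K_1\vee(K_2\cup kK_1)\}$, but Godsil's question concerns $\theta\ne 0$, and for that case your proposal as written does not yet yield a proof.
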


A $0$-critical graph is known as the factor critical graph, which is a graph with an odd number of vertices in which deleting one vertex in every possible way results in a graph with a perfect matching. $0$-critical graphs have been well studied in the literature, almost nothing is known about $\theta$-critical graphs for $\theta\ne 0$. We aim to explore the structure of $\theta$-critical graphs in general or for some specific $\theta$. Godsil stated in \cite{Godsil1} that it might be interesting to investigate the case $\theta=1$ in depth. This gives us a motivation to investigate $1$-critical graphs. It's well-known that $0$-critical graphs have odd order. But for $1$-critical graphs, it does not depend on whether the order is even or odd.

\begin{theorem}\label{1}
For any $n\ge 9$, there exists a $1$-critical tree of order $n$. Moreover, for any $n\ge 5$, there exists a $1$-critical graph of order $n$ that is not tree.
\end{theorem}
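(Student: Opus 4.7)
The plan is to prove Theorem \ref{1} in two stages: first exhibit small $1$-critical examples (trees and non-trees) that will serve as base cases, and then describe a local enlargement operation that preserves $1$-criticality while increasing the order by a fixed constant. Iterating the enlargement from the base cases then produces $1$-critical graphs of every admissible order.

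For the non-tree statement, I would first exhibit explicit $1$-critical non-tree graphs of orders $5$, $6$, and $7$, verifying $1$-criticality by directly computing $\mu(G,x)$ together with every $\mu(G-v,x)$ via the vertex-deletion recurrence for the matching polynomial. I would then design a local extension $G\mapsto G'$, for instance identifying a distinguished vertex of $G$ with a vertex of a small fixed $1$-critical gadget $H$ of a controlled type, so that the new graph has order $n+3$ and remains $1$-critical. Combined with the three base cases of orders $5$, $6$, and $7$, iteration then yields $1$-critical non-tree graphs of every order $n\ge 5$.

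For the tree statement the strategy is analogous but strictly more delicate, since no compensating edges are available. I would first produce explicit $1$-critical trees of orders $9$, $10$, and $11$, constructed as caterpillars or spiders with carefully tuned leg lengths and verified by direct computation. The threshold $n\ge 9$ is a genuine one: small-order trees are easily ruled out, since for example $\mu(P_2\cup P_2,x)=(x^2-1)^2$ already shows that the central vertex of $P_5$ is $1$-positive rather than $1$-essential, and a short case analysis eliminates trees on $\le 8$ vertices. I would then describe a tree-preserving enlargement $T\mapsto T'$ that attaches a small pendant subtree at a distinguished vertex of $T$, adds exactly three new vertices, and preserves $1$-criticality; induction in steps of $3$ then covers every $n\ge 9$.

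The main obstacle is verifying that the chosen enlargement actually preserves $1$-criticality. After applying the vertex-deletion recurrence from \cite{Godsil} at the attachment vertex, one must check that $m(1,G'-v)=m(1,G')-1$ for every $v\in V(G')$, and analogously in the tree construction. The case analysis splits according to whether $v$ lies in the original graph, in the attached gadget, or at the identification point itself; in each case the required multiplicity drop has to be read off from an explicit polynomial identity expressing $\mu(G',x)$ in terms of $\mu(G,x)$, $\mu(G-v,x)$, and the matching polynomial of the gadget. Choosing the gadget (and the distinguished attachment vertex) so that this case analysis goes through uniformly for all base cases, while keeping the added order constant at $3$, is the heart of the argument.
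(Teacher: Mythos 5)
Your outline has the right shape (base cases plus a period-$3$ induction), but the step you defer — a uniform order-$(+3)$ enlargement that preserves $1$-criticality — is precisely the part that cannot be black-boxed, and in its most natural form it is false. Take a $1$-critical tree $T$ and attach a pendant path $w_1w_2w_3$ at a vertex $v$ (edge $vw_1$). Then $\mu(T',1)=\mu(T,1)\mu(P_3,1)-\mu(T\setminus v,1)\mu(P_2,1)=-\mu(T,1)=0$, so $1$ is still a root; but $T'\setminus w_1=T\cup K_2$ and $\mu(K_2,1)=0$, so $m(1,T'\setminus w_1)=2$ while $m(1,T')=1$: the vertex $w_1$ is $1$-positive, not $1$-essential, and $T'$ is not $1$-critical. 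Attaching the $3$-vertex star by its center instead gives $\mu(T',1)=-\mu(T\setminus v,1)\neq 0$, so $1$ is no longer a root at all. The underlying obstruction is that $1$-criticality of $T$ by itself does not determine the values $\mu(\cdot,1)$ of the pieces into which $T'$ falls apart under vertex deletion; to run the induction you must carry the actual values along. This is what the paper does: it fixes three explicit families per residue class ($W_n$, $F_n$, $F_n^*$ for trees; $C_n^*$, $\hat C_n$, $C_n^+$ for unicyclic graphs, with $n=7$ settled by computer), grows each by lengthening an internal path rather than gluing a pendant gadget, and the real induction hypothesis is an exact evaluation such as $\mu(Y_n,1)=-\mu(Y_{n-3},1)$ together with the sign/magnitude tables of Lemmas \ref{y}, \ref{rn} and \ref{yn*}. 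Until you exhibit a concrete gadget and attachment rule and verify the full vertex-deletion case analysis, the proposal does not constitute a proof.

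Two smaller points. First, your remark that ``small-order trees are easily ruled out'' is incorrect: $K_2$ and $W_6$ are $1$-critical trees of orders $2$ and $6$, and the paper needs a computer search to exclude orders $3,4,5,7,8$; in any case non-existence below $9$ is not part of the statement. Second, for the non-tree half the same gap recurs: base cases of orders $5,6,7$ plus an unspecified $+3$ extension is not yet an argument, whereas the paper's unicyclic families reduce, via the same $\mu(\cdot,1)$ recursions, to the tree computations already established.
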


The outline of this paper is as follows. In Section 2, we show some basic and useful properties of matching polynomials. In Section 3, we prove Theorem \ref{bound}. In Section 4, we focus on properties of $\theta$-critical graphs and graphs containing non-zero $\theta$ as a matching polynomial root that are not $\theta$-critical, respectively, and as a consequence, prove Theorems \ref{essbound} and \ref{spec}. In Section 5, we prove Theorem \ref{1} and give a way to obtained a $1$-critical graph from an existing one.

\section{Basic properties}

We introduce some terminology and basic results on matching polynomial, which are useful for our proofs. For undefined notation and terminology we refer to \cite{BM}.

For a vertex $u\in V(G)$, $G\setminus u$ denotes the graph obtained from $G$ by deleting the vertex $u$ and its incident edges. 
For a path $P$ of $G$, $G\setminus P$ denotes the graph obtained from $G$ by deleting vertices of $P$ and all edges incident to vertices of $P$. Particularlly, if $P=uv$, then $G\setminus uv=(G\setminus u)\setminus v$. 
For an edge $e\in E(G)$, $G-e$ denotes the graph obtained from $G$ by deleting the edge $e$. 
For two disjoint graphs $G$ and $H$, $G\cup H$ denotes the disjoint union of these two graphs.

\begin{proposition}\label{rule}\cite{Godsil}
The matching polynomial satisfies the following identities:\\
(i) $\mu(G\cup H,x)=\mu(G,x)\mu(H,x)$;\\
(ii) $\mu(G,x)=\mu(G-e,x)-\mu(G\setminus uv,x)$ if $e=uv\in E(G)$;\\
(iii) $\mu(G,x)=x\mu(G\setminus u)-\sum_{vu\in E(G)}\mu(G\setminus uv,x)$.
\end{proposition}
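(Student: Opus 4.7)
The proposition collects three classical identities for the matching polynomial, all of which come from simple bijective decompositions of the set of $k$-edge matchings, together with bookkeeping of the exponents in the polynomial $\mu(G,x)=\sum_k(-1)^k p_G(k)x^{n-2k}$. My plan is to prove each identity by first establishing the corresponding recurrence for $p_G(k)$ and then translating it through the definition of $\mu$.

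For (i), the plan is to observe that every edge of $G\cup H$ lies in exactly one of $G$ or $H$, so a matching of $G\cup H$ is obtained by taking a matching of $G$ together with a matching of $H$ on disjoint vertex sets. Hence
\[
p_{G\cup H}(k)=\sum_{i=0}^{k} p_G(i)\, p_H(k-i).
\]
Writing $n_G=|V(G)|$, $n_H=|V(H)|$ and substituting into the definition of $\mu(G\cup H,x)$, I would reindex the double sum with $j=k-i$, combine the signs $(-1)^{i+j}$ and the exponents $x^{(n_G-2i)+(n_H-2j)}$, and factor to get $\mu(G,x)\mu(H,x)$.

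For (ii), fix $e=uv\in E(G)$. I would partition the $k$-edge matchings of $G$ into those that avoid $e$, which are exactly the $k$-edge matchings of $G-e$, and those that contain $e$, which are in bijection with the $(k-1)$-edge matchings of $G\setminus uv$ via $M\mapsto M\setminus\{e\}$. This gives $p_G(k)=p_{G-e}(k)+p_{G\setminus uv}(k-1)$. Noting that $G-e$ has $n$ vertices while $G\setminus uv$ has $n-2$, the factor $x^{n-2-2k}$ in $\mu(G\setminus uv,x)$ can be rewritten as $x^{n-2(k+1)}$; shifting the index $k\mapsto k-1$ lines the exponents up with those of $\mu(G,x)$, and the sign change $-(-1)^{k-1}=(-1)^k$ yields (ii). For (iii), an analogous argument works with a vertex $u$ in place of an edge: partition the $k$-edge matchings of $G$ according to whether $u$ is unsaturated (contributing $p_{G\setminus u}(k)$) or saturated by some edge $uv$ with $v\in N_G(u)$ (contributing $\sum_{v\sim u}p_{G\setminus uv}(k-1)$). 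The factor of $x$ multiplying $\mu(G\setminus u,x)$ exactly compensates for the fact that $G\setminus u$ has $n-1$ vertices, so that $x\cdot x^{(n-1)-2k}=x^{n-2k}$ matches the exponent in $\mu(G,x)$.

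There is no real obstacle here: each identity follows from a one-line combinatorial bijection plus careful bookkeeping of signs and exponents. The only point where one has to be slightly cautious is making sure the boundary term $k=0$ in the matching polynomial, which produces no contribution from the $(k-1)$-shifted sum, is handled consistently with the convention $p_H(-1)=0$; but this is automatic once the sums are written out. Since this is a citation of \cite{Godsil}, I would present the argument compactly and mainly emphasize the $p_G(k)$ recurrences, as these are the combinatorial content and are used repeatedly in later sections.
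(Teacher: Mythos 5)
Your proof is correct: the paper gives no proof of this proposition (it is quoted directly from Godsil's book), and your argument via the recurrences $p_{G\cup H}(k)=\sum_i p_G(i)p_H(k-i)$, $p_G(k)=p_{G-e}(k)+p_{G\setminus uv}(k-1)$, and $p_G(k)=p_{G\setminus u}(k)+\sum_{v\sim u}p_{G\setminus uv}(k-1)$, combined with the exponent bookkeeping for the vertex counts $n$, $n-1$, $n-2$, is exactly the standard derivation in the cited source. Nothing further is needed.
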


The following result can be obtained directly from the definition of matching polynomial.

\begin{proposition}\label{symmetric}
The roots of $\mu(G,x)$ are symmetric about the origin.
\end{proposition}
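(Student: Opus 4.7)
The plan is to read the symmetry directly from the definition of $\mu(G,x)$. Note that in
\[
\mu(G,x)=\sum_{k=0}^{\lfloor n/2\rfloor}(-1)^k p_G(k)x^{n-2k},
\]
every exponent $n-2k$ has the same parity as $n$. Consequently, I would substitute $-x$ for $x$, pull out the common factor $(-1)^n$ from each monomial, and obtain the identity $\mu(G,-x)=(-1)^n\mu(G,x)$. In other words, $\mu(G,x)$ is an even polynomial when $n$ is even and an odd polynomial when $n$ is odd.

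From this identity the conclusion is immediate: if $\theta$ is a root of $\mu(G,x)$, then $\mu(G,-\theta)=(-1)^n\mu(G,\theta)=0$, so $-\theta$ is also a root. Moreover, since $\mu(G,x)$ and $(-1)^n\mu(G,-x)$ are the same polynomial, factoring $(x-\theta)^m$ out of one side corresponds to factoring $(-1)^m(x+\theta)^m$ out of the other, so the multiplicities of $\theta$ and $-\theta$ coincide.

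There is no substantial obstacle; the proof amounts to a parity observation on the exponents in the defining sum. The only care needed is to state the result as an identity of polynomials (not merely of functions), so that the equality of multiplicities, and not just the set-theoretic symmetry of the root set, is recorded for later use.
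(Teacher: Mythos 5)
Your argument is correct and is precisely the parity observation the paper has in mind: the paper gives no written proof, stating only that the result "can be obtained directly from the definition," and your identity $\mu(G,-x)=(-1)^n\mu(G,x)$ is that direct derivation, including the preservation of multiplicities that the paper later uses (e.g., $m(-\theta,G)=m(\theta,G)$).
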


Let $\mathcal{P}(u)$ denote the set of all paths starting at $u$ in $G$. The path tree of $G$ relative to $u$, denoted by $T(G,u)$, is a tree with vertes set $\mathcal{P}(u)$, and two paths are adjacent if and only if one is a maximum proper subpath of the other one. The following famous result plays an important role.

\begin{theorem}\label{pathtree}\cite{Godsil}
Let $u$ be a vertex of graph $G$ and $T=T(G,u)$ be the path tree with respect to $u$. Then \[
\frac{\mu(G\setminus u,x)}{\mu(G,x)}=\frac{\mu(T\setminus u,x)}{\mu(T,x)}.
\]
Moreover, if $G$ is connected, then $\mu(G,x)$ divides $\mu(T,x)$. 
\end{theorem}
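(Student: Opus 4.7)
The plan is to prove both parts by induction on $|V(G)|$, applying Proposition \ref{rule}(iii) in parallel to $G$ at the vertex $u$ and to $T$ at the trivial path $(u)\in\mathcal{P}(u)$. The base case $G=K_1$ is immediate: then $T=K_1$ as well, both ratios equal $1/x$, and $\mu(G,x)=\mu(T,x)=x$.

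The structural cornerstone is the following. If $v_1,\ldots,v_k$ are the neighbors of $u$ in $G$, then the neighbors of $(u)$ in $T$ are exactly the length-one paths $(u,v_1),\ldots,(u,v_k)$, so $T\setminus u$ decomposes as a disjoint union of subtrees $T_1,\ldots,T_k$ rooted at those vertices. The correspondence $(u,v_i,w_1,\ldots,w_\ell)\leftrightarrow(v_i,w_1,\ldots,w_\ell)$ is a bijection between paths in $G$ starting with $(u,v_i)$ and paths in $G\setminus u$ starting at $v_i$, and it preserves the maximal-proper-subpath relation; hence $T_i\cong T(G\setminus u,v_i)=T(C_i,v_i)$, where $C_i$ denotes the component of $G\setminus u$ containing $v_i$, and the root $(u,v_i)$ of $T_i$ corresponds to $(v_i)$ on the right.

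Given this, applying Proposition \ref{rule}(iii) to $T$ at $u$ and dividing by $\mu(T\setminus u,x)=\prod_{i}\mu(T_i,x)$ yields
\[
\frac{\mu(T,x)}{\mu(T\setminus u,x)}=x-\sum_{i=1}^{k}\frac{\mu(T(C_i,v_i)\setminus v_i,x)}{\mu(T(C_i,v_i),x)}.
\]
The inductive hypothesis on each (smaller, connected) graph $C_i$ rewrites every summand as $\mu(C_i\setminus v_i,x)/\mu(C_i,x)=\mu(G\setminus uv_i,x)/\mu(G\setminus u,x)$, the other components of $G\setminus u$ cancelling top and bottom. A second application of Proposition \ref{rule}(iii), this time to $G$ at $u$, collapses the right-hand side to $\mu(G,x)/\mu(G\setminus u,x)$, proving the identity.

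For the divisibility, the identity rearranges to $\mu(T,x)=\mu(G,x)\cdot\mu(T\setminus u,x)/\mu(G\setminus u,x)$, so it suffices to show $\mu(G\setminus u,x)\mid\mu(T\setminus u,x)$. Since $G$ is connected, every component $C$ of $G\setminus u$ contains at least one neighbor of $u$; pick such a representative $v_{i(C)}$ per component. The inductive hypothesis on $C$ gives $\mu(C,x)\mid\mu(T(C,v_{i(C)}),x)=\mu(T_{i(C)},x)$, and multiplying over components while absorbing the remaining factors $\mu(T_i,x)$ with $i\notin\{i(C)\}$ into the right side yields $\mu(G\setminus u,x)=\prod_C\mu(C,x)\mid\prod_{i}\mu(T_i,x)=\mu(T\setminus u,x)$. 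The step that most repays care is precisely this last one: when two neighbors of $u$ lie in the same component, the induction delivers one divisibility per component rather than one per neighbor, so one must choose representatives and check that the unused $T_i$ factors merely enlarge the target. The only other subtlety is the structural isomorphism $T\setminus u\cong\bigsqcup_i T(G\setminus u,v_i)$, which is combinatorially routine but must be verified on vertices and on the subpath relation, since paths in $G$ and in $G\setminus u$ differ only by the leading vertex.
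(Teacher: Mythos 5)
This theorem is quoted in the paper from Godsil's book and is not proved there, so there is no internal proof to compare against. Your induction is correct and is essentially the classical argument: the decomposition $T\setminus u\cong\bigsqcup_i T(C_i,v_i)$, the parallel use of Proposition \ref{rule}(iii) at $u$ in $G$ and at the root of $T$, and the per-component (rather than per-neighbor) choice of representatives in the divisibility step are all handled properly.
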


As $\mu(T,x)=\phi(T,x)$ for any forest $T$, Theorem \ref{pathtree} implies that all roots of $\mu(G,x)$ are real. From Theorem \ref{pathtree} and Perron-Frobenius theorem, the following result follows immediately.

\begin{lemma}\label{simple}\cite{Godsil}
If $G$ is connected, then the largest root of $\mu(G,x)$ is simple, and is strictly larger than the largest root of $\mu(G\setminus u,x)$. 
\end{lemma}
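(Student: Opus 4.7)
The plan is to translate everything to the path tree $T=T(G,u)$ via Theorem \ref{pathtree} and then invoke Perron--Frobenius. Because $G$ is connected, every path in $G$ starting at $u$ can be shortened one vertex at a time down to the trivial path $\{u\}$, so $T$ is itself a connected tree. In particular $A(T)$ is an irreducible nonnegative symmetric matrix, and since $\mu(T,x)=\phi(T,x)$ holds for any tree, Perron--Frobenius gives two facts I will use repeatedly: $\rho(T):=\lambda_{\max}(A(T))$ is a simple root of $\mu(T,x)$, and $\rho(T\setminus u)<\rho(T)$ because $A(T\setminus u)$ is a proper principal submatrix of the irreducible matrix $A(T)$.

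Next I would extract two divisibilities from Theorem \ref{pathtree}. Cross-multiplying the displayed identity yields
\[
\mu(G,x)\,\mu(T\setminus u,x)\;=\;\mu(G\setminus u,x)\,\mu(T,x).
\]
The second part of Theorem \ref{pathtree} says $\mu(G,x)\mid\mu(T,x)$, say $\mu(T,x)=\mu(G,x)\,q(x)$ for some polynomial $q$; substituting into the displayed equation immediately gives $\mu(T\setminus u,x)=\mu(G\setminus u,x)\,q(x)$, so $\mu(G\setminus u,x)\mid\mu(T\setminus u,x)$ as well.

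From here the pieces assemble quickly. Setting $x=\rho(T)$ in the displayed identity makes the right-hand side vanish while $\mu(T\setminus u,\rho(T))\neq 0$, forcing $\mu(G,\rho(T))=0$; combined with $\mu(G,x)\mid\mu(T,x)$ this shows that the largest root of $\mu(G,x)$ is precisely $\rho(T)$, and the simplicity of $\rho(T)$ as a root of $\mu(T,x)$ transfers to $\mu(G,x)$. The strict inequality part then falls out of the second divisibility: the largest root of $\mu(G\setminus u,x)$ is at most $\rho(T\setminus u)$, which by Perron--Frobenius is strictly less than $\rho(T)$, i.e.\ strictly less than the largest root of $\mu(G,x)$. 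The only real ingredient beyond Theorem \ref{pathtree} is this Perron--Frobenius input, and since the tree structure is handed to us by the theorem, no genuine combinatorial obstacle remains; the main thing to get right is the bookkeeping that turns the single identity of Theorem \ref{pathtree} into the two divisibilities above.
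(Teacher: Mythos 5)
Your argument is correct and follows exactly the route the paper indicates for this cited lemma: pass to the path tree $T(G,u)$ via Theorem \ref{pathtree}, apply Perron--Frobenius to the connected tree $T$ and its proper subforest $T\setminus u$, and transfer simplicity and the strict spectral-radius inequality back through the divisibilities $\mu(G,x)\mid\mu(T,x)$ and $\mu(G\setminus u,x)\mid\mu(T\setminus u,x)$. The paper offers no further detail than this, so your write-up is essentially the same proof with the bookkeeping (correctly) made explicit.
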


Moreover, the roots of $\mu(G\setminus u, x)$ interlace those of $\mu(G,x)$, more precisely, the following result holds (see also \cite[Corollary 1.3]{Godsil}).

\begin{lemma}\label{interlacing}
For $u\in V(G)$, $m(\theta,G)-1\le m(\theta,G\setminus u)\le m(\theta,G)+ 1$.
\end{lemma}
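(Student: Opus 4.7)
The plan is to combine Theorem \ref{pathtree} with classical Cauchy interlacing applied to the adjacency matrix of the path tree. Since the multiplicity bound is stated purely in terms of matching polynomial roots, the first task is to translate the statement into a statement about polynomials, and then reduce it to a statement on a tree where the matching polynomial coincides with the characteristic polynomial.

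First, I would rewrite the identity of Theorem \ref{pathtree} in the form
\[
\mu(G\setminus u,x)\,\mu(T,x)=\mu(G,x)\,\mu(T\setminus u,x),
\]
where $T=T(G,u)$. Equating the multiplicity of $\theta$ as a root on both sides gives the key balance relation
\[
m(\theta,G\setminus u)-m(\theta,G)=m(\theta,T\setminus u)-m(\theta,T).
\]
Thus it suffices to prove the inequality on the right-hand side, i.e.\ when the graph is a tree (or, after removing $u$, a forest).

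Second, since $T$ is a tree, $\mu(T,x)=\phi(T,x)=\det(xI-A(T))$ where $A(T)$ is the adjacency matrix; similarly $\mu(T\setminus u,x)=\phi(T\setminus u,x)=\det(xI-A(T\setminus u))$, and $A(T\setminus u)$ is the principal submatrix of $A(T)$ obtained by deleting the row and column indexed by $u$. Because $A(T)$ is real symmetric, the Cauchy interlacing theorem applies: the eigenvalues of $A(T\setminus u)$ interlace those of $A(T)$. A standard consequence of interlacing is that for any real number $\theta$, the multiplicity of $\theta$ as an eigenvalue changes by at most one when passing to a principal submatrix of codimension one. Hence
\[
\bigl|m(\theta,T)-m(\theta,T\setminus u)\bigr|\le 1.
\]

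Combining these two steps yields $\lvert m(\theta,G\setminus u)-m(\theta,G)\rvert\le 1$, which is precisely the desired double inequality. There is no real obstacle in this argument; the only mild subtlety is the bookkeeping in the first step, where one must remember that $\mu(G,x)$ divides $\mu(T,x)$ (guaranteed by Theorem \ref{pathtree}) so that the cross-multiplied polynomial identity is legitimate and the multiplicities can be added and subtracted without worrying about cancellations in the rational function.
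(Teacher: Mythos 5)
Your proof is correct and follows essentially the same route as the source the paper cites for this lemma (Godsil's Corollary~1.3): reduce to the path tree via Theorem~\ref{pathtree} and invoke Cauchy interlacing for the adjacency matrix of the tree. One small remark: the divisibility $\mu(G,x)\mid\mu(T,x)$ is not actually needed for your argument, since cross-multiplying the rational-function identity already yields a genuine polynomial identity on which multiplicities of $\theta$ add.
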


As a consequence of Lemma \ref{interlacing}, for any real number $\theta$, we can classify the vertices of a graph based on whether the multiplicity of $\theta$ increases by one, decreases by one, or remains unchanged when the vertex is deleted as follows.

\begin{definition}
Let $\theta$ be a root of $\mu(G,x)$. For $u\in V(G)$, 
\begin{itemize}
\item $u$ is $\theta$-essential if $m(\theta,G\setminus u)=m(\theta,G)-1$,

\item $u$ is $\theta$-neutral if $m(\theta,G\setminus u)=m(\theta,G)$,

\item $u$ is $\theta$-positive if $m(\theta,G\setminus u)=m(\theta,G)+1$.
\end{itemize}
\end{definition}

\begin{lemma}\label{existess}\cite{Godsil1}
If $\theta$ is a root of $\mu(G,x)$, then $G$ has at least one $\theta$-essential vertex.
\end{lemma}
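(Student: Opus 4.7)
The plan is to deduce the existence of a $\theta$-essential vertex from a differentiation identity together with Lemma \ref{interlacing}. Let $m=m(\theta,G)\ge 1$.

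First I would establish the identity
\[
\mu'(G,x)=\sum_{u\in V(G)}\mu(G\setminus u,x).
\]
This follows by comparing coefficients: the coefficient of $x^{n-1-2k}$ on the left is $(-1)^k(n-2k)p_G(k)$, while on the right it is $(-1)^k\sum_{u}p_{G\setminus u}(k)$, and double counting pairs $(u,M)$ with $M$ a $k$-matching of $G$ missing $u$ gives $\sum_u p_{G\setminus u}(k)=(n-2k)p_G(k)$.

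Next, since $\theta$ is a root of $\mu(G,x)$ of multiplicity exactly $m$, it is a root of $\mu'(G,x)$ of multiplicity exactly $m-1$. By Lemma \ref{interlacing}, every $u\in V(G)$ satisfies $m(\theta,G\setminus u)\ge m-1$, so $(x-\theta)^{m-1}$ divides each summand $\mu(G\setminus u,x)$ on the right-hand side. If in addition every $u$ were $\theta$-neutral or $\theta$-positive, then $(x-\theta)^m$ would divide every summand, hence divide the sum $\mu'(G,x)$, contradicting the fact that the multiplicity of $\theta$ in $\mu'(G,x)$ is exactly $m-1$.

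Therefore at least one vertex $u$ satisfies $m(\theta,G\setminus u)=m-1$, i.e.\ $u$ is $\theta$-essential. The only non-routine step is the derivative identity; once it is in hand the rest is a short divisibility argument, so I do not expect a real obstacle.
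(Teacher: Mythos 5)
Your proof is correct: the derivative identity $\mu'(G,x)=\sum_{u}\mu(G\setminus u,x)$ holds by exactly the double-counting you describe, and the divisibility argument then forces some vertex to be $\theta$-essential. The paper cites this lemma from Godsil without reproducing a proof, and your argument is precisely the standard one from that reference, so there is nothing to add.
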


The well-known Gallai’s Lemma tells that if every vertex of $G$ is $\theta$-essential, then $m(\theta,G)=1$, and it was asked by Godsil \cite{Godsil1} whether it holds for non-zero $\theta$. Ku and Chen verified it, showing the following $\theta$-Gallai's theorem.

\begin{theorem}\label{essential}\cite{KC}
Let $G$ be a connected graph. If every vertex of $G$ is $\theta$-essential, then $m(\theta,G)=1$.
\end{theorem}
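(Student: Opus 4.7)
The plan is to argue by contradiction. Set $k := m(\theta, G)$ and assume $k \geq 2$ together with every vertex of $G$ being $\theta$-essential; the goal is to derive a contradiction. The main tools will be the matching polynomial derivative identity $\mu'(G,x) = \sum_{u \in V(G)} \mu(G\setminus u, x)$, which follows from the definition of $\mu$ combined with the count $\sum_u p_k(G\setminus u) = (n-2k)\,p_k(G)$, and the path-tree Theorem \ref{pathtree}.

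Write $\mu(G, x) = (x-\theta)^k f(x)$ and $\mu(G\setminus u, x) = (x-\theta)^{k-1} g_u(x)$ with $f(\theta), g_u(\theta) \neq 0$ (the latter by the $\theta$-essentiality of $u$). Dividing the derivative identity by $(x-\theta)^{k-1}$ and evaluating at $\theta$ yields $k\,f(\theta) = \sum_u g_u(\theta)$. Now Theorem \ref{pathtree} gives $\mu(G\setminus u, x)/\mu(G, x) = \mu(T\setminus u,x)/\mu(T, x)$ for the path tree $T = T(G, u)$, and since $T$ is a forest with $\mu(T, \cdot) = \phi(T, \cdot)$, the spectral expansion $\mu(T\setminus u, x)/\mu(T, x) = \sum_i \psi_i(u)^2/(x - \lambda_i)$ over an orthonormal eigenbasis of $A(T)$ shows that the residue of $\mu(G\setminus u, x)/\mu(G, x)$ at $\theta$ is a sum of squares; by $\theta$-essentiality this residue $c_u := g_u(\theta)/f(\theta)$ is strictly positive. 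Hence every $g_u(\theta)$ has the same sign as $f(\theta)$ and one gets $k = \sum_u c_u$ with each $c_u > 0$.

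The crux is then to deduce $k = 1$ from this identity, which does not follow from positivity alone. To close the gap I would apply Lemma \ref{existess} inside $G\setminus u$: since $m(\theta, G\setminus u) = k - 1 \geq 1$ there is $v \in V(G)\setminus\{u\}$ with $m(\theta, G\setminus uv) = k - 2$, while $\theta$-essentiality of $v$ in $G$ forces $m(\theta, G\setminus v) = k - 1$. Feeding these four multiplicities into the residue computation above, or into Proposition \ref{rule}(iii) applied at $u$ and at $v$ and then divided by the appropriate power of $(x-\theta)$, should produce arithmetic relations incompatible with every vertex of $G$ remaining $\theta$-essential, either directly or via induction on $n$. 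This structural step, which mirrors the augmenting-path argument behind the classical Gallai lemma for $\theta = 0$, is the main obstacle and the heart of Ku and Chen's proof in \cite{KC}.
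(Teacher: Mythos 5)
There is a genuine gap, and you name it yourself. The machinery you set up is fine: the identity $\mu'(G,x)=\sum_{u}\mu(G\setminus u,x)$ is correct, and via Theorem \ref{pathtree} and the spectral expansion of $\phi(T\setminus u,x)/\phi(T,x)$ the residue $c_u$ of $\mu(G\setminus u,x)/\mu(G,x)$ at $\theta$ is indeed strictly positive when $u$ is $\theta$-essential. But the conclusion of all this, $k=\sum_u c_u$ with $c_u>0$, is nothing more than the logarithmic-derivative identity $\mu'(G,x)/\mu(G,x)=\sum_u \mu(G\setminus u,x)/\mu(G,x)$ read off at the pole $\theta$; it holds for every $k\ge 1$ and carries no contradiction with $k\ge 2$. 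So the entire burden of the theorem rests on the step you describe as ``the crux,'' and that step is not carried out. Your proposed repair does not work as stated: Lemma \ref{existess} applied to $G\setminus u$ produces a vertex $v$ that is $\theta$-essential in $G\setminus u$, but $v$ need not be a neighbour of $u$, so Proposition \ref{rule}(iii) at $u$ does not see the graph $G\setminus uv$; and the four multiplicities $m(\theta,G)=k$, $m(\theta,G\setminus u)=m(\theta,G\setminus v)=k-1$, $m(\theta,G\setminus uv)=k-2$ are mutually consistent (they merely say each of $u,v$ is essential in the graph with the other deleted), so no ``arithmetic relations incompatible'' with essentiality follow from them, directly or by an induction you have not set up.

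For comparison, the paper does not prove this statement at all: it is quoted from Ku and Chen \cite{KC}, whose argument is a genuinely structural one (an analysis of how the essential/neutral/positive classification behaves under vertex deletion, in the spirit of Lemma \ref{neutral}, combined with Godsil's path-tree and divisibility results), not a residue-positivity computation. Acknowledging that the missing step ``is the heart of Ku and Chen's proof'' is honest, but it means the proposal is a plan plus a correct warm-up identity, not a proof of the theorem.
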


The definition of $\theta$-critical graph was first proposed by Godsil \cite{Godsil}, which is defined as the graph with all $\theta$-essential vertices and $m(\theta,G)=1$. By Theorem \ref{essential}, it can be simplified as follows.

\begin{definition}
Let $G$ be a graph with $\theta$ a root of $\mu(G,x)$. $G$ is said to be $\theta$-critical if each vertex of $G$ is $\theta$-essential.
\end{definition}

A further classification of vertices plays an important role in the Gallai-Edmonds
Decomposition of a graph \cite{KC}.

\begin{definition}
A vertex $u$ is said to be $\theta$-special if it is not $\theta$-essential but has a neighbor that is $\theta$-essential. 
\end{definition}

If a connected graph $G$ that is not $\theta$-critical, then it contains a $\theta$-special vertex. In fact, any $\theta$-special vertex is $\theta$-positive, see \cite[Corollary 4.3]{Godsil1}.

\begin{corollary}\label{pos}
Let $G$ be a connected graph with $\theta$ being a root of $\mu(G,x)$. If $G$ is not $\theta$-critical, then there is a $\theta$-positive vertex in $G$.
\end{corollary}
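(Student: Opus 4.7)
The plan is to combine three ingredients already recorded in the excerpt: Lemma~\ref{existess} guarantees a $\theta$-essential vertex exists, the assumption that $G$ is not $\theta$-critical guarantees a non-$\theta$-essential vertex exists, and the cited fact (from \cite[Corollary 4.3]{Godsil1}, quoted in the paragraph preceding the statement) that every $\theta$-special vertex is $\theta$-positive. The only real work is to manufacture a $\theta$-special vertex out of connectivity.

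First I would apply Lemma~\ref{existess} to produce a $\theta$-essential vertex $v$, and use the hypothesis that $G$ is not $\theta$-critical to pick a vertex $w$ that is not $\theta$-essential. Since $G$ is connected, there is a path $v=u_0,u_1,\ldots,u_k=w$ in $G$. Walking along this path, let $i$ be the smallest index with $u_i$ not $\theta$-essential; such $i$ exists because $u_k=w$ is not $\theta$-essential, and $i\ge 1$ because $u_0=v$ is. Then $u_i$ is not $\theta$-essential, but its neighbor $u_{i-1}$ on the path is, so by definition $u_i$ is $\theta$-special.

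Finally I would invoke the cited consequence \cite[Corollary 4.3]{Godsil1} that every $\theta$-special vertex is $\theta$-positive, concluding that $u_i$ is a $\theta$-positive vertex of $G$. The main (and really the only) obstacle is realizing that the cited fact does all the heavy lifting; the rest is an elementary connectivity argument, so the proof should be a few lines.
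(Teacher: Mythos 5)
Your proposal is correct and follows essentially the same route the paper intends: the paper's justification is precisely that a connected non-$\theta$-critical graph contains a $\theta$-special vertex (via Lemma~\ref{existess} and connectivity) and that every $\theta$-special vertex is $\theta$-positive by \cite[Corollary 4.3]{Godsil1}. Your explicit path argument for locating the $\theta$-special vertex just spells out the step the paper leaves implicit.
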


%


The following result is a partial analog to the Edmonds-Gallai structure theorem.

\begin{lemma}\label{neutral}\cite{KC}
Let $\theta$ be a root of $\mu(G,x)$ and $u$ a $\theta$-neutral vertex in $G$. Then \\
(i) if $v$ is $\theta$-essential in $G$ then it is $\theta$-essential in $G\setminus u$;\\
(ii) if $v$ is $\theta$-neutral in $G$ then it is $\theta$-neutral or $\theta$-positive in $G\setminus u$;\\
(iii) if $v$ is $\theta$-positive in $G$ then it is $\theta$--neutral or $\theta$-positive in $G\setminus u$.
\end{lemma}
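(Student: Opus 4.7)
The plan is to combine the interlacing estimates of Lemma \ref{interlacing} with a classical sum-of-squares identity for matching polynomials, which for any two vertices $u,v$ of a graph $G$ reads
\[
\mu(G\setminus u,x)\,\mu(G\setminus v,x)-\mu(G,x)\,\mu(G\setminus\{u,v\},x)=\sum_{P\in\mathcal{P}(u,v)}\mu\bigl(G\setminus V(P),x\bigr)^{2},
\]
where $\mathcal{P}(u,v)$ denotes the set of paths from $u$ to $v$ in $G$ and $V(P)$ is the vertex set of $P$. This identity follows from repeated application of Proposition \ref{rule} and is well known in the matching-polynomial literature; I would record it as a preliminary lemma and then use it as the workhorse.

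Set $k=m(\theta,G)$; by hypothesis $m(\theta,G\setminus u)=k$. Write $k_v=m(\theta,G\setminus v)$ and $k_{uv}=m(\theta,G\setminus\{u,v\})$. Lemma \ref{interlacing} applied to $v$ in $G$ and to $v$ in $G\setminus u$ immediately confines $k_v,k_{uv}\in\{k-1,k,k+1\}$, and the three values of $k_v$ correspond to cases (i), (ii), (iii). The key observation is that the right-hand side of the identity is a sum of squares of polynomials, so its multiplicity of $\theta$ as a root equals $2\min_{P}m(\theta,G\setminus V(P))$ and is in particular even. On the left-hand side, the two products carry $\theta$-multiplicities $k+k_v$ and $k+k_{uv}$, so if these two numbers differ, the multiplicity of the left-hand side equals $k+\min\{k_v,k_{uv}\}$, which must therefore be even.

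The three parts now follow from a short parity check. In (i), $k_v=k-1$; any $k_{uv}\geq k$ would give $\min\{k_v,k_{uv}\}=k-1$, forcing $k+\min=2k-1$ to be even, which is absurd, so $k_{uv}\leq k-1$, and combined with the interlacing bound $k_{uv}\geq k-1$ we obtain $k_{uv}=k-1$, meaning $v$ is $\theta$-essential in $G\setminus u$. In (ii), $k_v=k$; the same parity argument rules out $k_{uv}=k-1$, so $k_{uv}\in\{k,k+1\}$ and $v$ is $\theta$-neutral or $\theta$-positive in $G\setminus u$. In (iii), $k_v=k+1$; Lemma \ref{interlacing} applied in the opposite order (removing $u$ from $G\setminus v$) also forces $k_{uv}\in\{k,k+1,k+2\}$, and intersecting with $\{k-1,k,k+1\}$ yields $k_{uv}\in\{k,k+1\}$.

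The main obstacle is the sum-of-squares identity itself, which is not in the excerpt; the cleanest route seems to be to prove it by induction on $|V(G)|$ using the vertex- and edge-deletion recurrences of Proposition \ref{rule}. Once that is in place, the rest is bookkeeping: the parity of the $\theta$-multiplicity of the right-hand side supplies exactly the one extra ingredient beyond interlacing needed to eliminate the unwanted values of $k_{uv}$.
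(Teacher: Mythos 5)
Your argument is correct. Keep in mind that the paper does not prove this lemma at all---it is quoted from Ku and Chen \cite{KC}---so there is no internal proof to compare against; what you have written is essentially the standard argument from that literature: two-sided interlacing (Lemma \ref{interlacing}) combined with Godsil's path-sum identity
\[
\mu(G\setminus u,x)\,\mu(G\setminus v,x)-\mu(G,x)\,\mu(G\setminus\{u,v\},x)=\sum_{P\in\mathcal{P}(u,v)}\mu\bigl(G\setminus V(P),x\bigr)^{2},
\]
whose sum-of-squares form supplies exactly the parity obstruction you exploit. Two remarks. First, this identity need not be re-derived by induction: it is a known result recorded in the paper's reference \cite{Godsil}, so a citation suffices. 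Second, your parity step is applied only when the two products on the left have different $\theta$-multiplicities; it is worth saying explicitly that in that situation the left-hand side is not identically zero, hence the path sum is a nonzero polynomial and its $\theta$-multiplicity is indeed the even number $2\min_{P}m(\theta,G\setminus V(P))$, because the squares cannot cancel at the minimal order (and when the multiplicities coincide, $k_{uv}=k_v$ already gives the desired value in cases (i) and (ii)). With these observations your case analysis---parity forcing $k_{uv}=k-1$ in (i), parity excluding $k_{uv}=k-1$ in (ii), and interlacing applied from both $G\setminus u$ and $G\setminus v$ in (iii)---is complete and correct.
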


\section{Proof of Theorem \ref{bound}}

In this section, we prove Theorem \ref{bound} by distinguishing whether $n$ is odd or even, based on Theorems \ref{upbound} and \ref{upbound2}. For Theorem \ref{upbound}, we provide an upper bound on $m(\theta,G)$ for all $n$-vertex graphs $G$, regardless of the parity of $n$. Note that this bound is attained only when $n$ is odd. Consequently, we proceed to consider the case when $n$ is even, which is addressed in Theorem \ref{upbound2}.

Let $n\ge 7$ be an integer. We define a set $\mathcal{F}_n$ of graphs as follows.
If $n$ is odd, then $\mathcal{F}_{n}$ is the set of graphs obtained from $K_1\cup \frac{n-1}{2}K_2$ by adding at least one edge between the vertex of $K_1$ and vertices of each $K_2$. 
If $n$ is even, then $\mathcal{F}_{n}$ is the set of graphs obtained from $\frac{n}{2}K_2$ by adding at least one edge between $w$ and vertices of each $K_2$ not containing $w$, where $w$ is a vertex in some $K_2$. 

\begin{theorem}\label{upbound}
Let $G$ be a connected graph of order $n\ge 7$.
If $\theta\ne 0$, then $m(\theta,G)\le \frac{n-3}{2}$ with equality if and only if $n$ is odd, $\theta=\pm 1$ and $G\in \mathcal{F}_{n}$.
\end{theorem}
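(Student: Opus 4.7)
I would combine a root-counting argument with a structural analysis at a $\theta$-positive vertex. The key inequality is: for a connected graph $H$ and a non-zero $\theta\ne\pm\lambda_1(H)$ (where $\lambda_1(H)$ denotes the spectral radius of $\mu(H,x)$), one has $m(\theta,H)\le\nu(H)-1$, where $\nu(H)$ is the matching number; more generally, $m(\theta,H)\le\nu(H)$ holds for every (possibly disconnected) graph $H$ and non-zero $\theta$. The first inequality follows because Lemma \ref{simple} and Proposition \ref{symmetric} make $\pm\lambda_1(H)$ simple roots, so the pairs $\pm\lambda_1(H),\pm\theta$ account for at least $2+2m(\theta,H)$ of the $2\nu(H)$ non-zero roots of $\mu(H,x)$; the general bound follows from the symmetry of non-zero roots (Proposition \ref{symmetric}) and the identity that the number of non-zero roots with multiplicity equals $2\nu(H)$.

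\textbf{Upper bound.} If $\theta=\pm\lambda_1(G)$, then $m(\theta,G)=1\le(n-3)/2$ for $n\ge 7$. Otherwise, $m(\theta,G)\le\nu(G)-1\le\lfloor n/2\rfloor-1$, which is already $(n-3)/2$ when $n$ is odd. For $n$ even this only yields $(n-2)/2$, so suppose for contradiction that $m(\theta,G)=(n-2)/2$. Since this is $\ge 2$, $G$ is not $\theta$-critical, so Corollary \ref{pos} supplies a $\theta$-positive vertex $v$; by Lemma \ref{interlacing}, $m(\theta,G\setminus v)=n/2$. Writing $G\setminus v=H_1\cup\cdots\cup H_s$ with $|V(H_i)|=n_i$ and applying the universal bound componentwise,
\[
\frac{n}{2}=\sum_{i=1}^s m(\theta,H_i)\le\sum_{i=1}^s\Big\lfloor\frac{n_i}{2}\Big\rfloor\le\Big\lfloor\frac{n-1}{2}\Big\rfloor=\frac{n-2}{2},
\]
a contradiction.

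\textbf{Equality characterization.} Assuming $m(\theta,G)=(n-3)/2$, the above forces $n$ odd (since the even case yields the strict inequality $m(\theta,G)\le(n-4)/2<(n-3)/2$), $\theta\ne\pm\lambda_1(G)$, and $\nu(G)=(n-1)/2$. Pick a $\theta$-positive vertex $v$ (again via Corollary \ref{pos}); then $m(\theta,G\setminus v)=(n-1)/2$. With the same decomposition,
\[
\frac{n-1}{2}=\sum_{i=1}^s m(\theta,H_i)\le\sum_{i=1}^s\nu(H_i)\le\sum_{i=1}^s\frac{n_i}{2}=\frac{n-1}{2},
\]
which forces every $n_i$ even, every $\nu(H_i)=n_i/2$, and $m(\theta,H_i)=\nu(H_i)$ for each $i$. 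In a connected component $H_i$, equality $m(\theta,H_i)=\nu(H_i)$ forces $\theta=\pm\lambda_1(H_i)$ by the key inequality, hence $m(\theta,H_i)=1$ and $n_i=2$, so $H_i\cong K_2$ with $\theta=\pm 1$. Therefore $G\setminus v\cong\tfrac{n-1}{2}K_2$, and since $G$ is connected, $v$ is adjacent to at least one vertex in each $K_2$, placing $G\in\mathcal{F}_n$.

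\textbf{Main obstacle.} The delicate step is the $n$ even case: the root-counting bound $\nu(G)-1$ falls short of $(n-3)/2$ by one, and it is bridged by the parity mismatch $|V(G\setminus v)|=n-1$ odd, which prevents the components from simultaneously all having even order and saturating the matching-number bound. The same component inspection, run in reverse, then pins down the equality case: the existence of a $\theta$-positive $v$ with $G\setminus v$ a disjoint union of $K_2$'s forces both $\theta=\pm 1$ and $G\in\mathcal{F}_n$.
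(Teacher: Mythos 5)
Your upper-bound argument and your derivation of the necessity of the conditions ($n$ odd, $\theta=\pm 1$, $G\in\mathcal{F}_n$) are correct, and in substance they follow the same route as the paper: a count of non-zero roots (you organize it through the identity that the non-zero roots number $2\nu(H)$ and the simplicity of $\pm\lambda_1$, the paper through Proposition \ref{symmetric} and Lemma \ref{simple}), then Corollary \ref{pos} to produce a $\theta$-positive vertex, then a component analysis of $G\setminus v$ that forces every component to be $K_2$. One small caveat: your ``key inequality'' $m(\theta,H)\le\nu(H)-1$ for connected $H$ with $\theta\ne\pm\lambda_1(H)$ is false for $H=K_1$ (there $\nu=0$), so it should be stated for connected graphs containing an edge; this is harmless where you use it, since $G$ itself has $n\ge 7$ vertices and the components to which you apply it have $\nu(H_i)=n_i/2\ge 1$.

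The genuine gap is that you prove only half of the ``if and only if'': you show that equality forces $n$ odd, $\theta=\pm 1$ and $G\in\mathcal{F}_n$, but you never verify that every graph in $\mathcal{F}_n$ with odd $n$ actually attains $m(\pm 1,G)=\frac{n-3}{2}$. This needs an argument: $\mathcal{F}_n$ allows arbitrarily many edges from the dominating vertex $v$ into each $K_2$, and one must check both that $1$ is a root at all and that its multiplicity is not smaller than $\frac{n-3}{2}$ (that it is not larger follows from the bound you proved). The paper closes this by deleting $v$: since $G\setminus v\cong\frac{n-1}{2}K_2$, we have $m(\pm 1,G\setminus v)=\frac{n-1}{2}$, so Lemma \ref{interlacing} gives $m(\pm 1,G)\ge\frac{n-3}{2}\ge 2$, and combining with the upper bound (the paper instead combines with the simplicity of $\pm\rho$ and $m(0,G)=1$) yields equality. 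Adding this short verification completes your proof.
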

\begin{proof}
Assume that $\theta>0$ as $m(-\theta,G)=m(\theta,G)$ by Proposition~\ref{symmetric}. 
If $\theta$ is the largest root, then $m(\theta,G)=1$ by Lemma~\ref{simple}, as desired. Assume that $\theta$ is not the largest root. Let $\rho$ denote the largest root of $\mu(G,x)$. We have by Lemma~\ref{simple} and Proposition~\ref{symmetric} that $\rho$ is simple and $-\rho$ is also a simple root of $\mu(G,x)$. So $2m(\theta,G)=m(\theta,G)+m(-\theta,G)\le n-2$.

If $m(\theta,G)=\frac{n-2}{2}$, then as $n\ge 7$, $m(\theta,G)\ge 2$ and so by Corollary \ref{pos}, there is a $\theta$-positive vertex $u$ in $G$.  It then follows from Lemma~\ref{interlacing} and Proposition~\ref{symmetric} that $m(\theta,G\setminus u)=\frac{n}{2}$ and $m(-\theta,G\setminus u)=\frac{n}{2}$, a contradiction. This proves that $m(\theta,G)\le \frac{n-3}{2}$.

Suppose that $m(\theta,G)= \frac{n-3}{2}$. Then $n$ is odd. By Corollary \ref{pos}, $G$ contains a $\theta$-positive vertex $u$, that is, $m(\theta,G\setminus u)=\frac{n-1}{2}$. By Proposition~\ref{symmetric}, $m(-\theta,G)=\frac{n-1}{2}$. This shows that $\theta$ and $-\theta$ are roots of $\mu(G\setminus u,x)$ with multiplicity $\frac{n-1}{2}$. By Lemma~\ref{simple} and Proposition~\ref{symmetric}, $G\setminus u$ contains exactly $\frac{n-1}{2}$ components, each component has $\theta$ and $-\theta$ as roots of its matching polynomial, which shows that each component is $K_2$ and $\theta=\pm 1$. As $G$ is connected, $G\in \mathcal{F}_{n}$. 

It remains to show that each graph in $\mathcal{F}_{n}$ with odd $n$ has $\pm 1$ as a root of its matching polynomial with multiplicity exactly $\frac{n-3}{2}$.
Let $H$ be a graph in $\mathcal{F}_{n}$ and $v$ the vertex of $H$ of degree at least $\frac{n-1}{2}$. Note that $H\setminus v\cong \frac{n-1}{2}K_2$ and $\mu(\frac{n-1}{2}K_2,x)$ has roots $1$ and $-1$ with multiplicity exactly $\frac{n-1}{2}$. We have by Lemma~\ref{interlacing} that $m(\pm1,H)\ge \frac{n-3}{2}$. As $n$ is odd and $H$ is connected, $m(0,H)=1$ and the largest root $\rho$ of $\mu(H,x)$ is simple with $\rho> 1$ by Lemma \ref{simple}. By Proposition~\ref{symmetric}, $-\rho$ is also a simple root of $\mu(H,x)$. Therefore, $m(\pm1,H)=\frac{n-3}{2}$. 
\end{proof}

\begin{corollary}
Let $n\ge 8$. For $\theta\ne 0$, there is no $2$-connected graph $G$ with $m(G,\theta)=\frac{n-5}{2}$.
\end{corollary}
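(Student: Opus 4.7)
The plan is to proceed by contradiction, exploiting Theorem~\ref{upbound} one level deeper. Suppose $G$ is a $2$-connected graph of order $n\ge 8$ with $m(\theta,G)=\frac{n-5}{2}$ for some $\theta\ne 0$. Since $m(\theta,G)$ must be a non-negative integer, $n$ is forced to be odd, hence $n\ge 9$ and $m(\theta,G)\ge 2$. In particular $\theta$ is a root of $\mu(G,x)$, and $G$ cannot be $\theta$-critical, since Theorem~\ref{essential} would then force $m(\theta,G)=1$.

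By Corollary~\ref{pos} there is a $\theta$-positive vertex $u\in V(G)$, and then
\[
m(\theta,G\setminus u)=m(\theta,G)+1=\frac{n-3}{2}.
\]
Because $G$ is $2$-connected, $G\setminus u$ is connected of order $n-1\ge 8\ge 7$, so Theorem~\ref{upbound} applies to $G\setminus u$ and gives
\[
m(\theta,G\setminus u)\le\frac{(n-1)-3}{2}=\frac{n-4}{2}.
\]
Since $n$ is odd, $\frac{n-4}{2}$ is not an integer, so the integer $m(\theta,G\setminus u)$ is in fact at most $\frac{n-5}{2}$. This contradicts the equality $m(\theta,G\setminus u)=\frac{n-3}{2}$ established above.

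There is no substantial obstacle: the argument uses only Theorem~\ref{upbound}, Theorem~\ref{essential}, and Corollary~\ref{pos}. The only point of care is the parity squeeze—the bound of Theorem~\ref{upbound} gains a $\tfrac12$ improvement when applied to the even-order graph $G\setminus u$—together with noting that $m(\theta,G)\ge 2$ already rules out $G$ being $\theta$-critical, so that a $\theta$-positive vertex is guaranteed to exist.
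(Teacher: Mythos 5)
Your proposal is correct and follows essentially the same route as the paper: use $2$-connectedness to ensure $G\setminus u$ remains connected after deleting a $\theta$-positive vertex $u$, then apply Theorem~\ref{upbound} to $G\setminus u$ to reach a contradiction. The extra parity discussion is harmless but unnecessary, since $m(\theta,G\setminus u)=\frac{n-3}{2}$ already exceeds the bound $\frac{(n-1)-3}{2}=\frac{n-4}{2}$ outright.
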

\begin{proof}
Assume the contrary, that $G$ is a $2$-connected graph with $m(G,\theta)=\frac{n-5}{2}$. As $n\ge 8$ and $G$ is connected, $m(\theta,G)\ge 2$ and there is a $\theta$-positive vertex $u$ in $G$. Then $G\setminus u$ is connected and $m(\theta,G\setminus u)=\frac{n-3}{2}=\frac{(n-1)-2}{2}$, contradicting to Theorem~\ref{upbound}. 
\end{proof}


In the following, we consider graphs of even order.

\begin{lemma}\label{fn-}
For a graph $G\in \mathcal{F}_n$ with even $n$, $\pm 1$ is a root of $\mu(G,x)$ with multiplicity exactly $\frac{n-4}{2}$. 
\end{lemma}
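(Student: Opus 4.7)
The plan is to mirror the strategy used in the odd-order case of Theorem~\ref{upbound}, exploiting the distinguished vertex $w$ from the definition of $\mathcal{F}_n$ to get a clean interlacing bound and then forcing an additional drop via a parity argument on the degree of $\mu(G\setminus u,x)$.

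First I would identify $G\setminus w$ explicitly. By the definition of $\mathcal{F}_n$ for even $n$, the vertex $w$ lies in one of the copies of $K_2$ and is joined to at least one vertex of every other copy. Deleting $w$ therefore leaves its pendant mate $w'$ isolated, kills all the ``extra'' edges from $w$, and preserves the remaining $K_2$'s intact, so
\[
G\setminus w\cong K_1\cup \tfrac{n-2}{2}K_2.
\]
By Proposition~\ref{rule}(i) this gives $\mu(G\setminus w,x)=x(x^2-1)^{(n-2)/2}$, so $\pm 1$ are roots of $\mu(G\setminus w,x)$ each with multiplicity exactly $\frac{n-2}{2}$. Applying Lemma~\ref{interlacing} at $w$ then yields $m(\pm 1,G)\ge \frac{n-2}{2}-1=\frac{n-4}{2}$.

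For the matching upper bound I would first invoke Lemma~\ref{simple} with $u=w$: the largest root $\rho$ of $\mu(G,x)$ is simple and strictly larger than the largest root $1$ of $\mu(G\setminus w,x)$, so $\rho>1$. By Proposition~\ref{symmetric} the value $-\rho$ is likewise a simple root, which already gives the crude bound $m(\pm 1,G)\le \frac{n-2}{2}$. To shave off the extra unit, I would suppose for contradiction that $m(1,G)=\frac{n-2}{2}$. Since $n\ge 8$ (as $n\ge 7$ is even), this multiplicity is at least $3$, so $G$ is not $1$-critical and Corollary~\ref{pos} supplies a $1$-positive vertex $u$; then $m(1,G\setminus u)=\frac{n}{2}$, and Proposition~\ref{symmetric} gives $m(-1,G\setminus u)=\frac{n}{2}$ as well. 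But $\mu(G\setminus u,x)$ has degree only $n-1$, while these two multiplicities already account for $n$ roots -- a contradiction. Combining the two bounds and using that $\frac{n-3}{2}$ is not an integer yields $m(\pm 1,G)=\frac{n-4}{2}$.

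The main obstacle is this last step: interlacing alone stops one short of the claimed value, and the only way I see to close the gap is to feed a hypothetical $1$-positive vertex back through Proposition~\ref{symmetric} in order to overload the degree of $\mu(G\setminus u,x)$. The rest is a direct computation of $\mu(G\setminus w,x)$ and routine appeals to Lemmas~\ref{simple} and~\ref{interlacing}.
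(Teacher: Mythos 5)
Your proof is correct, but it takes a genuinely different route from the paper. The paper proves Lemma~\ref{fn-} by brute force: it enumerates the matching numbers $p_G(i)$ of a graph in $\mathcal{F}_n$ and factors the polynomial explicitly as $\mu(G,x)=(x^2-1)^{t-1}(x^4-(s+1)x^2+1)$ with $t=\frac{n-2}{2}$ and $s=\deg(w)$, reading off the multiplicity (and, as a bonus, the entire spectrum). You instead combine two soft arguments: the identification $G\setminus w\cong K_1\cup\frac{n-2}{2}K_2$ together with Lemma~\ref{interlacing} gives the lower bound $m(\pm 1,G)\ge\frac{n-4}{2}$, and the simplicity of $\pm\rho$ plus the degree-overload contradiction (a $1$-positive vertex $u$ would force $m(1,G\setminus u)=m(-1,G\setminus u)=\frac{n}{2}$, exceeding $\deg\mu(G\setminus u,x)=n-1$) gives the matching upper bound. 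Both halves are sound, and the parity step $\frac{n-3}{2}\notin\mathbb{Z}$ closes the gap correctly. Your upper-bound argument is in fact a re-derivation of Theorem~\ref{upbound}, which is already established before this lemma in the paper, so you could simply cite it (this is exactly what the first line of the proof of Theorem~\ref{upbound2} does); the genuinely new content of your proof is the interlacing lower bound at $w$. The trade-off: your argument avoids all combinatorial enumeration and generalizes more readily (it is the same pattern used for $\mathcal{H}_\theta^n$ in Section 4), while the paper's computation yields the full factorization of $\mu(G,x)$, which is stronger information than the multiplicity alone.
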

\begin{proof}
Let $t=\frac{n-2}{2}$. Let $v$ denote the vertex of $G$ of degree at least $t$ and $s$ be its degree.
Note that $p_G(0)=1$, $p_G(1)=s+t$. For $i=2,\dots,t$,\[
p_G(i)={t\choose i}+{t\choose i-1}+(s-1){t-1\choose i-1}.
\]
Moreover, $p_G(t+1)=1$. Therefore, \[
\mu(G,x)=\sum_{i=0}^{t+1}(-1)^ip_G(i)x^{2t+2-2i}=(x^2-1)^{t-1}(x^4-(s+1)x^2+1)
\]
and $\pm 1$ are not roots of $x^4-(s+1)x^2+1$. This proves the lemma. 
\end{proof}

\begin{theorem}\label{upbound2}
Let $G$ be a connected graph of even order $n\ge 8$. If $\theta\ne 0$, then $m(\theta,G)\le \frac{n-4}{2}$ with equality if and only if $\theta=\pm 1$ and $G\in \mathcal{F}_{n}$. 
\end{theorem}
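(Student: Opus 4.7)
The plan is to deduce the upper bound immediately from Theorem~\ref{upbound}, and to attack the equality case by extracting a $\theta$-positive vertex $u$ and dissecting the component structure of $G\setminus u$.

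Since $n$ is even, $\frac{n-3}{2}$ is not an integer, so the bound $m(\theta,G)\le\frac{n-3}{2}$ from Theorem~\ref{upbound} forces the integer $m(\theta,G)$ to be at most $\frac{n-4}{2}$; sufficiency of the equality statement is already Lemma~\ref{fn-}. For necessity, suppose $m(\theta,G)=\frac{n-4}{2}$; by Proposition~\ref{symmetric} I may assume $\theta>0$. Since $n\ge 8$ gives $m(\theta,G)\ge 2$, Corollary~\ref{pos} supplies a $\theta$-positive vertex $u$, so $m(\theta,G\setminus u)=\frac{n-2}{2}$ while $|V(G\setminus u)|=n-1$. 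Decompose $G\setminus u$ into connected components $G_1,\dots,G_k$ with $|V(G_i)|=n_i$ and $m_i=m(\theta,G_i)$; then $\sum n_i=n-1$ and $\sum m_i=\frac{n-2}{2}$, so the total \emph{deficit} is $\sum(n_i-2m_i)=1$.

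The core step is a componentwise bound on $d_i:=n_i-2m_i\ge 0$. By Lemma~\ref{simple} and Proposition~\ref{symmetric}, both the largest and smallest roots of $\mu(G_i,x)$ are simple, so if $\theta\ne 0$ is a root of $\mu(G_i,x)$ then either $m_i=1$ (when $\theta$ is extreme) or $m_i\le\frac{n_i-2}{2}$, and $m_i=0$ if $\theta$ is not a root. A short case check (applying Theorem~\ref{upbound} itself for $n_i\ge 7$) gives: $d_i=0$ iff $G_i=K_2$ and $\theta=\pm 1$; $d_i=1$ only when $G_i=K_1$, or when $G_i\in\{P_3,K_3\}$ with $\theta\in\{\pm\sqrt 2,\pm\sqrt 3\}$; and $d_i\ge 2$ in all other cases. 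Since $\sum d_i=1$, exactly one component is exceptional and all others are $K_2$'s with $\theta=\pm 1$; this forces $\theta=\pm 1$, ruling out the $P_3$ and $K_3$ options, and $k=1$ is excluded because $n_1=n-1\ge 7$ would give $d_1\ge 2$. Hence $G\setminus u=K_1\cup\frac{n-2}{2}K_2$ and $\theta=1$. Connectedness of $G$ forces $u$ to be adjacent to the isolated vertex $w^{\ast}$ and to at least one endpoint of every $K_2$; identifying $\{u,w^{\ast}\}$ as the distinguished $K_2$ with $u$ as the distinguished vertex in the definition of $\mathcal{F}_n$ then puts $G\in\mathcal{F}_n$.

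The main obstacle is the small-component deficit analysis, in particular verifying that no connected $G_i$ with $2\le n_i\le 6$ can sit alongside the $K_2$ components and still yield total deficit~$1$. This needs both the general bound $m_i\le\frac{n_i-2}{2}$ coming from the simple extremal roots and explicit knowledge of $\mu(P_3,x)$ and $\mu(K_3,x)$, and it crucially exploits the mismatch between the value $\theta=\pm 1$ demanded by any $K_2$ component and the values $\pm\sqrt 2,\pm\sqrt 3$ that a hypothetical $P_3$ or $K_3$ exceptional component would demand.
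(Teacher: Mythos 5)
Your proof is correct and follows essentially the same route as the paper: both extract a $\theta$-positive vertex $u$, use Lemma~\ref{simple} and Proposition~\ref{symmetric} to pin down the components of $G\setminus u$ as $\frac{n-2}{2}$ copies of $K_2$ plus one $K_1$ (ruling out a $P_3$ or $K_3$ odd component by the clash between $\theta=\pm 1$ and $\pm\sqrt{2},\pm\sqrt{3}$), and invoke Lemma~\ref{fn-} for sufficiency. Your per-component deficit $d_i=n_i-2m_i$ is just a local repackaging of the paper's global count that $\pm\theta$ and $0$ exhaust all $n-1$ roots of $\mu(G\setminus u,x)$.
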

\begin{proof}
By Theorem~\ref{upbound} and the fact that $n$ is even, $m(\theta,G)\le \frac{n-4}{2}$. 

Suppose that $m(\theta,G)=\frac{n-4}{2}$. By Corollary~\ref{pos}, $G$ contains a $\theta$-positive vertex $u$, that is, $m(\theta,G\setminus u)=\frac{n-2}{2}$. As $n$ is even, $0$ is a root of $\mu(G\setminus u,x)$ with multiplicity one. Thus, $\pm\theta$ and $0$ are all roots of $\mu(G\setminus u,x)$. By Lemma~\ref{simple}, $G\setminus u$ contains at least $\frac{n-2}{2}$ components. As the multiplicity of root $0$ is one, there is exactly one odd component in $G\setminus u$. Hence, $G\setminus u$ contains at least $\frac{n-4}{2}$ components isomorphic to $K_2$, which implies that $\theta=\pm 1$. If the odd component contains three vertices, then it is isomorphic to either $P_3$ or $K_3$, which would imply that $\theta=\pm \sqrt{2}$ or $\theta=\pm \sqrt{3}$, a contradiction. Therefore, the odd component is exactly an isolated vertex. We conclude that $G\setminus u\cong \frac{n-2}{2}K_2\cup K_1$, and hence $G\in \mathcal{F}_{n}$.

Combining the above arguments, the result follows from Lemma \ref{fn-}.
\end{proof}

For odd $n$, let $T_n$ be the graph obtained from $K_1\cup \frac{n-1}{2}K_2$ by adding exactly one edge between the vertex of $K_1$ and a vertex of each $K_2$. For even $n$, let $T_n$ be the graph obtained from $\frac{n}{2}K_2$ by adding exactly one edge between $w$ and vertices of each $K_2$ not containing $w$, where $w$ is a vertex in some $K_2$. Obviously, $T_n$ is the only tree in $\mathcal{F}_n$. This leads to the following corollary.

\begin{corollary}
Let $T$ be a tree of order $n\ge 6$. If $\theta\ne 0$, then $m(\theta,T)\le\lfloor \frac{n-3}{2}\rfloor$ with equality if and only if $\theta=\pm 1$ and $T\cong T_n$.
\end{corollary}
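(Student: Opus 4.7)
The plan is to reduce the statement to Theorem~\ref{bound} by identifying precisely which members of $\mathcal{F}_n$ are trees. Since every tree is connected, for $n\ge 7$ Theorem~\ref{bound} immediately gives the bound $m(\theta,T)\le\lfloor\frac{n-3}{2}\rfloor$ together with the equality condition $\theta=\pm 1$ and $T\in\mathcal{F}_n$. The only remaining task is to show that the unique tree in $\mathcal{F}_n$ is $T_n$.

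I would verify this by a simple edge count. A graph in $\mathcal{F}_n$ consists of $\lfloor n/2\rfloor$ disjoint matching edges (plus an isolated vertex when $n$ is odd), together with a set of edges joining a distinguished vertex $w$ to each of the other $\lceil n/2\rceil-1$ copies of $K_2$. For such a graph to be a tree it must carry exactly $n-1$ edges, so the number of added edges equals $n-1-\lfloor n/2\rfloor=\lceil n/2\rceil-1$; since each of the $\lceil n/2\rceil-1$ remaining $K_2$'s already needs at least one added edge incident to $w$, this count forces exactly one added edge per $K_2$. The resulting graph is precisely $T_n$ as defined just before the corollary. Substituting this back into the equality case of Theorem~\ref{bound} yields the characterization $T\cong T_n$.

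Finally, the boundary value $n=6$ is not directly covered by Theorem~\ref{upbound2} (which requires $n\ge 8$), so it needs to be treated separately. Here $\lfloor\frac{n-3}{2}\rfloor=1$, so it suffices to verify $m(\theta,T)\le 1$ for non-zero $\theta$ across the (few) isomorphism classes of trees of order $6$; the matching polynomials can be written down via Proposition~\ref{rule} and their roots read off directly, confirming both the bound and that $T_6$ is the only tree of order $6$ attaining it at $\theta=\pm 1$. The main substantive step in the whole proof is thus the elementary edge count in the second paragraph, with the rest being bookkeeping on top of Theorem~\ref{bound}.
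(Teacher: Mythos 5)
Your reduction for $n\ge 7$ is exactly the paper's (unstated) argument: Theorem \ref{bound} supplies the bound together with the equality condition $\theta=\pm1$ and $T\in\mathcal{F}_n$, and your edge count correctly isolates $T_n$ as the unique tree in $\mathcal{F}_n$ --- the base graph already carries $\lfloor n/2\rfloor$ edges, a tree needs $n-1$, and the $\lceil n/2\rceil-1$ copies of $K_2$ that must be joined to the centre each require at least one added edge, forcing exactly one apiece. For $n\ge 7$ the proposal is complete and matches the paper.

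The step that fails is the $n=6$ verification in your last paragraph. The bound itself does hold there (for $\theta\ne 0$ one has $n_\theta\ge 2$, so by Lemma \ref{order} a nonzero root of multiplicity at least $2$ forces $n\ge 3n_\theta+1\ge 7$), but the equality characterization is false at $n=6$: since $\lfloor\frac{6-3}{2}\rfloor=1$, equality just means $\theta$ is a nonzero root, and every tree of order $6$ has nonzero matching roots. Concretely, $\mu(P_6,x)=x^6-5x^4+6x^2-1$ has six simple nonzero roots, none equal to $\pm1$, so $m(\theta,P_6)=1$ for $\theta=2\cos(\pi/7)$; and even restricting to $\theta=\pm1$, the graph $W_6$ of Proposition \ref{wn} satisfies $\mu(W_6,x)=x^2(x^2-1)(x^2-4)$, so $W_6\not\cong T_6$ also attains $m(1,W_6)=1$. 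The case check you propose would therefore refute, not confirm, the statement at $n=6$; the corollary needs the hypothesis $n\ge 7$ (or the equality clause must be dropped when $n=6$). The paper passes over this silently, so the defect lies in the statement as much as in your write-up, but as written your final paragraph asserts something that cannot be verified because it is not true.
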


\section{Graphs containing $\theta$ as a matching polynomial root}

This section begins with an investigation of $\theta$-critical graphs. We then turn to consider graphs that are not $\theta$-critical, from which we derive the maximum multiplicity of $\theta$ as a matching polynomial root. Finally, drawing inspiration from the extremal graphs, we provide an answer to Godsil's question \cite[Section 6]{Godsil1}.

We begin with a basic result, which follows immediately by Lemma~\ref{existess}.

\begin{proposition}\label{ess}
If $G$ is a connected graph with $m(\theta,G)\ge 2$, then there is a graph $G'$ with $|V(G')|<|V(G)|$ such that $m(\theta,G')=1$.
\end{proposition}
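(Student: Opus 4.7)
The plan is to argue by iterated removal of $\theta$-essential vertices. Let $G_0 = G$, and inductively define $G_{i+1} = G_i \setminus u_i$, where $u_i$ is a $\theta$-essential vertex of $G_i$. Such a vertex exists at each stage by Lemma~\ref{existess}, provided $\theta$ is a root of $\mu(G_i,x)$, that is, $m(\theta,G_i)\ge 1$. By the definition of $\theta$-essential, $m(\theta,G_{i+1}) = m(\theta,G_i) - 1$, so the multiplicity strictly decreases at each step. Starting from $m(\theta,G_0)\ge 2$, after exactly $m(\theta,G)-1 \ge 1$ steps we reach a graph $G' = G_{m(\theta,G)-1}$ with $m(\theta,G')=1$, and $|V(G')| = |V(G)| - (m(\theta,G)-1) < |V(G)|$.

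The only subtlety is that $G_i$ may become disconnected after some removal, so one should confirm that Lemma~\ref{existess} is applicable in that setting. This is harmless: by Proposition~\ref{rule}(i) the matching polynomial of a disjoint union factorises, so if $\theta$ is a root of $\mu(G_i,x)$ then $\theta$ is a root of $\mu(C,x)$ for some connected component $C$ of $G_i$; applying Lemma~\ref{existess} to $C$ yields a vertex $u_i\in V(C)$ that is $\theta$-essential in $C$, and since removing $u_i$ does not affect the matching polynomials of the other components, $u_i$ is $\theta$-essential in $G_i$ as well. This is the one place where a little care is needed, but it is not a genuine obstacle. No finer structural information about $\theta$-critical graphs is required for this proposition; all the work is done by Lemma~\ref{existess} together with interlacing.
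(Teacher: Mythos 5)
Your proof is correct and follows essentially the same route as the paper, which states that the proposition "follows immediately by Lemma~\ref{existess}"; you have simply spelled out the iterated deletion of $\theta$-essential vertices and handled the (harmless) issue of disconnection via the factorisation in Proposition~\ref{rule}(i). Nothing further is needed.
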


\subsection{$\theta$-critical graphs}

For any $\theta$, let \[
\mathcal{H}_\theta=\{G \mbox{ is a connected graph}:m(\theta,G)=1\}
\]
$n_\theta$ be the minimum order among all graphs in $\mathcal{H}_\theta$, and \[
\mathcal{H}_\theta'=\{G\in \mathcal{H}_\theta:|V(G)|=n_\theta\}.
\] 
By Proposition~\ref{symmetric}, $\mathcal{H}_{-\theta}=\mathcal{H}_\theta$, $n_{-\theta}=n_\theta$ and $\mathcal{H}_\theta'=\mathcal{H}_{-\theta}'$. Particularly, $\mathcal{H}_0$ is the set of all factor-critical graphs, $n_0=1$ and $\mathcal{H}_0'=\{K_1\}$. We present some examples of $n_\theta$ for non-zero $\theta$. 
If $\theta=\pm 1$, then $n_\theta=2$ and $\mathcal{H}_\theta'=\{K_2\}$. This implies that $n_\theta\ge 3$ if $\theta\ne 0,\pm 1$. 
For $\theta=\pm\sqrt{2}$ and $\theta=\pm \sqrt{3}$, we have $n_\theta=3$ with  $\mathcal{H}_\theta'=\{K_{1,2}\}$ and $\{K_3\}$, respectively. It is worth noting that different values of $\theta$ may yield the same $n_\theta$, and the $\theta$-critical graph of order $n_\theta$ is not necessarily unique.
For example, the graphs $H_1$ and $H_2$ in Fig.~\ref{fig} have the same matching polynomial $\mu(H_1,x)=\mu(H_2,x)=x^5-5x^3+4x$. This polynomial has roots $\pm \sqrt{\frac{5+\sqrt{5}}{2}}$, $\pm\sqrt{\frac{5-\sqrt{5}}{2}}$ and $0$. It can be easily checked that $H_1,H_2\in \mathcal{H}_\theta'$ with $\theta=\sqrt{\frac{5+\sqrt{5}}{2}}$. It's also worth mentioning that these two graphs form the smallest pair of connected graphs with the same matching polynomial \cite{Gutman}.

\begin{figure}[htbp]
\centering
\begin{minipage}{.495\linewidth}
\centering
\begin{tikzpicture}
\filldraw [black] (0,0) circle (2pt);
\filldraw [black] (1,0) circle (2pt);
\filldraw [black] (2,0) circle (2pt);
\draw  [black](0,0)--(-0.75,0.75)--(-1.5,0)--(0,0)--(1,0)--(2,0);
\filldraw [black] (-0.75,0.75) circle (2pt);
\filldraw [black] (-1.5,0) circle (2pt);
\node at (0,-0.5) {$H_1$};
\end{tikzpicture}
\end{minipage}
\begin{minipage}{.495\linewidth}
\centering
\begin{tikzpicture}
\filldraw [black] (0,0) circle (2pt);
\filldraw [black] (1,0) circle (2pt);
\filldraw [black] (-1,0) circle (2pt);
\draw  [black](1,0)--(0,0)--(-1,0)--(-1,1)--(0,1)--(0,0);
\filldraw [black] (-1,1) circle (2pt);
\filldraw [black] (0,1) circle (2pt);
\node at (0,-0.5) {$H_2$};
\end{tikzpicture}
\end{minipage}
\caption{The graph $H_1$ and $H_2$.}
\label{fig}
\end{figure}
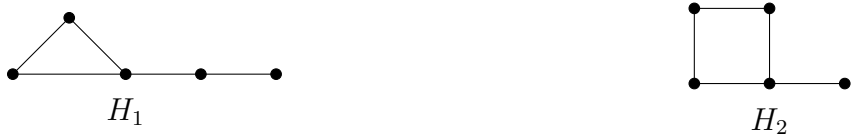

We state two straightforward results as follow. The first result is derived from Proposition~\ref{rule}~(ii), whereas the second relies on the definition of $\theta$-critical graph.

\begin{proposition}
For $G\in\mathcal{H}_\theta'$, $G-e\notin \mathcal{H}_\theta'$ for any $e\in E(G)$.
\end{proposition}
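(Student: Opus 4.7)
The plan is to argue by contradiction. Suppose for some edge $e = uv$ we have $G - e \in \mathcal{H}_\theta'$. Since deleting an edge does not alter the vertex set, $|V(G-e)| = n_\theta$, and the assumption $G-e \in \mathcal{H}_\theta'$ forces $G-e$ to be connected with $m(\theta, G-e) = 1$; in particular, $\mu(G-e, \theta) = 0$.

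First I would apply Proposition~\ref{rule}(ii) to the edge $e = uv$, writing
\[
\mu(G, x) = \mu(G - e, x) - \mu(G \setminus uv, x).
\]
Evaluating at $x = \theta$ and using $\mu(G, \theta) = \mu(G - e, \theta) = 0$ yields $\mu(G \setminus uv, \theta) = 0$. By Proposition~\ref{rule}(i), $\mu(G \setminus uv, x)$ factors as the product of the matching polynomials of the connected components of $G \setminus uv$, so at least one component $C$ satisfies $m(\theta, C) \ge 1$, while $|V(C)| \le n_\theta - 2$.

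The remaining task is to convert this component into a connected graph in $\mathcal{H}_\theta$ of order strictly less than $n_\theta$, contradicting the minimality built into the definition of $n_\theta$. If $m(\theta, C) = 1$, then already $C \in \mathcal{H}_\theta$ with $|V(C)| < n_\theta$, a contradiction. If $m(\theta, C) \ge 2$, I invoke Proposition~\ref{ess} to produce a (possibly disconnected) graph $C'$ with $|V(C')| < |V(C)|$ and $m(\theta, C') = 1$; since matching polynomial multiplicities are additive over components, exactly one component of $C'$ is a connected graph in $\mathcal{H}_\theta$ of order at most $|V(C')| < n_\theta$, again contradicting the definition of $n_\theta$.

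The only mildly delicate step is the very last one, passing from ``$C$ has $\theta$ as a matching polynomial root'' to ``there exists a connected member of $\mathcal{H}_\theta$ of order less than $n_\theta$''; this is handled in a single application of Proposition~\ref{ess} together with the additivity of multiplicity across components. Everything else is a direct use of Proposition~\ref{rule}.
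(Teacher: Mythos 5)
Your proof is correct and follows the route the paper intends (the paper only remarks that the proposition "is derived from Proposition~\ref{rule}~(ii)"): apply the edge-deletion identity to force $\mu(G\setminus uv,\theta)=0$, then use multiplicativity over components together with Proposition~\ref{ess} to manufacture a connected graph in $\mathcal{H}_\theta$ of order less than $n_\theta$, contradicting the minimality of $n_\theta$. Nothing is missing; the care you take in reducing a component with $m(\theta,C)\ge 2$ back down to a connected member of $\mathcal{H}_\theta$ is exactly the point that needs attention.
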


\begin{proposition}
For $G\in \mathcal{H}_\theta$, the graph obtained from $G$ by adding one vertex adjacent to some vertices of $G$ can not be $\theta$-critical.
\end{proposition}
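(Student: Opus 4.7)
The plan is to argue by contradiction, exploiting the observation that deleting the newly added vertex simply recovers $G$. Let $u$ be the vertex added to $G$ to form the new graph $G'$. Then $G'\setminus u=G$, and since $G\in\mathcal{H}_\theta$ by hypothesis, I immediately record that
\[
m(\theta,G'\setminus u)=m(\theta,G)=1.
\]

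Now suppose for contradiction that $G'$ is $\theta$-critical. Because $G$ is connected (as a member of $\mathcal{H}_\theta$) and $u$ has at least one neighbor in $G$, the graph $G'$ is itself connected, so Theorem~\ref{essential} applies and forces $m(\theta,G')=1$. The definition of $\theta$-critical then demands that every vertex of $G'$ be $\theta$-essential; applying this in particular to the new vertex $u$ gives
\[
m(\theta,G'\setminus u)=m(\theta,G')-1=0,
\]
contradicting the equality $m(\theta,G'\setminus u)=1$ displayed above.

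There is essentially no obstacle: the whole argument amounts to comparing what $G\in\mathcal{H}_\theta$ forces at the single vertex $u$ (namely that deleting $u$ leaves multiplicity $1$) with what $\theta$-criticality of $G'$ would force at $u$ (namely that deleting $u$ drops the multiplicity from $1$ to $0$), and the two demands are patently incompatible. The only place where care is needed is the invocation of Theorem~\ref{essential}, which requires $G'$ to be connected; this is why the hypothesis that $u$ be adjacent to at least one vertex of $G$ is used.
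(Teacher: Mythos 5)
Your argument is correct and is exactly the intended one: the paper presents this as a straightforward consequence of the definition of $\theta$-criticality, and your proof spells that out by comparing $m(\theta,G'\setminus u)=m(\theta,G)=1$ with the value $0$ that $\theta$-essentiality of $u$ (together with Theorem~\ref{essential} giving $m(\theta,G')=1$) would force. Nothing is missing.
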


It is therefore natural to consider the existence of $\theta$ as a matching polynomial root. Equivalently, one may ask whether $\mathcal{H}_\theta$ and $\mathcal{H}_\theta'$ are empty.

A totally real algebraic integer is a root of some real-rooted monic polynomial with integer coefficients.
It's shown in \cite{Salez} that every totally real algebraic integer is a tree eigenvalue. As the matching polynomial of a tree is exactly its characteristic polynomial, every totally real algebraic integer is a root of the matching polynomial of a tree. This shows that, if $\theta$ is a totally real algebraic integer, then we have by Proposition \ref{ess} and the definition of $n_\theta$, there is a graph of order $n_\theta$ that is $\theta$-essential. This is precisely the statement of the next proposition.

\begin{proposition}
For any totally real algebraic integer $\theta$, both $\mathcal{H}_\theta$ and $\mathcal{H}_\theta'$ are not empty.
\end{proposition}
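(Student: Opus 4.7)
The plan is to invoke the cited theorem of Salez, which states that every totally real algebraic integer is an eigenvalue of some tree, and to combine it with Proposition~\ref{ess} to produce a connected graph with $\theta$ as a simple matching polynomial root. The argument splits into two steps: first show $\mathcal{H}_\theta\ne\emptyset$, then deduce $\mathcal{H}_\theta'\ne\emptyset$ by well-ordering.

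First, fix a totally real algebraic integer $\theta$. Salez's theorem supplies a tree $T$ with $\theta$ as an eigenvalue of its adjacency matrix. Since $\mu(T,x)=\phi(T,x)$ for any forest, $\theta$ is a root of $\mu(T,x)$, hence $m(\theta,T)\ge 1$. If $m(\theta,T)=1$, then $T\in\mathcal{H}_\theta$ because trees are connected, and we are done with the first half. Otherwise $m(\theta,T)\ge 2$, and Proposition~\ref{ess} (a direct consequence of Lemma~\ref{existess}) produces a graph $G'$ with $|V(G')|<|V(T)|$ and $m(\theta,G')=1$. Invoking Proposition~\ref{rule}(i), the matching polynomial factors over connected components, so the single multiplicity of $\theta$ must concentrate on exactly one component $C$ of $G'$; this $C$ is connected with $m(\theta,C)=1$ and therefore lies in $\mathcal{H}_\theta$. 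In either case $\mathcal{H}_\theta\ne\emptyset$.

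Once nonemptiness of $\mathcal{H}_\theta$ is established, the nonemptiness of $\mathcal{H}_\theta'$ is immediate: the orders of members of $\mathcal{H}_\theta$ form a nonempty set of positive integers, hence admit a minimum $n_\theta$, and any realizer of this minimum lies in $\mathcal{H}_\theta'$ by definition. I do not expect a substantive obstacle in this argument; the only point that deserves a moment of care is the passage from the possibly disconnected $G'$ produced by Proposition~\ref{ess} to a single connected component that still carries $\theta$ as a simple root, and this is settled by the multiplicativity of $\mu$ over disjoint unions. In particular, the entire proof fits comfortably in a few lines and relies on no machinery beyond what the paper has already introduced.
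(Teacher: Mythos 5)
Your argument is correct and follows the same route as the paper: invoke Salez's theorem to get a tree with $\theta$ as a matching polynomial root, apply Proposition~4 (reduction to a graph with $m(\theta,G')=1$), and take a minimum-order member to populate $\mathcal{H}_\theta'$. The only difference is that you explicitly handle the passage from a possibly disconnected $G'$ to a connected component carrying the simple root, a detail the paper leaves implicit.
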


\subsection{Graphs that are not $\theta$-critical}

We now turn to consider graphs that are not $\theta$-critical and focus on the order of such graphs.

\begin{lemma}\label{order}
Let $G$ be a connected graph with $m(\theta,G)\ge k\ge 2$. Then $|V(G)|\ge (k+1)n_\theta+1$. 
\end{lemma}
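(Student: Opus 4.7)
My plan is to induct on $n=|V(G)|$, using a $\theta$-positive vertex to pass to a strictly smaller graph whose components can be controlled componentwise. Since $m(\theta,G)\ge k\ge 2$ the graph $G$ cannot be $\theta$-critical, so Corollary \ref{pos} supplies a $\theta$-positive vertex $u$, and by definition $m(\theta,G\setminus u)=m(\theta,G)+1\ge k+1$.

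Next, decompose $G\setminus u=G_1\cup\cdots\cup G_r$ into connected components and write $k_i:=m(\theta,G_i)$, so that $\sum_i k_i\ge k+1$. For every component with $k_i=1$ the minimality in the definition of $n_\theta$ yields $|V(G_i)|\ge n_\theta$. For every component with $k_i\ge 2$ the inductive hypothesis applied to the connected graph $G_i$ (which has at most $n-1<n$ vertices) yields $|V(G_i)|\ge (k_i+1)n_\theta+1$. Letting $t$ denote the number of components with $k_i\ge 2$ and summing these bounds (components with $k_i=0$ contribute nonnegatively), a short rearrangement gives the central estimate
\[
|V(G\setminus u)|\;\ge\;n_\theta\sum_{i} k_i+t(n_\theta+1)\;\ge\;(k+1)n_\theta+t(n_\theta+1).
\]

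The conclusion then splits into two easy cases. If $t=0$, this becomes $|V(G\setminus u)|\ge (k+1)n_\theta$, so $n\ge(k+1)n_\theta+1$, as required. If $t\ge 1$, the extra term contributes at least $n_\theta+1$, giving the even stronger bound $n\ge(k+2)n_\theta+2$. No separate base case is needed, since the inductive hypothesis is only invoked on components of order strictly less than $n$, and components with $k_i\in\{0,1\}$ are handled unconditionally.

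The step I expect to be most delicate is the bookkeeping in the central estimate: one has to notice that when a component $G_i$ has $k_i\ge 2$, the inductive bound $(k_i+1)n_\theta+1$ replaces the naive $k_i n_\theta$ with an extra slack of $n_\theta+1$, which is exactly what absorbs the cost of having used $u$ to pass from $m(\theta,G)$ to $m(\theta,G\setminus u)$. Recognising this self-correcting balance is what allows a single induction on $n$ to close cleanly without splitting cases on the structure of $G\setminus u$ in advance.
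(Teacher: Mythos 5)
Your proof is correct and follows essentially the same route as the paper: delete a $\theta$-positive vertex supplied by Corollary \ref{pos}, decompose the result into components, bound components with multiplicity $1$ by $n_\theta$ and those with multiplicity $\ge 2$ by the inductive bound, and sum. The only difference is that you induct on $n$ rather than on $k$, which lets you skip the paper's minimal-order assumption and its argument that $G\setminus u$ must be disconnected; the bookkeeping in your central estimate matches the paper's final display.
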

\begin{proof}
Assume that $G$ has the minimum order among all graphs satisfying the condition. 
We prove by induction on $k$. 

Suppose that $k=2$. By Corollary~\ref{pos}, $G$ contains a $\theta$-positive vertex $u$. Then $m(\theta,G\setminus u)\ge 3$ and $G\setminus u$ is disconnected, as otherwise $G\setminus u$ is a connected graph of less order such that $m(\theta,G\setminus u)\ge k$, contradicting the assumption of $G$. Similarly, each component $G'$ of $G\setminus u$ satisfies that $m(\theta,G')\le 1$. Let $G_1,\dots,G_s$ denote the components of $G\setminus u$ with $\theta$ being a root of their matching polynomial. Then $s\ge 3$ and $|V(G_i)|\ge n_\theta$ for $i=1,\dots,s$. So \[
|V(G)|\ge \sum_{i=1}^s|V(G_i)|+1\ge 3n_\theta+1.
\]
This shows the result for $k=2$.

Assume in the following that $k\ge 3$. By Corollary~\ref{pos} again, $G$ contains a $\theta$-positive vertex $u$. Then $m(\theta,G\setminus v)\ge k+1$ and $G\setminus v$ is disconnected. Let $G_1,\dots,G_s$ denote the components of $G\setminus v$ with $\theta$ as a matching polynomial root and $t_i=m(\theta,G_i)$ for $i=1,\dots,s$. Then $t_i\le k-1$ for $i=1,\dots,s$ and $\sum_{i=1}^st_i\ge k+1$. We have by induction hypothesis, \[
|V(G_i)|\ge \begin{cases}
n_\theta,&\mbox{ if }t_i=1,\\
(t_i+1)n_\theta+1,&\mbox{ if }t_i\ge 2.
\end{cases}
\]
Let $s'=|\{t_i:t_i\ge 2\}|$ and assume that $t_i\ge 2$ for $1\le i\le s'$. Then $\sum_{i=1}^{s'}t_i\ge k+1-s+s'$ and so 
\begin{align*}
|V(G)|\ge \sum_{i=1}^s|V(G_i)|+1&\ge \sum_{i=1}^{s'}|V(G_i)|+(s-s')n_\theta+1\\
&\ge \sum_{i=1}^{s'}((t_i+1)n_\theta+1)+(s-s')n_\theta+1\ge (k+1)n_\theta+1,
\end{align*}
proving the result. 
\end{proof}

\begin{theorem}\label{orderr}
Let $G$ be a connected graph with $m(\theta,G)\ge k\ge 1$. If $G$ is not $\theta$-critical, then $|V(G)|\ge (k+1)n_\theta+1$. 
\end{theorem}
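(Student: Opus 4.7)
The plan is to reduce to Lemma \ref{order} for $k \ge 2$ and to handle the residual base case $k = 1$ by a short case analysis on the components of a vertex deletion. For $k \ge 2$, note that the hypothesis that $G$ is not $\theta$-critical is automatic, since any $\theta$-critical graph satisfies $m(\theta,G) = 1$ by definition. Thus, for $k \ge 2$, the desired bound $|V(G)| \ge (k+1)n_\theta + 1$ is precisely the conclusion of Lemma \ref{order}, and nothing new needs to be proved.

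For the base case $k = 1$, I would proceed as follows. Since $\theta$ is a root of $\mu(G,x)$ and $G$ is not $\theta$-critical, Corollary \ref{pos} provides a $\theta$-positive vertex $u \in V(G)$, giving $m(\theta, G\setminus u) = m(\theta,G) + 1 \ge 2$. Writing $G \setminus u = G_1 \cup \cdots \cup G_r$ for the connected components, Proposition \ref{rule}(i) yields $\sum_{i=1}^{r} m(\theta, G_i) \ge 2$. I would then split into two subcases. If some component $G_i$ satisfies $m(\theta, G_i) \ge 2$, then Lemma \ref{order} applied to that component (which is itself connected) forces $|V(G_i)| \ge 3n_\theta + 1$, and so
\[
|V(G)| \;\ge\; |V(G_i)| + 1 \;\ge\; 3n_\theta + 2 \;\ge\; 2n_\theta + 1,
\]
using $n_\theta \ge 1$. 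Otherwise each component contributes at most one to the sum, which forces at least two components, say $G_1$ and $G_2$, with $m(\theta, G_j) = 1$. By the very definition of $n_\theta$, each such component has $|V(G_j)| \ge n_\theta$, and therefore $|V(G)| \ge |V(G_1)| + |V(G_2)| + 1 \ge 2n_\theta + 1$.

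No step of this argument is genuinely difficult. The main point worth articulating is that Lemma \ref{order} can be reused on an individual component in the first subcase, which is legitimate since the lemma only requires connectedness and a lower bound on $m(\theta,\cdot)$. If anything is an obstacle, it is making sure the bookkeeping across components uses the additivity of the multiplicity from Proposition \ref{rule}(i) and the defining minimality of $n_\theta$; but both are in hand, so the base case collapses to the two-line case analysis above.
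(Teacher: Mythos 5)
Your proof is correct and follows essentially the same route as the paper: reduce $k\ge 2$ to Lemma \ref{order}, then for $k=1$ take a $\theta$-positive vertex $u$ via Corollary \ref{pos} and split on whether some component of $G\setminus u$ has multiplicity at least $2$ (reapply Lemma \ref{order}) or at least two components each carry $\theta$ once (use the minimality of $n_\theta$). The only cosmetic difference is that you make the additivity of multiplicities across components explicit via Proposition \ref{rule}(i), which the paper leaves implicit.
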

\begin{proof}
By Lemma \ref{order}, it remains to show the case for $k=1$. As $G$ is not $\theta$-critical, there is a $\theta$-positive vertex $u$ by Corollary \ref{pos}. Then $m(\theta,G\setminus u)\ge 2$. If $G\setminus u$ is connected, or if it is  disconnected but contains a component $G'$ with $m(\theta,G')\ge 2$, then the result follows from Lemma \ref{order}. If $G\setminus u$ is disconnected and each component $H$ satisfies $m(\theta,H)\le 1$, then there are at least two components containing $\theta$ as a matching polynomial root. Therefore, $|V(G\setminus u)|\ge 2n_{\theta}$, and thus $|V(G)|\ge 2n_\theta+1$. 
\end{proof}

The following corollary is immediate.

\begin{corollary}\label{critical}
Each graph in $\mathcal{H}_\theta$ of order $n_\theta$ is $\theta$-critical. Moreoever, if $G$ is a graph  of order $n$ with a matching polynomial root $\theta$, where $n_\theta\le n\le 2n_\theta$, then $G$ is $\theta$-critical.
\end{corollary}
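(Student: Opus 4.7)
The corollary is a direct consequence of Theorem~\ref{orderr} applied with $k=1$, and the plan is to argue each assertion by contradiction.

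For the first assertion, I would take $G \in \mathcal{H}_\theta$ with $|V(G)| = n_\theta$. By definition of $\mathcal{H}_\theta$, $G$ is connected and $m(\theta,G) = 1$, so in particular $m(\theta,G) \ge 1$. If $G$ were not $\theta$-critical, then Theorem~\ref{orderr} (with $k=1$) would force
\[
|V(G)| \;\ge\; (1+1)n_\theta + 1 \;=\; 2n_\theta + 1,
\]
contradicting $|V(G)| = n_\theta$ (note $n_\theta \ge 1$). Hence every vertex of $G$ is $\theta$-essential, i.e., $G$ is $\theta$-critical.

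For the second assertion, suppose $G$ is a connected graph of order $n$ with $n_\theta \le n \le 2n_\theta$ and $\theta$ a root of $\mu(G,x)$, so $m(\theta,G) \ge 1$. Again, if $G$ is not $\theta$-critical, Theorem~\ref{orderr} gives $|V(G)| \ge 2n_\theta + 1 > 2n_\theta \ge n$, a contradiction. Thus $G$ must be $\theta$-critical (which in particular forces $m(\theta,G) = 1$ by Theorem~\ref{essential}).

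There is no real obstacle here; the only mild subtlety is the implicit connectedness assumption in the second sentence, which is inherited from the blanket connectedness hypothesis of Theorem~\ref{orderr}. If one wanted to state the corollary for possibly disconnected $G$, one would apply the above argument to the component of $G$ whose matching polynomial has $\theta$ as a root, observing that such a component has order in $[n_\theta, 2n_\theta]$ as well. I would therefore keep the write-up to a short two-step deduction from Theorem~\ref{orderr}, emphasising that the bound $|V(G)| \ge 2n_\theta + 1$ is the single fact doing all the work.
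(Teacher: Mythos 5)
Your proof is correct and matches the paper's intent exactly: the paper states this corollary as an immediate consequence of Theorem~\ref{orderr} (the bound $|V(G)|\ge (k+1)n_\theta+1$ with $k=1$), which is precisely the contradiction argument you give. Your remark about the implicit connectedness assumption is a fair observation but not a gap in the intended statement.
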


Actually, it's possible that there is no $\theta$-critical graphs of order $n$, where $n_\theta<n\le 2n_\theta$. For example, for $\theta=1$, $n_\theta=2$ and there is no $1$-critical graphs of order $3$ or $4$ (see in Section 5).
On the other hand, for $\theta=\sqrt{3}$, $n_\theta=3$, $K_{1,3}$ and $P_5$ are $\sqrt{3}$-critical graphs of order $4$ and $5$, respectively.
However, the converse of the second part of Corollary \ref{critical} is not true, see Theorem \ref{1}. 

\subsection{Proof of Theorem \ref{essbound}}

For integer $n$ with $n\equiv 1\pmod {n_\theta}$, let $\mathcal{H}_\theta^n$ be the set of graphs obtained from $K_1\cup G_1\cup \dots \cup G_t$ by adding at least one edge between the vertex of $K_1$ and each $G_i$, where $G_1,\dots,G_t\in \mathcal{H}_\theta'$ and $t=\frac{n-1}{n_\theta}$. 

Now, we're ready to show the maximum multiplicity of $\theta$ as a matching polynomial root for general $\theta$.

\begin{proof}[Proof of Theorem \ref{essbound}]
The upper bound follows by Lemma~\ref{order}. It suffices to prove that $m(\theta,G)=\frac{n-n_\theta-1}{n_\theta}$ with equality if and only if $n\equiv 1\pmod {n_\theta}$ and $G\in \mathcal{H}_{\theta}^n$.

Suppose that $m(\theta,G)=\frac{n-n_\theta-1}{n_\theta}:=k$. Then $n\equiv 1\pmod {n_\theta}$. As $G$ is not $\theta$-critical, there is a $\theta$-positive vertex $u$ in $G$ by Corollary~\ref{pos}. Then $m(\theta,G\setminus u)=k+1$. 
Let $G_1,\dots,G_s$ denote the components of $G\setminus u$. It's possible for $s=1$.
Let $t_i=m(\theta,G_i)$ for $i=1,\dots,s$ and $s'=|\{t_i:t_i\ge 2\}|$. Assume that $t_i\ge 2$ for $1\le i\le s'$. Then $\sum_{i=1}^{s'}t_i=k+1-s+s'$. If $s'\ge 1$, then by Lemma \ref{order},
\[
n-1=\sum_{i=1}^t|V(G_i)|\ge\sum_{i=1}^{s'}((t_i+1)n_\theta+1)+(s-s')n_\theta=(k+1+s')n_\theta+s'=n-1+s'(n_\theta+1)>n-1,
\]
a contradiction. Hence, $t_i=1$ for $i=1,\dots,s$ and thus $s=k+1$. Therefore, each $G_i$ has order $n_\theta$, $|V(G_i)|=n_\theta$, i.e., $G_i\in \mathcal{H}_\theta'$ for all $i$. As $G$ is connected, we conclude that $G\in \mathcal{H}_{\theta}^n$.

It remains to show that each graph $H\in \mathcal{H}_\theta^n$ satisfies that $m(\theta,H)=\frac{n-n_\theta-1}{n_\theta}$. Let $v$ denote the vertex of $K_1$ in $H$. Note that $H\setminus v$ consists of $\frac{n-1}{n_\theta}$ components, all of which belong to $\mathcal{H}_\theta$. So $m(\theta,H\setminus v)=\frac{n-1}{n_\theta}$. It then follows by Lemma~\ref{interlacing} that $m(\theta,H)\ge \frac{n-n_\theta-1}{n_\theta}$. By Lemma \ref{order}, $m(\theta,H)\le \frac{n-n_\theta-1}{n_\theta}$ and so $m(\theta,H)= \frac{n-n_\theta-1}{n_\theta}$.
\end{proof}

Actually, Theorem \ref{upbound} can be viewed as a special case of Theorem \ref{essbound} for $\theta=\pm 1$. For any nonzero $\theta\ne \pm 1$, we have $n_\theta\ge 3$, and thus the corollary follows from the fact that $\mathcal{H}_{\pm \sqrt{2}}'=\{P_3\}$ and $\mathcal{H}_{\pm\sqrt{3}}'=\{K_3\}$.

\begin{corollary}
Let $G$ be a connected graph that is not $\theta$-critical. If $\theta\ne 0,\pm 1$, then $m(\theta,G)\le \frac{n-4}{3}$ with equality if and only if $\theta=\pm \sqrt{2}$ or $\pm\sqrt{3}$, $n\equiv 1\pmod 3$ and $G\in \mathcal{H}_{\theta}^n$. 
\end{corollary}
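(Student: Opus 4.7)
The plan is to derive this corollary as an immediate consequence of Theorem \ref{essbound} by optimizing the bound $\frac{n-n_\theta-1}{n_\theta}$ over the admissible values of $n_\theta$ when $\theta\ne 0,\pm 1$. The key observation, already made in the paragraph following the definition of $\mathcal{H}_\theta'$, is that $n_\theta\ge 3$ whenever $\theta\ne 0,\pm 1$, since the only connected graphs of order at most $2$ are $K_1$ and $K_2$, whose matching polynomials $x$ and $x^2-1$ have roots in $\{0,\pm 1\}$.

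The first step is to rewrite the bound as $\frac{n-n_\theta-1}{n_\theta}=\frac{n-1}{n_\theta}-1$ and observe that this expression is strictly decreasing in $n_\theta$. Consequently, for $n_\theta\ge 3$, Theorem \ref{essbound} yields $m(\theta,G)\le \frac{n-4}{3}$, with equality forcing $n_\theta=3$ together with $n\equiv 1\pmod 3$ and $G\in\mathcal{H}_\theta^n$.

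The second step is to classify the values of $\theta$ with $n_\theta=3$. By definition this requires a connected graph of order exactly $3$ whose matching polynomial has $\theta$ as a root. The only connected graphs on three vertices are $P_3$ and $K_3$, whose matching polynomials are $x^3-2x$ and $x^3-3x$; their nonzero roots are $\pm\sqrt{2}$ and $\pm\sqrt{3}$ respectively. Hence $\theta=\pm\sqrt{2}$, in which case $\mathcal{H}_\theta'=\{P_3\}$, or $\theta=\pm\sqrt{3}$, in which case $\mathcal{H}_\theta'=\{K_3\}$. Combining this identification with the equality characterization of Theorem \ref{essbound} yields the stated description of the extremal graphs.

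I do not anticipate any substantive difficulty: the whole argument is bookkeeping on top of Theorem \ref{essbound} together with the elementary enumeration of connected graphs on at most three vertices already carried out in Section~4.1.
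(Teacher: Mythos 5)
Your proposal is correct and follows essentially the same route as the paper: both deduce the bound from Theorem \ref{essbound} via the observation that $n_\theta\ge 3$ when $\theta\ne 0,\pm 1$, and both identify the equality case by noting that $n_\theta=3$ occurs exactly for $\theta=\pm\sqrt{2}$ (with $\mathcal{H}_\theta'=\{P_3\}$) and $\theta=\pm\sqrt{3}$ (with $\mathcal{H}_\theta'=\{K_3\}$). Your write-up is merely more explicit about the monotonicity of $\frac{n-1}{n_\theta}-1$ in $n_\theta$ and the enumeration of connected graphs on three vertices.
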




%

Noting that all extremal graphs (i.e., graphs attaining the equality) contain cut-vertices, we are led to consider the extension of the result to $t$-connected graphs.

Let $t\ge 1$ and $n\ge (t+2)n_\theta+t$ with $n\equiv t\pmod{n_\theta}$, and set $s=\frac{n-t}{n_\theta}$. Let $H=tK_1\cup G_1\cup \dots \cup G_s$ with $G_1,\dots,G_s\in \mathcal{H}_\theta'$ and $A=V(tK_1)$ be the set of $t$ isolated vertices. Let $\mathcal{H}_{\theta}^{n,t}$ denote the set of graphs obtained from $H$ by: \\
(i) for each $v\in A$ and $i=1,\dots,s$, adding at least one edge between $v$ and  $V(G_i)$, and \\
(ii) adding an arbitrary set of edges within $A$.\\
In particularly, when $t=1$ we return to $\mathcal{H}_\theta^{n,1}=\mathcal{H}_\theta^n$.
%
%
%

\begin{theorem}
Let $G$ be a $t$-connected graph that is not $\theta$-critical. Then $m(\theta,G)\le \frac{n-(n_\theta+1)t}{n_\theta}$ with equality if and only if $n\equiv t\pmod {n_\theta}$ and $G\in \mathcal{H}_\theta^{n,t}$.
\end{theorem}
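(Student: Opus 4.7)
The plan is to proceed by induction on $t$, with base case $t=1$ given directly by Theorem \ref{essbound}. For the inductive step, suppose the theorem holds for $t-1$ and let $G$ be a $t$-connected graph that is not $\theta$-critical, writing $k:=m(\theta,G)\ge 1$. By Corollary \ref{pos}, $G$ admits a $\theta$-positive vertex $v$, so $m(\theta,G\setminus v)=k+1\ge 2$; Theorem \ref{essential} then implies $G\setminus v$ is not $\theta$-critical, and the standard fact $\kappa(G\setminus v)\ge \kappa(G)-1=t-1$ makes it $(t-1)$-connected. Applying the inductive upper bound to $G\setminus v$ gives
\[
k+1=m(\theta,G\setminus v)\le \frac{(n-1)-(n_\theta+1)(t-1)}{n_\theta},
\]
which rearranges to $k\le \frac{n-(n_\theta+1)t}{n_\theta}$, proving the bound.

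For the forward direction of equality, suppose $k=\frac{n-(n_\theta+1)t}{n_\theta}$. Then the inequality above is tight, so by the inductive characterization $n\equiv t\pmod{n_\theta}$ (equivalently $n-1\equiv t-1\pmod{n_\theta}$) and $G\setminus v\in \mathcal{H}_\theta^{n-1,t-1}$. Decompose $G\setminus v=A'\cup G_1\cup\dots\cup G_s$ with $|A'|=t-1$, $s=k+t$, each $G_i\in \mathcal{H}_\theta'$, and each vertex of $A'$ adjacent to at least one vertex of every $G_i$. Since $G$ is $t$-connected and $|A'|=t-1<t$, the graph $G\setminus A'=\{v\}\cup G_1\cup\dots\cup G_s$ must be connected, which forces $v$ to be adjacent to at least one vertex of each $G_i$. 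Taking $A:=A'\cup\{v\}$ (so $|A|=t$) exhibits $G$ as a member of $\mathcal{H}_\theta^{n,t}$. Conversely, for $G\in \mathcal{H}_\theta^{n,t}$ that is $t$-connected, choose $v\in A$; then $G\setminus v$ inherits the structure of $\mathcal{H}_\theta^{n-1,t-1}$ with hub set $A\setminus\{v\}$, is $(t-1)$-connected, and has $m(\theta,G\setminus v)\ge 2$ hence is not $\theta$-critical. The inductive converse gives $m(\theta,G\setminus v)=k+1$, so Lemma \ref{interlacing} yields $m(\theta,G)\ge k$, and combining with the upper bound forces $m(\theta,G)=k$.

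The main obstacle lies in the forward equality argument, namely the step that pins down $v$'s adjacency to every $G_i$: this hinges on applying $t$-connectivity to the cut $A'$ of size exactly $t-1$, one below the connectivity threshold, so that $G\setminus A'$ must remain connected and $v$ has no choice but to touch each component. A secondary technicality is checking the arithmetic matchup $s=k+t$ between the parameters at levels $(n,t)$ and $(n-1,t-1)$, and ensuring that the graphs $G\in \mathcal{H}_\theta^{n,t}$ used in the converse are actually $t$-connected so that the upper bound applies to them and closes the loop.
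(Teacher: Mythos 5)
Your proof is correct and follows essentially the same route as the paper: induction on $t$ with Theorem \ref{essbound} as the base case, deleting a $\theta$-positive vertex to transfer the bound from the $(t-1)$-connected graph $G\setminus v$, and using $t$-connectivity to force the extremal structure (you in fact spell out two points the paper leaves implicit, namely that $G\setminus v$ is not $\theta$-critical so the induction hypothesis applies, and why $v$ must attach to every $G_i$). The only cosmetic difference is in the converse, where you peel off the hub vertices one at a time through the induction rather than deleting the whole $t$-vertex cut $W$ at once and applying interlacing $t$ times as the paper does.
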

\begin{proof}
We prove by induction on $t$. If $t=1$, then the result follows by Theorem~\ref{essbound}. Assume that $t\ge 2$ and the result is true for all $(t-1)$-connected graphs. 

As $G$ is not $\theta$-critical, we have by Corollary~\ref{pos} that there is a $\theta$-positive vertex $u$ such that $m(\theta,G\setminus u)=m(\theta,G)+1$. Note that $G\setminus u$ is $(t-1)$-connected. Then by the induction hypothesis, $m(\theta,G\setminus u)\le \frac{n-1-(n_\theta+1)(t-1)}{n_\theta}$ and so 
\begin{equation}\label{eq1}
m(\theta,G)=m(\theta,G\setminus u)-1\le  \frac{n-1-(n_\theta+1)(t-1)}{n_\theta}-1=\frac{n-(n_\theta+1)t}{n_\theta}.
\end{equation}

Suppose that $m(\theta,G)=\frac{n-(n_\theta+1)t}{n_\theta}$. Then $n\equiv t\pmod {n_\theta}$ and $m(\theta,G)\ge 2$. By Corollary~\ref{pos}, $G$ contains a $\theta$-positive vertex $u$. Then $m(\theta,G\setminus u)=\frac{n-(n_\theta+1)t}{n_\theta}+1=\frac{n-1-(n_\theta+1)(t-1)}{n_\theta}$. Note that $G\setminus u$ is a $(t-1)$-connected graph of order $n-1$. By the induction hypothesis, $G\setminus u\in \mathcal{H}_\theta^{n-1,t-1}$. As the connectivity of $G\setminus u$ is equal to $t-1$ and $G$ is $t$-connected, we have $G\in \mathcal{H}_\theta^{n,t}$. 

It remains to show that each graph $H\in \mathcal{H}_\theta^{n,t}$ satisfies $m(\theta,H)=\frac{n-(n_\theta+1)t}{n_\theta}$. Let $W$ denote the vertex cut of $H$ on $t$ vertices such that $H\setminus W$ consists of components in $\mathcal{H}_\theta$. Then $m(\theta,H\setminus W)=\frac{n-t}{n_\theta}$ and so, by Lemma \ref{interlacing}, $m(\theta,H)\ge \frac{n-t}{n_\theta}-t=\frac{n-(n_\theta+1)t}{n_\theta}$. On the other hand, $m(\theta,H)\le \frac{n-(n_\theta+1)t}{n_\theta}$ by Eq.~\eqref{eq1}. Therefore, $m(\theta,H)=\frac{n-(n_\theta+1)t}{n_\theta}$. This proves the theorem.
\end{proof}

The extension of Theorem \ref{upbound} to $t$-connected graphs is shown as follows.

\begin{corollary}
Let $G$ be a $t$-connected graph of order $n\ge 3t+2$. If $\theta\ne 0$, then $m(\theta,G)\le \frac{n-3t}{2}$ if and only if $n\equiv t\pmod 2$, $\theta=\pm 1$ and $G\in \mathcal{H}_1^{n,t}$.
\end{corollary}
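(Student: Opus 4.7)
The plan is to derive this corollary directly from the immediately preceding theorem by a short case analysis on $n_\theta$. The only ingredient needed beyond that theorem is to translate its general bound $\frac{n-(n_\theta+1)t}{n_\theta}$ into the specific bound $\frac{n-3t}{2}$, which happens precisely when $n_\theta=2$, i.e.\ when $\theta=\pm 1$. I would first dispose of the $\theta$-critical case: if $G$ is $\theta$-critical then $m(\theta,G)=1$, which is at most $\frac{n-3t}{2}$ since $n\ge 3t+2$. For the remainder I assume $G$ is not $\theta$-critical and invoke the preceding theorem to obtain $m(\theta,G)\le \frac{n-(n_\theta+1)t}{n_\theta}$, with equality iff $n\equiv t\pmod{n_\theta}$ and $G\in\mathcal{H}_\theta^{n,t}$.

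Next I would split on whether $\theta=\pm 1$. When $\theta=\pm 1$ we have $n_\theta=2$ and $\mathcal{H}_{\pm 1}^{n,t}=\mathcal{H}_1^{n,t}$, so the bound becomes exactly $\frac{n-3t}{2}$ and the equality conditions match the statement verbatim. When $\theta\ne 0,\pm 1$ we have $n_\theta\ge 3$, and a short calculation gives
\[
\frac{n-3t}{2}-\frac{n-(n_\theta+1)t}{n_\theta}=\frac{(n_\theta-2)(n-t)}{2n_\theta}>0,
\]
using $n_\theta-2\ge 1$ and $n\ge 3t+2>t$. Hence $m(\theta,G)$ is strictly less than $\frac{n-3t}{2}$ in this regime, and no such $\theta$ can achieve equality.

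Combining the two branches produces the global upper bound $m(\theta,G)\le \frac{n-3t}{2}$ and pinpoints the equality case to exactly $\theta=\pm 1$, $n\equiv t\pmod 2$, and $G\in\mathcal{H}_1^{n,t}$, with the converse direction supplied by the preceding theorem applied at $n_\theta=2$. The main point to verify carefully is the strict inequality calculation above, together with the consistency of the boundary situation $n=3t+2$: there a $\theta$-critical $G$ also gives $m(\theta,G)=1=\frac{n-3t}{2}$, and one must observe that graphs in $\mathcal{H}_1^{n,t}$ themselves contain $\theta$-positive vertices (namely the vertices of the $t$-cut $A$ after adding edges), hence are never $\theta$-critical; so the stated extremal family is internally consistent with the forward implication.
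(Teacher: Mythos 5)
Your reduction to the preceding theorem on $t$-connected graphs is the right one, and the key computation
\[
\frac{n-3t}{2}-\frac{n-(n_\theta+1)t}{n_\theta}=\frac{(n_\theta-2)(n-t)}{2n_\theta}
\]
is correct; since the paper offers no proof of this corollary, this is exactly the intended derivation, and for $n_\theta\ge 3$ it does show that equality forces $\theta=\pm1$ \emph{among graphs that are not $\theta$-critical}. The backward direction at $n_\theta=2$ is likewise fine.

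The genuine gap is the boundary case you yourself flag and then wave away. Your observation that graphs in $\mathcal{H}_1^{n,t}$ are never $\theta$-critical only secures the backward implication; it does nothing for the forward one. When $n=3t+2$ the bound equals $1$, and \emph{any} $t$-connected $\theta$-critical graph of that order attains it, since $m(\theta,G)=1$ for such $G$ — and such graphs exist with $\theta\ne\pm1$. Concretely, take $t=1$, $n=5$, $G=P_5$, $\theta=\sqrt 3$: the paper itself records that $P_5$ is $\sqrt 3$-critical, so $m(\sqrt3,P_5)=1=\frac{5-3}{2}$ and $5\equiv 1\pmod 2$, yet $\theta\ne\pm1$ and $P_5\notin\mathcal{H}_1^{5,1}$. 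So the equality characterization as stated is false at $n=3t+2$; your branch ``$G$ is $\theta$-critical, hence $m(\theta,G)=1\le\frac{n-3t}{2}$'' cannot be closed off. To repair the argument (and the statement) you must either carry over the hypothesis that $G$ is not $\theta$-critical from the theorem you are invoking, or strengthen the order condition to $n\ge 3t+4$ so that $\frac{n-3t}{2}\ge 2$ rules out the $\theta$-critical case — note this is precisely what the hypothesis $n\ge 7$ accomplishes in Theorem~\ref{upbound} for $t=1$. State which fix you adopt and verify the rest of your argument goes through under it.
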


\subsection{Proof of Theorem \ref{spec}}

We prove Theorem \ref{spec} in this subsection by construction. 
In the subsequent discussion, we aim to establish Proposition \ref{qk}, which implies Theorem \ref{spec}.

Two vertices $u$ and $v$ of $G$ are called twin vertices if they are adjacent and share exactly the same neighbors in $G$, excluding each other.

Let $G_0$ be a graph obtained from a graph $G_0'$ in $\mathcal{H}_\theta'$ by adding a twin vertex $v'$ of some vertex $v$ in $G_0'$.
Let $k$ be an integer with $k\ge 2$, $u$ be a vertex and $G_1,\dots, G_k\in \mathcal{H}_\theta'$. Let
$\mathcal{Q}_\theta^k$ denote the set of graphs obtained from $\{u\}\cup G_0\cup G_1\cup\dots \cup G_k$ by:\\
(i) adding edges $uv$ and $uv'$, and \\
(ii) for each $i=1,\dots,k$, adding at least one edge between $u$ and $V(G_i)$.\\ Particularly, when $\theta=0$, we have $\mathcal{Q}_0^k=\left\{K_1\vee (K_2\cup kK_1)\right\}$. See Fig. \ref{example} for an example of a graph in $\mathcal{Q}_\theta^k$ with $\theta=\sqrt{3}$ and $k=2$.

\begin{proposition}\label{qk}
For $G\in \mathcal{Q}_\theta^k$, $v$ is $\theta$-positive but not $\theta$-special in $G$.
\end{proposition}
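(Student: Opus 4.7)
My plan is to first establish $m(\theta, G) = k - 1$ (so that $v$ is $\theta$-positive, given $m(\theta, G \setminus v) = k$ below) and then verify individually that none of the neighbors of $v$ is $\theta$-essential. I begin with two preliminary observations. First, $m(\theta, G_0) = 0$: since $v$ and $v'$ are twins in $G_0$, $G_0 \setminus v \cong G_0' \in \mathcal{H}_\theta'$, so $m(\theta, G_0 \setminus v) = 1$; if $\theta$ were a root of $\mu(G_0, x)$, Corollary \ref{critical} (applicable since $|V(G_0)| = n_\theta + 1 \in [n_\theta, 2n_\theta]$) would force $G_0$ to be $\theta$-critical, yielding $m(\theta, G_0 \setminus v) = 0$, a contradiction. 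Second, $G \setminus v = \{u\} \cup (G_0 \setminus v) \cup G_1 \cup \cdots \cup G_k$ has each attached component in $\mathcal{H}_\theta'$ joined to $u$ by at least one edge (via $uv'$ for the $G_0 \setminus v$ component, and by hypothesis for each $G_i$), so $G \setminus v \in \mathcal{H}_\theta^{n-1}$ and Theorem \ref{essbound} gives $m(\theta, G \setminus v) = k$.

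The crux is to prove $m(\theta, G) = k - 1$. The lower bound is immediate from Lemma \ref{interlacing}. To rule out $m(\theta, G) \ge k$, I proceed by contradiction. Since $k \ge 2$, $G$ is connected and not $\theta$-critical, so Corollary \ref{pos} yields a $\theta$-positive vertex $z$ with $m(\theta, G \setminus z) \ge k + 1$. If $G \setminus z$ is connected, Lemma \ref{order} forces $|V(G \setminus z)| \ge (k+2)n_\theta + 1 > (k+1)n_\theta + 1 = |V(G \setminus z)|$, a contradiction. If $G \setminus z$ is disconnected, the counting in the proof of Lemma \ref{order} forces $G \setminus z$ to consist of exactly $k+1$ components in $\mathcal{H}_\theta'$ together with at most one additional component of order $1$, so every component has order at most $n_\theta + 1$. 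The main obstacle is to rule out this structure in our specific $G$: because $u$ is adjacent to $v$, $v'$, and at least one vertex of every $G_i$, for any single-vertex deletion the component of $G \setminus z$ containing $u$ includes $\{u, v, v'\}$ together with every $G_i$ whose $u$-attachment survives, giving at least $kn_\theta + 2$ vertices, which exceeds $n_\theta + 1$ for $k \ge 2$. The case $\theta = 0$ must be handled separately: $\mathcal{Q}_0^k$ contains only the graph $K_1 \vee (K_2 \cup kK_1)$, and a direct maximum matching computation (any maximum matching has size $2$) gives $m(0, G) = k - 1$.

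Finally I verify that no neighbor of $v$ is $\theta$-essential. By twin symmetry, $m(\theta, G \setminus v') = m(\theta, G \setminus v) = k = m(\theta, G) + 1$, so $v'$ is $\theta$-positive. For $u$, the first preliminary gives $m(\theta, G \setminus u) = m(\theta, G_0) + \sum_i m(\theta, G_i) = 0 + k = k = m(\theta, G) + 1$, so $u$ is also $\theta$-positive. For each $w \in N_{G_0'}(v)$, Lemma \ref{interlacing} applied to $w$ in $G_0$ yields $m(\theta, G_0 \setminus w) \le 1$, hence $m(\theta, (G \setminus w) \setminus u) = m(\theta, G_0 \setminus w) + k \ge k$, and applying Lemma \ref{interlacing} again to $u$ in $G \setminus w$ gives $m(\theta, G \setminus w) \ge k - 1 = m(\theta, G)$, so $w$ is $\theta$-neutral or $\theta$-positive but not $\theta$-essential. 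Therefore $v$ is $\theta$-positive but not $\theta$-special.
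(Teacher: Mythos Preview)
Your proof is correct and reaches the same conclusion as the paper, but the central step---ruling out $m(\theta,G)\ge k$---is handled by a genuinely different argument. The paper shows that if $m(\theta,G)=k$ then $u$ is $\theta$-neutral, takes a vertex $w$ of $G_1$ adjacent to $u$, observes it is $\theta$-essential in $G\setminus u$, and invokes Lemma~\ref{neutral} to conclude $w$ is $\theta$-essential in $G$; this makes $u$ $\theta$-special and hence $\theta$-positive, a contradiction. You instead pick an arbitrary $\theta$-positive vertex $z$, push the counting behind Lemma~\ref{order} to force every component of $G\setminus z$ to have at most $n_\theta+1$ vertices, and then exploit the explicit shape of $G$ to show the component containing $u$ is too large. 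Your route avoids Lemma~\ref{neutral} altogether and is entirely elementary; the paper's route is shorter and highlights the role of the Gallai--Edmonds machinery. Likewise, for $m(\theta,G_0)=0$ the paper again uses Lemma~\ref{neutral}, whereas your appeal to Corollary~\ref{critical} is cleaner.

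Two small phrasing issues worth tightening. First, the assertion ``exactly $k{+}1$ components in $\mathcal{H}_\theta'$ together with at most one additional component of order~$1$'' is slightly off: one component may have order $n_\theta+1$ rather than $n_\theta$. Your next sentence (``every component has order at most $n_\theta+1$'') is nevertheless correct and is all you use. Second, the claim that the component of $G\setminus z$ containing $u$ ``includes $\{u,v,v'\}$'' tacitly assumes $z\notin\{u,v,v'\}$. This is fine once you note that $z=u$ is impossible (since $m(\theta,G\setminus u)=k$ from your first preliminary) and that $z\in\{v,v'\}$ gives $G\setminus z$ connected, already covered; but it would be cleaner to say so. With those clarifications the argument is complete; the verification that no neighbor of $v$ is $\theta$-essential matches the paper's.
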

\begin{proof}
We firstly claim that $\theta$ is not a root of $\mu(G_0,x)$. Otherwise, we have by Lemma \ref{order} that $m(\theta,G_0)=1$. As $G_0\setminus v'=G_0'\in \mathcal{H}_\theta'$, $v'$ is $\theta$-neutral. Similarly, $v$ is $\theta$-neutral. It then follows from Lemma \ref{neutral} that $v$ is $\theta$-neutral or $\theta$-positive in $G_0'$. However, $G_0'$ is $\theta$-critical, any vertex in $G_0'$ is $\theta$-essential, a contradiction. 

Next, we show that $m(\theta,G)=k-1$. If $\theta=0$, then $G=K_1\vee (K_2\cup kK_1)$, which shows that there are exactly $k-1$ vertices missing the maximum matching, and thus $m(\theta,G)=k-1$. Now we assume that $\theta\ne 0$. In this case, $n_\theta\ge 2$.
As $|V(G)|=(k+1)n_\theta+2$, Lemma \ref{order} implies that $m(\theta,G)\le k$. 
As $G\setminus u=G_0\cup G_1\cup\dots \cup G_k$ and $G_1,\dots,G_k\in \mathcal{H}_\theta'$, it follows that $m(\theta,G\setminus u)=k$. 
Suppose, for contradiction, that $m(\theta,G)=k$. Then $u$ is $\theta$-neutral in $G$. Let $w$ be a vertex in $G_1$. As $G_1\in \mathcal{H}_\theta'$, we have $m(\theta,G_1\setminus w)=0$, and thus $m(\theta,G\setminus uw)=k-1$. This implies that $w$ is $\theta$-essential in $G\setminus u$. By Lemma \ref{neutral}, $w$ is $\theta$-essential in $G$, which shows that $u$ is $\theta$-special in $G$. Hence, $u$ is $\theta$-positive in $G$, contradicting to the assumption that $u$ is $\theta$-neutral. Therefore, $m(\theta,G)=k-1$, and $u$ is $\theta$-positive in $G$, as claimed.

Note that $G\setminus v\in \mathcal{H}_\theta^{(k+1)n_\theta+1}$. We have $m(\theta,G\setminus v)=k$ and so $v$ is $\theta$-positive in $G$. Similarly, $v'$ is $\theta$-positive in $G$. For any $z\in V(G_0)\setminus \{v,v'\}$, as $m(\theta,G\setminus uz)\ge k$, $m(\theta,G\setminus z)\ge k-1$ and so $z$ is $\theta$-neutral or $\theta$-positvie. So $v$ is not $\theta$-special.
\end{proof}

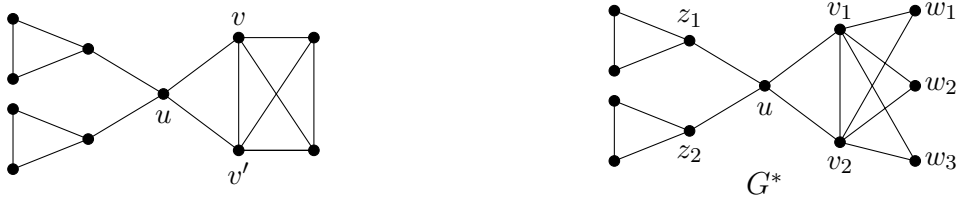
\begin{figure}
\centering
\begin{minipage}{.495\linewidth}
\centering
\begin{tikzpicture}
\filldraw [black] (0,0) circle (2pt);
\filldraw [black] (1,0.75) circle (2pt);
\filldraw [black] (1,-0.75) circle (2pt);
\filldraw [black] (2,-0.75) circle (2pt);
\filldraw [black] (2,0.75) circle (2pt);
\filldraw [black] (-1,0.6) circle (2pt);
\filldraw [black] (-1,-0.6) circle (2pt);
\filldraw [black] (-2,-1) circle (2pt);
\filldraw [black] (-2,-0.2) circle (2pt);
\filldraw [black] (-2,1) circle (2pt);
\filldraw [black] (-2,0.2) circle (2pt);
\draw  [black](1,-0.75)--(0,0)--(1,0.75)--(2,-0.75)--(1,-0.75)--(2,0.75)--(2,-0.75)--(1,-0.75);
\draw [black] (1,-0.75)--(1,0.75)--(2,0.75);
\draw[black] (0,0)--(-1,-0.6)--(-2,-0.2)--(-2,-1)--(-1,-0.6);
\draw[black] (0,0)--(-1,0.6)--(-2,0.2)--(-2,1)--(-1,0.6);
\node at (0,-0.3) {\small $u$};
\node at (1,1) {\small $v$};
\node at (1,-1.05) {\small $v'$};
\end{tikzpicture}
\end{minipage}
\begin{minipage}{.495\linewidth}
\centering
\begin{tikzpicture}
\filldraw [black] (0,0) circle (2pt);
\filldraw [black] (1,0.75) circle (2pt);
\filldraw [black] (2,0) circle (2pt);
\filldraw [black] (1,-0.75) circle (2pt);
\filldraw [black] (2,-1) circle (2pt);
\filldraw [black] (2,1) circle (2pt);
\filldraw [black] (-1,0.6) circle (2pt);
\filldraw [black] (-1,-0.6) circle (2pt);
\filldraw [black] (-2,-1) circle (2pt);
\filldraw [black] (-2,-0.2) circle (2pt);
\filldraw [black] (-2,1) circle (2pt);
\filldraw [black] (-2,0.2) circle (2pt);
\draw  [black](1,-0.75)--(0,0)--(1,0.75)--(2,1)--(1,-0.75)--(1,0.75)--(2,0)--(1,-0.75)--(2,-1)--(1,0.75);
\draw[black] (0,0)--(-1,-0.6)--(-2,-0.2)--(-2,-1)--(-1,-0.6);
\draw[black] (0,0)--(-1,0.6)--(-2,0.2)--(-2,1)--(-1,0.6);
\node at (0,-0.3) {\small $u$};
\node at (1,1) {\small $v_1$};
\node at (1,-1.05) {\small $v_2$};
\node at (2.35,1) {\small $w_1$};
\node at (2.35,0) {\small $w_2$};
\node at (2.35,-1) {\small $w_3$};
\node at (-1,0.9) {\small $z_1$};
\node at (-1,-0.9) {\small $z_2$};
\node at (0,-1.3) {$G^*$};
\end{tikzpicture}
\end{minipage}
\caption{A graph in $\mathcal{Q}_{\sqrt{3}}^2$ and the graph $G^*$ (left to right).}
\label{example}
\end{figure}

Graphs in $\mathcal{Q}_\theta^k$ are not the only graphs satisfying Theorem \ref{spec}. For instance, consider the graph $G^*$ shown in Fig. \ref{example}.
By a direct calculation, \[
\mu(G^*,x)=x^{12}-17x^{10}+97x^8-227x^6+198x^4-36x^2,
\]
\[
\mu(G^*\setminus u,x)=x^{11}-13x^9+57x^7-99x^5+54x^3,
\]
\[
\mu(G^*\setminus v_1,x)=x^{11}-12x^9+47x^7-66x^5+18x^3
\]
\[
\mu(G^*\setminus w_1,x)=x^{11}-15x^9+75x^7-149x^5+100x^3-12x
\]
and
\[
\mu(G^*\setminus z_1,x)=x^{11}-14x^9+60x^7-96x^5+55x^3-6x.
\]
Then $m(\sqrt{3},G^*)=1$, $m(\sqrt{3},G^*\setminus u)=2$, $m(\sqrt{3},G^*\setminus v_1)=2$, $m(\sqrt{3},G^*\setminus w_1)=1$ and $m(\sqrt{3},G^*\setminus z_1)=0$. So $u$ and $v_1$ are $\sqrt{3}$-positive, $w_1$ is $\sqrt{3}$-neutral and $z_1$ is $\sqrt{3}$-essential. Note that $G^*\setminus v_2\cong G^*\setminus v_1$, $G^*\setminus w_i\cong G^*\setminus w_1$ for $i=2,3$ and $G^*\setminus z_2\cong G^*\setminus z_1$. So $u$ is $\sqrt{3}$-special and $v_1$ is not $\sqrt{3}$-special. This shows that a $\theta$-positive vertex does not need to be $\theta$-special.

\section{Further results on $1$-critical graphs}

In this section, we prove Theorem \ref{1} and present a construction that yields a new $1$-critical graph from a given one.

\subsection{Existence of $1$-critical trees}

We prove the first part of Theorem \ref{1} in this subsection. The proof is constructive, relying on the construction provided in Propositions \ref{wn}, \ref{fn} and \ref{fn*}.


A pendant vertex of a graph $G$ is a vertex of degree one in $G$, and an edge incident to a pendant vertex is called a pendant edge. A quasi-pendant vertex is the neighbor of a pendant vertex.

For $n\ge 6$, let $W_n$ denote the graph obtained from the path $P_{n-2}$ by attaching a pendant vertex to each quasi-pendant vertex of $P_{n-2}$, see Fig.~\ref{WY}.
For $n\ge 4$, let $Y_n$ denote the graph obtained from the path $P_{n-1}$ by attaching a pendant vertex to a quasi-pendant vertex of $P_{n-1}$, also see Fig.~\ref{WY}.
\begin{figure}[htbp]
\centering
\begin{minipage}{.495\linewidth}
\centering
\begin{tikzpicture}
\filldraw [black] (-3.5,0) circle (2pt);
\filldraw [black] (-2.5,0) circle (2pt);
\filldraw [black] (-1.5,0) circle (2pt);
\filldraw [black] (0,0) circle (2pt);
\filldraw [black] (1,0) circle (2pt);
\filldraw [black] (2,0) circle (2pt);
\draw  [black](0,0)--(1,0)--(2,0);
\draw [black] (1,0)--(1.75,0.75);
\draw [dashed] (0,0)--(-1.5,0);
\draw [black] (-3.5,0)--(-2.5,0)--(-1.5,0);
\draw [black] (-3.25,0.75)--(-2.5,0);
\filldraw [black] (1.75,0.75) circle (2pt);
\filldraw [black] (-3.25,0.75) circle (2pt);
\node at (-0.75,-0.5) {$W_n$};
\end{tikzpicture}
\end{minipage}
\begin{minipage}{.495\linewidth}
\centering
\begin{tikzpicture}
\filldraw [black] (-3.5,0) circle (2pt);
\filldraw [black] (-2.5,0) circle (2pt);
\filldraw [black] (-1.5,0) circle (2pt);
\filldraw [black] (0,0) circle (2pt);
\filldraw [black] (1,0) circle (2pt);
\filldraw [black] (2,0) circle (2pt);
\draw  [black](0,0)--(1,0)--(2,0);
\draw [dashed] (0,0)--(-1.5,0);
\draw [black] (-3.5,0)--(-2.5,0)--(-1.5,0);
\draw [black] (-3.25,0.75)--(-2.5,0);
\filldraw [black] (-3.25,0.75) circle (2pt);
\node at (-0.75,-0.5) {$Y_n$};
\end{tikzpicture}
\end{minipage}
\caption{The graph $W_n$ and $Y_n$ (left to right).}
\label{WY}
\end{figure}
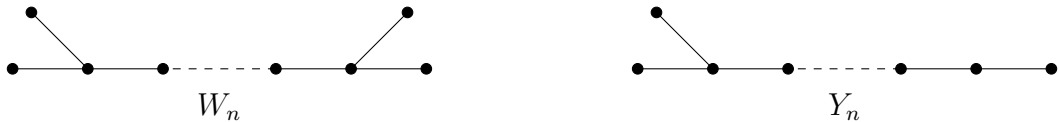

\begin{proposition}\label{wn}
For $n\ge 6$ with $n\equiv 0\pmod 3$, $W_n$ is $1$-critical.
\end{proposition}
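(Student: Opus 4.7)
The plan is to compute $\mu(W_n,x)$ and each $\mu(W_n\setminus v,x)$ via pendant-edge deletions (Proposition~\ref{rule}(ii)), reduce everything to evaluations of $\mu(P_m,x)$ at $x=1$, and then show $\mu(W_n,1)=0$ while $\mu(W_n\setminus v,1)\ne 0$ for every $v\in V(W_n)$. The conclusion that $W_n$ is $1$-critical will then follow from Lemma~\ref{interlacing}.

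First I would derive two recursions. Deleting the pendant edge at the quasi-pendant of $Y_m$ gives $\mu(Y_m,x)=x[\mu(P_{m-1},x)-\mu(P_{m-3},x)]$, and deleting one of the two pendant edges of $W_n$ gives $\mu(W_n,x)=x[\mu(Y_{n-1},x)-\mu(Y_{n-3},x)]$. Combined with the standard recurrence $\mu(P_m,x)=x\mu(P_{m-1},x)-\mu(P_{m-2},x)$, evaluation at $x=1$ yields the linear recurrence $a_m=a_{m-1}-a_{m-2}$ with $a_0=a_1=1$, where $a_m:=\mu(P_m,1)$. This sequence is periodic of period $6$ with one full period $(1,1,0,-1,-1,0)$, so $a_m=0$ iff $m\equiv 2\pmod 3$. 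Setting $b_m:=\mu(Y_m,1)=a_{m-1}-a_{m-3}$, a direct check gives the period-$6$ pattern $(b_3,b_4,\dots,b_8)=(-1,-2,-1,1,2,1)$, so $b_m\ne 0$ for all $m\ge 3$. Finally, $\mu(W_n,1)=b_{n-1}-b_{n-3}$ vanishes precisely when $n\equiv 0\pmod 3$, by inspecting residues modulo $6$, which shows $m(1,W_n)\ge 1$.

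Next I would exploit the symmetry of $W_n$ to split $V(W_n)$ into orbits under the obvious end-swapping automorphism: the pendants $u_1,u_2$; the spine endpoints $v_1,v_{n-2}$; the quasi-pendants $v_2,v_{n-3}$; and the interior spine vertices $v_i$ with $3\le i\le n-4$. A case-by-case inspection identifies $W_n\setminus v$ in each orbit: the pendant and the endpoint orbits both give $Y_{n-1}$; the quasi-pendant orbit gives $2K_1\cup Y_{n-3}$; and interior $v_i$ gives $Y_i\cup Y_{n-1-i}$, adopting the convention $Y_3:=P_3$ (consistent with the formula for $\mu(Y_m,x)$ derived above). In every case $\mu(W_n\setminus v,1)$ is a product of factors drawn from $\mu(K_1,1)=1$, $\mu(P_3,1)=-1$, and $b_m$ for various $m\ge 3$; by the previous paragraph all factors are nonzero, so $m(1,W_n\setminus v)=0$.

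The conclusion then follows: for any $v$, Lemma~\ref{interlacing} gives $m(1,W_n)\le m(1,W_n\setminus v)+1=1$, so $m(1,W_n)=1$ and each $v$ is $1$-essential, i.e., $W_n$ is $1$-critical. The main obstacle is purely bookkeeping: verifying $b_{n-1}=b_{n-3}$ for $n\equiv 0\pmod 3$ and $b_m\ne 0$ in every residue class modulo $6$ reduces to checking a short finite list of values, but care is needed for the boundary subcases $i=3$ and $i=n-4$ of the interior-vertex case, where one of the components is $P_3$ rather than an ordinary $Y_m$.
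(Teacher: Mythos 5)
Your proposal is correct and follows essentially the same route as the paper: pendant-edge deletions reduce $\mu(W_n,x)$ and $\mu(Y_m,x)$ to evaluations at $x=1$, one checks that $\mu(Y_m,1)\ne 0$ for all $m\ge 3$ while $\mu(W_n,1)=0$ exactly when $n\equiv 0\pmod 3$, and a case analysis of $W_n\setminus v$ shows every vertex is $1$-essential. The only cosmetic differences are that you obtain the values $\mu(Y_m,1)$ from the period-$6$ recurrence for $\mu(P_m,1)$ rather than from the paper's identity $\mu(Y_m,1)=-\mu(Y_{m-3},1)$, and you close with interlacing (Lemma~\ref{interlacing}) where the paper invokes the $\theta$-Gallai theorem (Theorem~\ref{essential}); both give $m(1,W_n)=1$.
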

\begin{proof}
If $n=6$, then it can be obtained by direct calculation that $\mu(W_n,x)=x^6-5x^4+ 4x^2$, so $1$ is a root of $\mu(W_n,x)$ with multiplicity one. Moreoever, it can be verified that each vertex of $W_6$ is $1$-essential. Therefore, $W_6$ is $1$-critical. Assume in the following that $n\ge 7$.

Let $e=uv$ be a pendant edge in $W_n$ with $u$ being the quasi-pendant vertex. It's obvious that $G-e\cong K_1\cup Y_{n-1}$ and $G\setminus uv\cong K_1\cup Y_{n-3}$. 
By Theorem~\ref{rule}, we have 
\begin{equation}\label{eqwn}
\mu(W_n,x)=x\mu(Y_{n-1},x)-x\mu(Y_{n-3},x).
\end{equation}
Let $u'$ be a neighbor of $u$ that is not pendant in $W_n$ and $f=uu'$. Note that $G-f\cong Y_{n-3}\cup P_3$ and $G\setminus uu'\cong 2K_1\cup Y_{n-4}$. As $\mu(P_3,x)=x(x^2-2)$, we have by Theorem~\ref{rule} again that \[
\mu(W_n,x)=x(x^2-2)\mu(Y_{n-3},x)-x^2\mu(Y_{n-4},x).
\]
From Eq.~\eqref{eqwn}, $x\mu(Y_{n-3},x)=x\mu(Y_{n-1},x)-\mu(W_n,x)$ and so \[
\mu(W_n,x)=(x^2-2)(x\mu(Y_{n-1},x)-\mu(W_n,x))-x^2\mu(Y_{n-4},x).
\]
This shows that 
\[
\mu(W_n,1)=-(\mu(Y_{n-1},1)-\mu(W_n,1))-\mu(Y_{n-4},1)=\mu(W_n,1)-\mu(Y_{n-1},1)-\mu(Y_{n-4},1),
\]
i.e., 
\begin{equation}\label{eqyn}
\mu(Y_n,1)=-\mu(Y_{n-3},1).
\end{equation}

By an easy calculation, we have \[
\mu(Y_5,x)=x^5 - 4x^3 + 2x, 
\]
\[
\mu(Y_6,x)=x^6 - 5x^4 + 5x^2
\]
and \[
\mu(Y_7,x)=x^7 - 6x^5 + 9x^3 - 2x.
\]
It then follows that $\mu(Y_5,1)=-1$, $\mu(Y_6,1)=1$ and $\mu(Y_7,1)=2$. So by Eq.~\eqref{eqyn}, 
\begin{equation}\label{eqynn}
\mu(Y_n,1)=\begin{cases}
(-1)^{\frac{n-1}{3}}\cdot 2&\mbox{ if }n\equiv 1\pmod 3,\\
(-1)^{\lfloor\frac{n}{3}\rfloor}&\mbox{ otherwise.}
\end{cases}
\end{equation}
This shows that $\mu(Y_n,1)\ne 0$ for any $n\ge 5$, i.e., $1$ is not a root of $\mu(Y_n,x)$.
From Eq.~\eqref{eqwn} again, we have 
\begin{equation}\label{eqwnn}
\mu(W_n,1)=\mu(Y_{n-1},1)-\mu(Y_{n-3},1)=\begin{cases}
0&\mbox{ if }n\equiv 0\pmod 3,\\
(-1)^{\frac{n+2}{3}}&\mbox{ if }n\equiv 1\pmod 3,\\
(-1)^{\frac{n-2}{3}}\cdot 3&\mbox{ if }n\equiv 2\pmod 3.
\end{cases}
\end{equation}
Note that for any $w\in V(W_n)$, $W_n\setminus w$ consists of either $P_t$ with $t=1,3$ or $Y_s$ with $4\le s\le n-1$. Recall that $1$ is not a root of $\mu(Y_s,x)$. So any vertex of $W_n$ is $1$-essential if $n\equiv 0\pmod 3$. By Theorem \ref{essential}, $m(1,W_n)=1$ if $n\equiv 0\pmod 3$. Therefore, $W_n$ is $1$-critical if $n\equiv 0\pmod 3$.
\end{proof}

For convenience, we assume that $P_3=Y_3$.
By an easy calculation, $\mu(Y_3,x)=x^3-2x$ and $\mu(Y_4,x)=x^4-3x$, satisfying Eq.~\eqref{eqynn}. So we have the following result.

\begin{lemma}\label{y}
For $n\ge 3$, $\mu(Y_n,1)=\begin{cases}
(-1)^{\frac{n-1}{3}}\cdot 2&\mbox{ if }n\equiv 1\pmod 3,\\
(-1)^{\lfloor\frac{n}{3}\rfloor}&\mbox{ otherwise.}
\end{cases}$ 
\end{lemma}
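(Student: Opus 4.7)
The plan is to leverage the formula for $\mu(Y_n,1)$ that was already derived inside the proof of Proposition~\ref{wn}, and only to supply the additional base cases needed to bring the range of $n$ down from $n \ge 5$ to $n \ge 3$. Concretely, Eq.~\eqref{eqynn} has already established the claimed expression for every $n \ge 5$, so the proof reduces to a direct computation of $\mu(Y_3,1)$ and $\mu(Y_4,1)$ and a check that each matches the piecewise formula.

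First I would handle $n = 3$. Since $Y_3 = P_3$ by convention, one computes $\mu(Y_3,x) = x^3 - 2x$ directly from the definition (or from Proposition~\ref{rule}(iii)), giving $\mu(Y_3,1) = -1$. Because $3 \equiv 0 \pmod 3$, the lemma predicts $(-1)^{\lfloor 3/3\rfloor} = -1$, in agreement. Next I would handle $n = 4$. Here $Y_4$ is the graph obtained from $P_3$ by attaching a pendant vertex at the central (quasi-pendant) vertex, i.e.\ the star $K_{1,3}$; a direct calculation yields $\mu(Y_4,x) = x^4 - 3x^2$, so $\mu(Y_4,1) = -2$. Since $4 \equiv 1 \pmod 3$, the lemma predicts $(-1)^{(4-1)/3}\cdot 2 = -2$, again matching.

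Combining these two base-case verifications with Eq.~\eqref{eqynn} covers every $n \ge 3$, finishing the lemma. There is no real obstacle here; the content of Lemma~\ref{y} is essentially a repackaging of the recursive computation already carried out for $W_n$, extended by two trivial evaluations, so that later uses of the $\mu(Y_n,1)$ values can cite a single statement with the correct $n \ge 3$ range.
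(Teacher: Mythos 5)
Your proposal is correct and follows exactly the paper's own route: the paper likewise justifies the lemma by citing Eq.~\eqref{eqynn} for $n\ge 5$ and verifying the two base cases $Y_3=P_3$ and $Y_4=K_{1,3}$ by direct computation. (You even correct a small typo in the paper, which writes $\mu(Y_4,x)=x^4-3x$ where it should be $x^4-3x^2$; your value $\mu(Y_4,1)=-2$ is the right one.)
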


For $n\ge 7$, let $R_n$ denote the graph obtained from the path $P_{n-3}:=v_0v_1\dots v_{n-4}$ by attaching a pendant vertex to each vertex in $\{v_1,v_2,v_3\}$, see Fig.~\ref{figrn}.
For $n\ge 8$, let $R_n^*$ denote the graph obtained from $P_{n-4}:=v_0v_1\dots v_{n-5}$ by attaching two pendant vertices to each vertex in $\{v_1,v_2\}$, also see Fig. \ref{figrn}.

\begin{figure}[htbp]
\centering
\begin{minipage}{.495\linewidth}
\centering
\begin{tikzpicture}
\filldraw [black] (-5.5,0) circle (2pt);
\filldraw [black] (-4.5,0) circle (2pt);
\filldraw [black] (-3.5,0) circle (2pt);
\filldraw [black] (-2.5,0) circle (2pt);
\filldraw [black] (-1.5,0) circle (2pt);
\filldraw [black] (0,0) circle (2pt);
\filldraw [black] (1,0) circle (2pt);
\draw  [black](0,0)--(1,0);
\draw [dashed] (0,0)--(-1.5,0);
\draw [black] (-5.5,0)--(-4.5,0)--(-3.5,0)--(-2.5,0)--(-1.5,0);
\draw [black] (-5.25,0.75)--(-4.5,0);
\filldraw [black] (-3.5,1) circle (2pt);
\filldraw [black] (-5.25,0.75) circle (2pt);
\filldraw [black] (-2.5,1) circle (2pt);
\draw [black] (-3.5,0)--(-3.5,1);
\draw [black] (-2.5,0)--(-2.5,1);
\node at (-4.5,-0.35) {\small $v_1$};
\node at (-3.5,-0.35) {\small $v_2$};
\node at (-2.5,-0.35) {\small $v_3$};
\node at (-1.2,-0.7) {$R_n$};
\end{tikzpicture}
\end{minipage}
\begin{minipage}{.495\linewidth}
\centering
\begin{tikzpicture}
\filldraw [black] (-4.5,0) circle (2pt);
\filldraw [black] (-3.5,0) circle (2pt);
\filldraw [black] (-2.5,0) circle (2pt);
\filldraw [black] (-1.5,0) circle (2pt);
\filldraw [black] (0,0) circle (2pt);
\filldraw [black] (1,0) circle (2pt);
\draw  [black](0,0)--(1,0);
\draw [dashed] (0,0)--(-1.5,0);
\draw [black] (-4.5,0)--(-3.5,0)--(-2.5,0)--(-1.5,0);
\draw [black] (-4.25,0.75)--(-3.5,0)--(-4.25,-0.75);
\filldraw [black] (-3.25,0.75) circle (2pt);
\filldraw [black] (-1.75,0.75) circle (2pt);
\filldraw [black] (-4.25,0.75) circle (2pt);
\filldraw [black] (-4.25,-0.75) circle (2pt);
\draw [black] (-3.25,0.75)--(-2.5,0)--(-1.75,0.75);
\node at (-3.5,-0.35) {\small $v_1$};
\node at (-2.5,-0.35) {\small $v_2$};
\node at (-1,-0.7) {$R_n^*$};
\end{tikzpicture}
\end{minipage}
\caption{The graphs $R_n$ and $R_n^*$ (left to right).}
\label{figrn}
\end{figure}
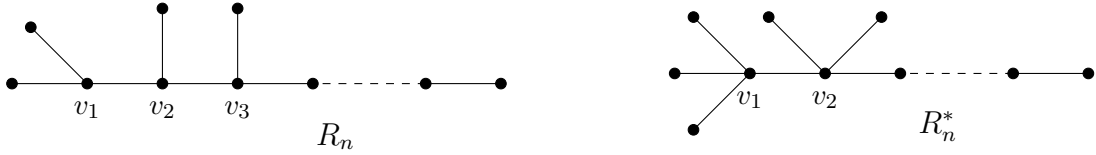

\begin{lemma}\label{rn}
(i) For $n\ge 7$, $\mu(R_n,1)=\begin{cases}
(-1)^{\frac{n-2}{3}}\cdot 2&\mbox{ if }n\equiv 2\pmod 3,\\
(-1)^{\lceil\frac{n}{3}\rceil+1}&\mbox{ otherwise.}
\end{cases}$ \\
(ii) For $n\ge 8$, $\mu(R_n^*,1)=\begin{cases}
(-1)^{\frac{n-3}{3}}\cdot 2&\mbox{ if }n\equiv 0\pmod 3,\\
(-1)^{\frac{n-1}{3}}&\mbox{ if }n\equiv 1\pmod 3,\\
(-1)^{\frac{n-2}{3}}\cdot 3&\mbox{ if }n\equiv 2\pmod 3.
\end{cases}$
\end{lemma}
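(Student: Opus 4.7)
The plan is to prove both parts by induction on $n$, using a two-term linear recurrence obtained by applying Proposition~\ref{rule}(iii) at the leaf at the far end of the path.

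For part (i), fix $n\ge 9$. The vertex $v_{n-4}$ is a leaf of $R_n$ with unique neighbor $v_{n-5}$, and since $n-5\ge 4$, neither $v_{n-4}$ nor $v_{n-5}$ lies in the pendant structure attached to $\{v_1,v_2,v_3\}$. Directly from the definition of $R_n$ one verifies the isomorphisms $R_n\setminus v_{n-4}\cong R_{n-1}$ and $R_n\setminus v_{n-4}v_{n-5}\cong R_{n-2}$ (both operations only shorten the tail of the path). Hence Proposition~\ref{rule}(iii) gives
\[
\mu(R_n,x)=x\,\mu(R_{n-1},x)-\mu(R_{n-2},x)\qquad(n\ge 9),
\]
and setting $x=1$ yields $\mu(R_n,1)=\mu(R_{n-1},1)-\mu(R_{n-2},1)$. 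Any solution of this recurrence satisfies $a_n=-a_{n-3}$, which is exactly the period-$3$ sign-flip encoded in the claimed formula; conversely, a direct check confirms that the formula itself satisfies the two-term recurrence. Thus it is enough to verify the base cases $n=7,8$.

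A short enumeration of matchings gives $\mu(R_7,x)=x^7-6x^5+8x^3-2x$, hence $\mu(R_7,1)=1$. For $n=8$, applying Proposition~\ref{rule}(iii) at the leaf $v_4$ of $R_8$ and using $R_8\setminus v_4\cong R_7$ together with $R_8\setminus v_4v_3\cong Y_5\cup K_1$ gives
\[
\mu(R_8,1)=\mu(R_7,1)-\mu(Y_5,1)=1-(-1)=2
\]
by Lemma~\ref{y}. Both base values match the formula, completing the induction for part (i).

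For part (ii) the argument is entirely parallel. For $n\ge 10$ one checks $R_n^*\setminus v_{n-5}\cong R_{n-1}^*$ and $R_n^*\setminus v_{n-5}v_{n-6}\cong R_{n-2}^*$, so Proposition~\ref{rule}(iii) at the leaf $v_{n-5}$ yields
\[
\mu(R_n^*,1)=\mu(R_{n-1}^*,1)-\mu(R_{n-2}^*,1)\qquad(n\ge 10).
\]
Again this reduces everything to two base cases. The value $\mu(R_8^*,1)=3$ is computed directly, and $\mu(R_9^*,1)=2$ follows from $\mu(R_8^*,1)$ by Proposition~\ref{rule}(iii) at the leaf $v_4$ of $R_9^*$, using $R_9^*\setminus v_4\cong R_8^*$ and observing that $R_9^*\setminus v_4v_3$ is a small graph whose matching polynomial at $1$ is easy to evaluate. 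The only real obstacle is the bookkeeping for the small-$n$ graph isomorphisms and the handful of base-case matching enumerations; the inductive step is a mechanical case analysis modulo $3$, tracking signs.
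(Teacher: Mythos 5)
Your proposal is correct and follows essentially the same route as the paper: both establish the two-term recurrence $\mu(R_n,1)=\mu(R_{n-1},1)-\mu(R_{n-2},1)$ by removing the pendant edge at the tail of the path (the paper via Proposition~\ref{rule}(ii), you via (iii) at the leaf, which is the same identity here) and then anchor it at the base cases $n=7,8$; your observation that the recurrence forces $a_n=-a_{n-3}$ is a slightly cleaner way to dispatch the mod-$3$ case check, and you supply the parallel details for part (ii) that the paper declares "similar and omitted."
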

\begin{proof}
We only prove (i), and the proof of (ii) is similar and thus omitted.
Using the notation introduced above, we prove the result by induction. 
By calculation, \[
\mu(R_7,x)=x^7-6x^5+8x^3-2x
\]
and 
\[
\mu(R_8,x)=x^8 - 7x^6 + 12x^4 - 4x^2.
\]
Then $\mu(R_7,1)=1$ and $\mu(R_8,1)=2$.  Assume in the following that $n\ge 9$ and statement holds for any $R_t$ with $7\le t<n$.

As $R_n-v_{n-5}v_{n-4}\cong K_1\cup R_{n-1}$ and $R_n\setminus v_{n-5}v_{n-4}\cong R_{n-2}$, we have by Proposition~\ref{rule} that \[
\mu(R_n,x)=x\mu(R_{n-1},x)-\mu(R_{n-2},x)
\]
and so
\[
\mu(R_n,1)=\mu(R_{n-1},1)-\mu(R_{n-2},1)=\begin{cases}
(-1)^{\frac{n-3}{3}}\cdot 2-(-1)^{\frac{n}{3}+1}=(-1)^{\frac{n}{3}+1}&\mbox{ if }n\equiv 0\pmod 3,\\
(-1)^{\frac{n-1}{3}+1}-(-1)^{\frac{n-4}{3}}\cdot 2=(-1)^{\frac{n+2}{3}+1}&\mbox{ if }n\equiv 1\pmod 3,\\
(-1)^{\frac{n+1}{3}+1}-(-1)^{\frac{n-2}{3}+1}=(-1)^{\frac{n-2}{3}}\cdot 2&\mbox{ if }n\equiv 2\pmod 3,\\
\end{cases}
\]
as desired.
%
%
\end{proof}

For $n\ge 10$, let $F_{n}$ denote the graph obtained from the path $P_{n-4}:=v_0v_1\dots v_{n-5}$ by attaching a pendant vertex to each vertex in $\{v_1,v_2,v_{3},v_{n-6}\}$, see Fig.~\ref{figfn}.
For $n\ge 11$, let $F_n^*$ denote the graph obtained from the path $P_{n-6}:=v_0v_1\dots v_{n-7}$ by attaching two pendant vertices to each vertex in $\{v_1,v_2,v_{n-8}\}$, also see Fig. \ref{figfn}.

\begin{figure}[htbp]
\centering
\begin{minipage}{.495\linewidth}
\centering
\begin{tikzpicture}
\filldraw [black] (-5.5,0) circle (2pt);
\filldraw [black] (-4.5,0) circle (2pt);
\filldraw [black] (-3.5,0) circle (2pt);
\filldraw [black] (-2.5,0) circle (2pt);
\filldraw [black] (-1.5,0) circle (2pt);
\filldraw [black] (0,0) circle (2pt);
\filldraw [black] (1,0) circle (2pt);
\filldraw [black] (2,0) circle (2pt);
\draw  [black](0,0)--(1,0)--(2,0);
\draw [black] (1,0)--(1.75,0.75);
\draw [dashed] (0,0)--(-1.5,0);
\draw [black] (-5.5,0)--(-4.5,0)--(-3.5,0)--(-2.5,0)--(-1.5,0);
\draw [black] (-5.25,0.75)--(-4.5,0);
\filldraw [black] (-3.5,1) circle (2pt);
\filldraw [black] (1.75,0.75) circle (2pt);
\filldraw [black] (-5.25,0.75) circle (2pt);
\filldraw [black] (-2.5,1) circle (2pt);
\draw [black] (-3.5,0)--(-3.5,1);
\draw [black] (-2.5,0)--(-2.5,1);
\node at (-4.5,-0.35) {\small $v_1$};
\node at (-3.5,-0.35) {\small $v_2$};
\node at (-2.5,-0.35) {\small $v_3$};
\node at (1,-0.35) {\small $v_{n-6}$};
\node at (-1,-0.7) {$F_n$};
\end{tikzpicture}
\end{minipage}
\begin{minipage}{.495\linewidth}
\centering
\begin{tikzpicture}
\filldraw [black] (-4.5,0) circle (2pt);
\filldraw [black] (-3.5,0) circle (2pt);
\filldraw [black] (-2.5,0) circle (2pt);
\filldraw [black] (-1.5,0) circle (2pt);
\filldraw [black] (0,0) circle (2pt);
\filldraw [black] (1,0) circle (2pt);
\filldraw [black] (2,0) circle (2pt);
\draw  [black](0,0)--(1,0)--(2,0);
\draw [black] (1.75,-0.75)--(1,0)--(1.75,0.75);
\draw [dashed] (0,0)--(-1.5,0);
\draw [black] (-4.5,0)--(-3.5,0)--(-2.5,0)--(-1.5,0);
\draw [black] (-4.25,0.75)--(-3.5,0)--(-4.25,-0.75);
\filldraw [black] (-3.25,0.75) circle (2pt);
\filldraw [black] (-1.75,0.75) circle (2pt);
\filldraw [black] (1.75,0.75) circle (2pt);
\filldraw [black] (1.75,-0.75) circle (2pt);
\filldraw [black] (-4.25,0.75) circle (2pt);
\filldraw [black] (-4.25,-0.75) circle (2pt);
\draw [black] (-3.25,0.75)--(-2.5,0)--(-1.75,0.75);
\node at (-3.5,-0.35) {\small $v_1$};
\node at (-2.5,-0.35) {\small $v_2$};
\node at (0.9,-0.35) {\small $v_{n-8}$};
\node at (-1,-0.7) {$F_n^*$};
\end{tikzpicture}
\end{minipage}
\caption{The graphs $F_n$ and $F_n^*$ (left to right).}
\label{figfn}
\end{figure}
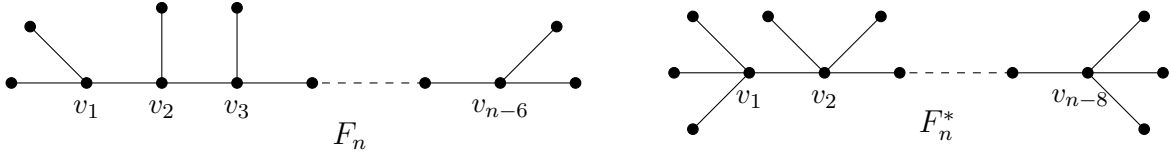 

\begin{proposition}\label{fn}
For $n\ge 10$ with $n\equiv 1\pmod 3$, $F_n$ is $1$-critical.
\end{proposition}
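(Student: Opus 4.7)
The plan is to verify the two defining conditions of $1$-criticality: that $\mu(F_n,1)=0$ and that every vertex of $F_n$ is $1$-essential. Once every vertex is $1$-essential, Theorem~\ref{essential} forces $m(1,F_n)=1$, and combined with $\mu(F_n,1)=0$ this yields the conclusion.

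The first task is handled by the pendant-edge reduction at the pendant $u$ attached to $v_{n-6}$. Since $u$'s unique neighbor is $v_{n-6}$, Proposition~\ref{rule}(iii) gives
\[
\mu(F_n,x)=x\,\mu(F_n\setminus u,x)-\mu(F_n\setminus\{u,v_{n-6}\},x).
\]
A direct inspection identifies $F_n\setminus u\cong R_{n-1}$ (removing only the far pendant leaves the three-pendant $R$-structure on the path $v_0\dots v_{n-5}$) and $F_n\setminus\{u,v_{n-6}\}\cong K_1\cup R_{n-3}$ (further deleting $v_{n-6}$ isolates $v_{n-5}$ and leaves $R_{n-3}$ on $v_0,\dots,v_{n-7}$). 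Hence
\[
\mu(F_n,x)=x\bigl(\mu(R_{n-1},x)-\mu(R_{n-3},x)\bigr).
\]
For $n\equiv 1\pmod 3$ we have $n-1\equiv 0$ and $n-3\equiv 1\pmod 3$, so Lemma~\ref{rn}(i) gives $\mu(R_{n-1},1)=\mu(R_{n-3},1)=(-1)^{(n-1)/3+1}$, and the difference vanishes.

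For the second task I would split $V(F_n)$ into four groups and analyze $F_n\setminus w$ in each. If $w\in\{u,v_{n-5}\}$ then $F_n\setminus w\cong R_{n-1}$; if $w=v_{n-6}$ then $F_n\setminus w\cong 2K_1\cup R_{n-3}$; if $w=v_i$ with $4\le i\le n-7$ then $F_n\setminus v_i$ splits into $R_{i+3}\cup Y_{n-4-i}$. In each of these cases the value at $1$ is nonzero by Lemmas~\ref{rn}(i) and \ref{y}. The remaining group is $w\in\{v_0,v_1,v_2,v_3,u_1,u_2,u_3\}$, the vertices on the ``$R$-side'' of $F_n$. For these, applying Proposition~\ref{rule}(iii) at $u$ to $F_n\setminus w$ (whose structure near $u$ is unaffected by $w$) yields
\[
\mu(F_n\setminus w,x)=x\bigl(\mu(R_{n-1}\setminus w,x)-\mu(R_{n-3}\setminus w,x)\bigr),
\]
and the same reduction applied at the far end of $R_m$ gives the three-term recurrence $\mu(R_m\setminus w,x)=x\mu(R_{m-1}\setminus w,x)-\mu(R_{m-2}\setminus w,x)$, valid once the deleted edge is disjoint from $w$ (i.e. for $m\ge 9$). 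Setting $x=1$ produces a linear recurrence whose characteristic roots are $e^{\pm i\pi/3}$, so the integer sequence $a_m:=\mu(R_m\setminus w,1)$ has period $6$ and is determined by the base cases $a_7,a_8$, each of which is a matching-enumeration on a graph of at most $8$ vertices.

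The main obstacle is this last group: one has to carry out the seven small base-case computations $(a_7,a_8)$ (several of which exhibit $a_7=0$, coming from accidental cancellations in $R_7\setminus w$) and then verify that the resulting periodic expression $\mu(R_{n-1}\setminus w,1)-\mu(R_{n-3}\setminus w,1)$ is nonzero for every $n\equiv 1\pmod 3$ with $n\ge 10$. Each individual computation is short but the bookkeeping across all seven cases is the most labor-intensive step. Once all four groups are handled, every vertex of $F_n$ is $1$-essential; Theorem~\ref{essential} then yields $m(1,F_n)=1$, and together with $\mu(F_n,1)=0$ this proves $F_n$ is $1$-critical.
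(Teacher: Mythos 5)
Your proposal is correct, and its skeleton matches the paper's: establish $\mu(F_n,1)=0$ via the pendant reduction at $v_{n-6}$ together with Lemma~\ref{rn}, then show every vertex is $1$-essential and invoke Theorem~\ref{essential}. The identifications $F_n\setminus u\cong R_{n-1}$, $F_n\setminus uv_{n-6}\cong K_1\cup R_{n-3}$, and the decompositions for $v_{n-6}$ and for $v_i$ with $4\le i\le n-7$ all check out. Where you genuinely diverge is in the treatment of the seven ``left-side'' vertices $v_0,v_1,v_2,v_3,u_1,u_2,u_3$: the paper handles each by a bespoke edge deletion that splits $F_n\setminus w$ into pieces from the families $W$, $Y$, $Y^*$ and $P$ whose values at $1$ are known from separate lemmas, whereas you push the pendant reduction at $u$ through to get $\mu(F_n\setminus w,1)=a_{n-1}-a_{n-3}$ with $a_m=\mu(R_m\setminus w,1)$ satisfying $a_m=a_{m-1}-a_{m-2}$ for $m\ge 9$, and then exploit the period-$6$ behaviour. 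This is more systematic and self-contained (it needs no auxiliary families beyond $R_m$ and the two base cases $a_7,a_8$ per vertex, and it transparently covers $u_1$ and $v_{n-5}$, which the paper's case list treats only implicitly); the price is seven small hand computations that you defer. For the record, those computations do work out: the periodicity reduces everything to $a_{n-1}-a_{n-3}=\pm(2a_7-a_8)$ for $n\equiv 1\pmod 3$, and one finds $(a_7,a_8)\in\{(0,1),(-1,-1),(1,-1)\}$ across the seven vertices, so $2a_7-a_8\in\{-1,3\}$ is never zero. Since you state precisely what must be checked and the checks succeed, there is no gap, only deferred routine verification.
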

\begin{proof}
Let $u$ denote a pendant neighbor of $v_{n-6}$. Note that $F_n-uv_{n-6}\cong K_1\cup R_{n-1}$ and $F_n\setminus uv_{n-6}\cong K_1\cup R_{n-3}$.
By Proposition \ref{rule}, we have \[
\mu(F_n,x)=x\mu(R_{n-1},x)-x\mu(R_{n-3},x).
\]
As $n\equiv 1\pmod 3$, we have by Lemma \ref{rn} that
\[
\mu(F_n,1)=\mu(R_{n-1},1)-\mu(R_{n-3},1)=(-1)^{\frac{n-1}{3}+1}-(-1)^{\frac{n-1}{3}+1}=0.
\]
This shows that $1$ is a root of $\mu(F_n,x)$.

As $F_n\setminus u\cong R_{n-1}$ and $F_n\setminus v_{n-6}\cong 2K_1\cup R_{n-3}$, we have by Proposition~\ref{rule} and Lemma \ref{rn} that $\mu(F_n\setminus u,1)\ne 0$ and $\mu(F_n\setminus v_{n-6},1)\ne 0$. Note that $F_n\setminus v_2\cong K_1\cup Y_3\cup Y_{n-5}$, $F_n\setminus v_3\cong K_1\cup Y_5\cup Y_{n-7}$, $F_n\setminus v_i\cong R_{3+i}\cup Y_{n-4-i}$ with $4\le i\le n-8$ and $F_n\setminus v_{n-7}\cong R_{n-4}\cup Y_3$. Then by Lemmas \ref{y} and \ref{rn}, $\mu(F_n\setminus v_i,1)\ne 0$ for $i=2,\dots,n-7$. 

Let $u_i$ denote the pendant neighbor of $v_i$ in $F_n$ and $H_i=F_n\setminus u_i$ for $i=2,3$. As $H_i-v_{5-i}u_{5-i}\cong K_1\cup W_{n-2}$,  $H_2\setminus v_3u_3\cong Y_4\cup Y_{n-7}$ and $H_3\setminus v_2u_2\cong Y_3\cup Y_{n-5}$, we have by Eq.~\eqref{eqwnn} and Proposition \ref{rule} and Lemma \ref{y} that $\mu(H_i,1)\ne 0$.
Let $H_i=F_n\setminus v_i$ for $i=0,1$. As $H_0-v_2v_3\cong P_4\cup Y_{n-5}$, $H_0\setminus v_2v_3\cong 2K_1\cup K_2\cup Y_{n-7}$, $H_1-v_3u_3\cong 3K_1\cup Y_{n-4}$ and $H_1\setminus v_3u_3\cong 2K_1\cup K_2\cup Y_{n-7}$, it also follows from Proposition \ref{rule}, Lemma \ref{y} and the fact $\mu(K_2,1)=0$ and  $\mu(P_4,1)=-1$ that $\mu(H_i,1)=\mu(Y_{n-4},x)\ne 0$. This proves that each vertex of $F_n$ is $1$-essential. Therefore, by Theorem \ref{essential}, $F_n$ is $1$-critical.
\end{proof}

For $n\ge 4$, let $Y_n^*$ denote the graph obtained from the path $P_{n-2}:=v_0\dots v_{n-3}$ by adding two pendant vertices to $v_1$, see Fig. \ref{figyw}. Using induction and similar argument as in the proof of Lemma \ref{rn}, we derive the following result, whose proof is omitted.

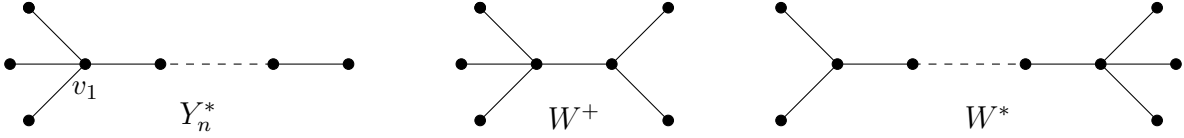
\begin{figure}[htbp]
\centering
\begin{tikzpicture}
\filldraw [black] (-9.5,0) circle (2pt);
\filldraw [black] (-8.5,0) circle (2pt);
\filldraw [black] (-7.5,0) circle (2pt);
\filldraw [black] (-6,0) circle (2pt);
\filldraw [black] (-5,0) circle (2pt);
\draw  [black](-6,0)--(-5,0);
\draw [dashed] (-6,0)--(-7.5,0);
\draw [black] (-9.5,0)--(-8.5,0)--(-7.5,0);
\draw [black] (-9.25,0.75)--(-8.5,0)--(-9.25,-0.75);
\filldraw [black] (-9.25,0.75) circle (2pt);
\filldraw [black] (-9.25,0.75) circle (2pt);
\filldraw [black] (-9.25,-0.75) circle (2pt);
\node at (-8.5,-0.35) {\small $v_1$};
\node at (-7,-0.7) {$Y_n^*$};
\filldraw [black] (-3.5,0) circle (2pt);
\filldraw [black] (-2.5,0) circle (2pt);
\filldraw [black] (-1.5,0) circle (2pt);
\draw [black] (-3.5,0)--(-2.5,0)--(-1.5,0)--(-0.75,0.75);
\draw [black] (-0.75,-0.75)--(-1.5,0);
\draw [black] (-3.25,0.75)--(-2.5,0)--(-3.25,-0.75);
\filldraw [black] (-3.25,0.75) circle (2pt);
\filldraw [black] (-3.25,0.75) circle (2pt);
\filldraw [black] (-3.25,-0.75) circle (2pt);
\filldraw [black] (-0.75,0.75) circle (2pt);
\filldraw [black] (-0.75,-0.75) circle (2pt);
\node at (-2,-0.7) {$W^+$};
\filldraw [black] (0.75,0.75) circle (2pt);
\filldraw [black] (0.75,-0.75) circle (2pt);
\draw [black] (0.75,0.75)--(1.5,0)--(0.75,-0.75);
\filldraw [black] (1.5,0) circle (2pt);
\filldraw [black] (2.5,0) circle (2pt);
\draw [black] (1.5,0)--(2.5,0);
\draw [dashed] (2.5,0)--(4,0);
\filldraw [black] (4,0) circle (2pt);
\filldraw [black] (5,0) circle (2pt);
\filldraw [black] (6,0) circle (2pt);
\draw [black] (4,0)--(5,0)-- (6,0);
\filldraw [black] (5.75,0.75) circle (2pt);
\filldraw [black] (5.75,-0.75) circle (2pt);
\draw [black] (5.75,0.75) --(5,0)--(5.75,-0.75);
\node at (3.5,-0.7) {$W^*$};
\end{tikzpicture}
\caption{The graphs $Y_n^*$, $W^+$ and $W^*$ (left to right).}
\label{figyw}
\end{figure}

\begin{lemma}\label{yn*}
For $n\ge 4$, $\mu(Y_n^*,1)=\begin{cases}
(-1)^{\frac{n-3}{3}}&\mbox{ if }n\equiv 0\pmod 3,\\
(-1)^{\frac{n-1}{3}}\cdot 2&\mbox{ if }n\equiv 1\pmod 3,\\
(-1)^{\frac{n-2}{3}}\cdot 3&\mbox{ if }n\equiv 2\pmod 3.
\end{cases}$
\end{lemma}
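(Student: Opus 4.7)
The plan is to mirror the strategy used for $R_n$ in Lemma~\ref{rn}: establish a short linear recurrence for $\mu(Y_n^*,x)$ by a pendant edge-deletion and then verify the closed form by induction on $n$. First I would dispose of the base cases $n=4$ and $n=5$ directly: since $Y_4^*\cong K_{1,3}$ and $Y_5^*\cong K_{1,4}$, their matching polynomials are $x^4-3x^2$ and $x^5-4x^3$, giving $\mu(Y_4^*,1)=-2$ and $\mu(Y_5^*,1)=-3$, matching the formula in the residue classes $n\equiv 1$ and $n\equiv 2\pmod 3$.

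Next, for $n\ge 6$ I would apply Proposition~\ref{rule}(ii) to the edge $e=v_{n-4}v_{n-3}$ at the far end of the path in $Y_n^*$. Since $v_{n-3}$ has degree one, $Y_n^*-e\cong K_1\cup Y_{n-1}^*$, while $Y_n^*\setminus v_{n-4}v_{n-3}\cong Y_{n-2}^*$ (the path is shortened by two and the two pendants at $v_1$ remain, using $n-4\ge 2$). Combined with Proposition~\ref{rule}(i), this yields
\[
\mu(Y_n^*,x)=x\,\mu(Y_{n-1}^*,x)-\mu(Y_{n-2}^*,x),
\]
so at $x=1$ we obtain the numerical recurrence $\mu(Y_n^*,1)=\mu(Y_{n-1}^*,1)-\mu(Y_{n-2}^*,1)$.

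The claimed closed form then follows by induction with a case split on $n\bmod 3$: in each case $n-1$ and $n-2$ lie in the other two residue classes, and substituting the inductive hypothesis reduces the verification to a one-line identity such as $3\cdot(-1)^{(n-3)/3}-2\cdot(-1)^{(n-3)/3}=(-1)^{(n-3)/3}$ when $n\equiv 0\pmod 3$, with entirely analogous identities in the other two cases after tracking how the exponent shifts by one each time $n$ grows by $3$. I do not anticipate a real obstacle here; the only delicate point is that the identification $Y_n^*\setminus v_{n-4}v_{n-3}\cong Y_{n-2}^*$ only makes sense once $n\ge 6$, which is precisely why $n=4$ and $n=5$ must be treated separately as base cases.
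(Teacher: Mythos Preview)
Your proposal is correct and follows exactly the route the paper indicates: the paper omits the proof of this lemma, saying only that it proceeds by induction via the same argument as Lemma~\ref{rn}, and your pendant-edge recurrence $\mu(Y_n^*,x)=x\,\mu(Y_{n-1}^*,x)-\mu(Y_{n-2}^*,x)$ for $n\ge 6$ together with the base cases $n=4,5$ is precisely that argument carried out in full.
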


\begin{proposition}\label{fn*}
For $n\ge 11$ with $n\equiv 2\pmod 3$, $F_n^*$ is $1$-critical.
\end{proposition}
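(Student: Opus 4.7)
The plan is to follow the three-step template of Proposition~\ref{fn}: first show $\mu(F_n^*,1)=0$, then show every vertex of $F_n^*$ is $1$-essential, and finally invoke Theorem~\ref{essential} to conclude $m(1,F_n^*)=1$ and hence that $F_n^*$ is $1$-critical.

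For the first step, let $u_1,u_2$ be the two pendants attached to $v_{n-8}$. Applying Proposition~\ref{rule}(iii) to $u_1$ in $F_n^*$ and then to $u_2$ in $F_n^*\setminus u_1$, together with the isomorphisms $F_n^*\setminus\{u_1,u_2\}\cong R_{n-2}^*$, $F_n^*\setminus\{u_1,u_2,v_{n-8}\}\cong K_1\cup R_{n-4}^*$, and $F_n^*\setminus\{u_1,v_{n-8}\}\cong 2K_1\cup R_{n-4}^*$, I obtain
\[
\mu(F_n^*,x)=x^2\mu(R_{n-2}^*,x)-2x^2\mu(R_{n-4}^*,x).
\]
Under the assumption $n\equiv 2\pmod 3$ we have $n-2\equiv 0$ and $n-4\equiv 1\pmod 3$, and Lemma~\ref{rn}(ii) gives $\mu(R_{n-2}^*,1)=(-1)^{(n-5)/3}\cdot 2$ and $\mu(R_{n-4}^*,1)=(-1)^{(n-5)/3}$, so the right-hand side vanishes at $x=1$.

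For the second step, I split the vertex set into orbits and handle each separately. A pendant $w$ of $v_{n-8}$ satisfies $\mu(F_n^*\setminus w,x)=x\mu(R_{n-2}^*,x)-x\mu(R_{n-4}^*,x)$ from the intermediate identity above, giving $\mu(F_n^*\setminus w,1)=(-1)^{(n-5)/3}\ne 0$. For each of the remaining orbits (the endpoints $v_0, v_{n-7}$, the branch vertices $v_1, v_2, v_{n-8}$, the interior path vertices $v_3,\dots,v_{n-9}$, and the pendants at $v_1$ and at $v_2$), the graph $F_n^*\setminus w$ is a disjoint union of components drawn from $\{K_1,K_2,P_k,Y_m,Y_m^*,R_m,R_m^*\}$, possibly after one further reduction via Proposition~\ref{rule}. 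Evaluating via Proposition~\ref{rule}(i) and Lemmas~\ref{y}, \ref{rn}, \ref{yn*} (plus the direct values $\mu(K_2,1)=0$, $\mu(P_3,1)=-1$, $\mu(P_4,1)=-1$), and tracking residues modulo $3$ under the hypothesis $n\equiv 2\pmod 3$, one checks $\mu(F_n^*\setminus w,1)\ne 0$ in each case.

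The main obstacle is the bookkeeping in the second step: several deletions (such as $F_n^*\setminus v_0$, or deleting a pendant at $v_1$) produce graphs not directly among the named families, so an extra application of Proposition~\ref{rule}(ii) or (iii) is needed to reduce $\mu(F_n^*\setminus w,x)$ to a combination of polynomials covered by the auxiliary lemmas, and one has to track the residue class modulo $3$ of each component order to certify nonvanishing at $x=1$. Once the case analysis is complete, Theorem~\ref{essential} immediately delivers $m(1,F_n^*)=1$, completing the proof.
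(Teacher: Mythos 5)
Your overall strategy is exactly the paper's: show $\mu(F_n^*,1)=0$, verify that every vertex is $1$-essential, and invoke Theorem~\ref{essential}. Your identity $\mu(F_n^*,x)=x^2\mu(R_{n-2}^*,x)-2x^2\mu(R_{n-4}^*,x)$, obtained by peeling off the two pendants at $v_{n-8}$ one at a time, is correct and is a legitimate alternative to the paper's edge-deletion identity $\mu(F_n^*,x)=\mu(Y_4,x)\mu(R_{n-4}^*,x)-x^3\mu(R_{n-5}^*,x)$; it even hands you the pendants-at-$v_{n-8}$ case of the essentiality check for free. But note one concrete defect already in step one: for $n=11$ your formula calls on $\mu(R_7^*,1)$, which lies outside the range $n\ge 8$ in which $R_n^*$ is defined and Lemma~\ref{rn}(ii) is stated. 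The paper treats $n=11$ separately by computing $\mu(F_{11}^*,x)$ directly; you must either do the same or justify extending Lemma~\ref{rn}(ii) down to $R_7^*$.

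The more serious gap is that step two, which is where essentially all of the work in this proposition lives, is asserted rather than executed. You claim each deletion reduces, ``possibly after one further reduction,'' to components handled by Lemmas~\ref{y}, \ref{rn} and \ref{yn*}; this understates the situation. For example $F_n^*\setminus v_1$ and $F_n^*\setminus v_{n-7}$ each contain a component that is a path carrying a pair of pendants near \emph{both} ends (the graph $W^*$ of the paper, and $H_3$), and evaluating these at $x=1$ requires either the closed form for $\mu(W_m,1)$ from the proof of Proposition~\ref{wn} (Eq.~\eqref{eqwnn}) --- which is not in your declared toolkit --- or two successive pendant-peeling reductions down to $Y^*$-graphs. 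Likewise $F_n^*\setminus v_0$ and the deletions of pendants at $v_1$ and $v_2$ need an edge deletion at $v_1v_2$ before any auxiliary lemma applies, and the resulting signed combinations must actually be computed to certify nonvanishing (the paper's values, e.g.\ $\mu(H_2,1)=(-1)^{(n+1)/3}\cdot 4$, arise from exact cancellation-free arithmetic that cannot be taken for granted). Until this case analysis is carried out, the assertion that every vertex of $F_n^*$ is $1$-essential --- and hence the conclusion via Theorem~\ref{essential} --- remains unproved.
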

\begin{proof}
If $n=11$, then $\mu(F_n^*,x)=x^{11}-10x^9+27x^7-18x^5$ and so $1$ is a root of $\mu(F_n^*,x)$. As $F_n^*\setminus v_i\cong 3K_1\cup W^+$ for $i=1,3$,  where $W^+$ is shown in Fig. \ref{figyw}, we have by a direct calculation that $\mu(F_n^*\setminus v_i,1)=\mu(W^+,1)=1\ne 0$. As $F_n^*\setminus v_2\cong 2K_1\cup Y_4^*$, $\mu(F_n^*\setminus v_2,1)=\mu(Y_4^*,1)=-2\ne 0$ by Lemma \ref{yn*}. Let $u$ denote a pendant neighbor of $v_2$. Let $G_1=F_n^*\setminus v_0$ and $G_2=F_n^*\setminus u$. Note that $G_1-v_1v_2\cong Y_3\cup W^+$, $G_1\setminus v_1v_2\cong 4K_1\cup Y_4^*$, $G_2-v_1v_2\cong Y_4^*\cup Y_6^*$ and $G_2\setminus v_1v_2\cong 4K_1\cup Y_4^*$. Then, by Proposition \ref{rule}, Lemmas \ref{y} and \ref{yn*} and the fact that $\mu(W^+,1)=1$, we have \[
\mu(G_1,1)=\mu(Y_3,1)\mu(W^+,1)-\mu(Y_4^*,1)=(-1)-(-2)=1\ne 0
\]
and \[
\mu(G_2,1)=\mu(Y_4^*,1)\mu(Y_6^*,1)-\mu(Y_4^*,1)=2-(-1)=3\ne 0.
\]
For any pendant vertex $w$ in $F_n^*$, if $wv_2\in E(G)$, then $F_n^*\setminus w\cong G_2$, and otherwise, $F_n^*\setminus w\cong G_1$. So every vertex in $F_n^*$ is $1$-essential and therefore, $F_n^*$ is $1$-critical by Theorem \ref{essential}.

Assume in the following that $n\ge 14$.
Note that $F_n^*-v_{n-9}v_{n-8}\cong Y_{4}\cup R_{n-4}^*$ and $F_n^*\setminus v_{n-9}v_{n-8}\cong 3K_1\cup R_{n-5}^*$. By Proposition \ref{rule}, we have
\[
\mu(F_n^*,x)=\mu(Y_{4},x)\mu(R_{n-4}^*,x)-x^3\mu(R_{n-5}^*,x).
\]
As $n\equiv 2\pmod 3$, it follows by Lemmas \ref{y} and Lemma \ref{rn} that \[
\mu(F_n^*,1)=-2\cdot (-1)^{\frac{n-5}{3}}-(-1)^{\frac{n-8}{2}}\cdot 2=0,
\]
i.e., $1$ is a root of $\mu(F_n^*,x)$.

As $F_n^*\setminus v_{n-8}\cong 3K_1\cup R_{n-4}^*$, $F_n^*\setminus v_i\cong R_{i-1}^*\cup Y_{n-i}^*$ for $i=4,\dots,n-9$, $F_n^*\setminus v_3\cong W^+\cup Y_{n-8}^*$ and $F_n^*\setminus v_2\cong 2K_1\cup Y_4^*\cup Y_{n-7}^*$, we have by Proposition \ref{rule}, Lemmas \ref{rn} and \ref{yn*} and the fact that $\mu(W^+,1)=1$ that $\mu(F_n^*\setminus v_i,1)\ne 0$ for $i=2,\dots,n-8$. 

Note that $F_n^*\setminus v_1\cong 3K_1\cup W^*$, where $W^*$ is a graph of order $n-4$ shown in Fig. \ref{figyw}. Let $u$ be the vertex of degree $4$ in $W^*$ and $u'$ a pendant neighbor of $u$. As $n\equiv 2\pmod 3$, $W^*-uu'\cong K_1\cup W_{n-5}$ and $W^*\setminus uu'\cong 2K_1\cup Y_{n-8}$, we have by Proposition \ref{rule}, Eq.~\eqref{eqwnn} and Lemmas \ref{y} that
\begin{equation}\label{eqw}
\mu(W^*,1)=\mu(W_{n-5},1)-\mu(Y_{n-8},1)=(-1)^{\frac{n-2}{3}}\ne 0
\end{equation} 
and so $\mu(F_n^*\setminus v_1,1)\ne 0$. 

Denote by $v$ a pendant neighbor of $v_2$. Let $H_1=F_n^*\setminus v_0$, $H_2=F_n^*\setminus v$ and $H_3=F_n^*\setminus v_{n-7}$.
Note that $H_1-v_1v_2\cong Y_3\cup W^*$, $H_1\setminus v_1v_2\cong 4K_1\cup Y^*_{n-7}$, $H_2-v_1v_2\cong Y^*_4\cup Y^*_{n-5}$, $H_2\setminus v_1v_2\cong 4K_1\cup Y^*_{n-7}$, $H_3-v_1v_2\cong Y^*_4\cup W_{n-5}$ and $H_3\setminus v_1v_2\cong 5K_1\cup Y_{n-8}$. Then, by Proposition \ref{rule}, Lemmas \ref{y} and \ref{yn*}, and Eq.~\eqref{eqw} and \ref{eqwnn}, we have 
\[
\mu(H_1,1)=\mu(Y_3,1)\mu(W^*,1)-\mu(Y^*_{n-7},1)=(-1)\cdot(-1)^{\frac{n-2}{3}}-(-1)^{\frac{n-8}{3}}\cdot 2=(-1)^{\frac{n-2}{3}}\cdot 3\ne 0,
\]
\[
\mu(H_2,1)=\mu(Y^*_4,1)\mu(Y^*_{n-5},1)-\mu(Y^*_{n-7},1)=(-2)\cdot (-1)^{\frac{n-8}{3}}-(-1)^{\frac{n-8}{3}}\cdot 2=(-1)^{\frac{n+1}{3}}\cdot 4\ne 0
\]
and \[
\mu(H_3,1)=\mu(Y^*_4,1)\mu(W_{n-5},1)-\mu(Y_{n-8},1)=(-1)^{\frac{n-8}{3}}\ne 0.
\]
So every pendant vertex in $F_n^*$ is $1$-essential. Therefore, every vertex in $F_n^*$ is $1$-essential. By Theorem \ref{essential}, $F_n^*$ is $1$-critical.
\end{proof} 

It's easy to see that $K_2$ is the $1$-critical graph of minimum order. Evidently, it is also the $1$-critical tree of minimum order. It is natural to ask whether, for each order less than 9, there exists a $1$-critical tree. By Proposition \ref{wn}, $W_6$ is a $1$-critical tree of order $6$. However, it can be shown by SageMath, see Appendix A, that there is no $1$-critical trees of order $n$ with $n=3,4,5,7,8$.
However, it can be shown using SageMath (see Appendix A) that there is no $1$-critical tree of order $3,4,5,7$ or $8$.

\subsection{Existence of $1$-critical graphs}

The second part of Theorem \ref{1} is proved in this subsection by constructing $1$-critical unicyclic graphs for $n\ge 5$ except $n=7$ (see Proposition \ref{unic}), and by confirming the case $n=7$ with SageMath (see Appendix A). 


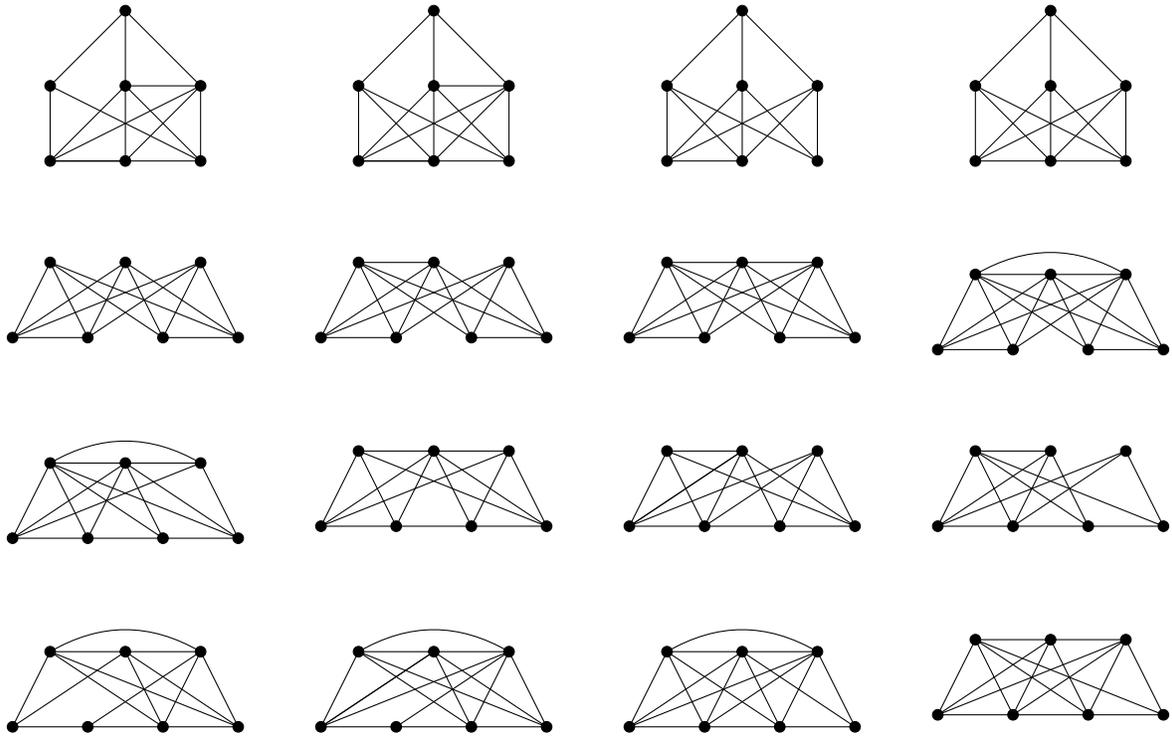
\begin{figure}[htbp]
\centering
\begin{minipage}{.24\textwidth}
\centering
\begin{tikzpicture}
\filldraw [black] (0,0) circle (2pt);
\filldraw [black] (1,0) circle (2pt);
\filldraw [black] (0,1) circle (2pt);
\filldraw [black] (-1,0) circle (2pt);
\filldraw [black] (0,-1) circle (2pt);
\filldraw [black] (-1,-1) circle (2pt);
\filldraw [black] (1,-1) circle (2pt);
\draw [black] (0,0)--(1,-1)--(1,0)--(0,-1)--(-1,-1)--(0,0)--(0,-1)--(-1,-1)--(-1,0)--(0,1)--(0,0)--(1,0)--(0,1);
\draw[black] (0,-1)--(1,-1)--(-1,0);
\draw[black] (1,0)--(-1,-1);
\end{tikzpicture}
\end{minipage}
\begin{minipage}{.24\textwidth}
\centering
\begin{tikzpicture}
\filldraw [black] (0,0) circle (2pt);
\filldraw [black] (1,0) circle (2pt);
\filldraw [black] (0,1) circle (2pt);
\filldraw [black] (-1,0) circle (2pt);
\filldraw [black] (0,-1) circle (2pt);
\filldraw [black] (-1,-1) circle (2pt);
\filldraw [black] (1,-1) circle (2pt);
\draw [black] (0,0)--(1,-1)--(1,0)--(0,-1)--(-1,-1)--(0,0)--(0,-1)--(-1,-1)--(-1,0)--(0,1)--(0,0)--(1,0)--(0,1);
\draw[black] (0,-1)--(1,-1)--(-1,0)--(0,-1);
\draw[black] (1,0)--(-1,-1);
\end{tikzpicture}
\end{minipage}
\begin{minipage}{.24\textwidth}
\centering
\begin{tikzpicture}
\filldraw [black] (0,0) circle (2pt);
\filldraw [black] (1,0) circle (2pt);
\filldraw [black] (0,1) circle (2pt);
\filldraw [black] (-1,0) circle (2pt);
\filldraw [black] (0,-1) circle (2pt);
\filldraw [black] (-1,-1) circle (2pt);
\filldraw [black] (1,-1) circle (2pt);
\draw [black] (0,0)--(1,-1)--(1,0)--(0,-1)--(-1,-1)--(0,0)--(0,-1);
\draw [black] (-1,-1)--(-1,0)--(0,1)--(0,0);
\draw[black] (1,-1)--(-1,0)--(0,-1);
\draw[black] (0,1)--(1,0)--(-1,-1);
\end{tikzpicture}
\end{minipage}
\begin{minipage}{.24\textwidth}
\centering
\begin{tikzpicture}
\filldraw [black] (0,0) circle (2pt);
\filldraw [black] (1,0) circle (2pt);
\filldraw [black] (0,1) circle (2pt);
\filldraw [black] (-1,0) circle (2pt);
\filldraw [black] (0,-1) circle (2pt);
\filldraw [black] (-1,-1) circle (2pt);
\filldraw [black] (1,-1) circle (2pt);
\draw [black] (0,0)--(1,-1)--(1,0)--(0,-1)--(-1,-1)--(0,0)--(0,-1);
\draw [black] (-1,-1)--(-1,0)--(0,1)--(0,0);
\draw[black] (0,-1)--(1,-1)--(-1,0)--(0,-1);
\draw[black] (0,1)--(1,0)--(-1,-1);
\end{tikzpicture}
\end{minipage}
\\[1cm]
\begin{minipage}{.24\textwidth}
\centering
\begin{tikzpicture}
\filldraw [black] (-1,1) circle (2pt);
\filldraw [black] (1,1) circle (2pt);
\filldraw [black] (0,1) circle (2pt);
\filldraw [black] (-0.5,0) circle (2pt);
\filldraw [black] (0.5,0) circle (2pt);
\filldraw [black] (1.5,0) circle (2pt);
\filldraw [black] (-1.5,0) circle (2pt);
\draw [black] (0,1)--(1.5,0)--(0.5,0)--(0,1)--(-1.5,0)--(-0.5,0)--(0,1);
\draw [black] (1.5,0)--(-1,1)--(-1.5,0);
\draw [black] (-0.5,0)--(-1,1)--(0.5,0);
\draw[black] (-1.5,0)--(1,1)--(1.5,0);
\draw[black] (-0.5,0)--(1,1)--(0.5,0);
\end{tikzpicture}
\end{minipage}
\begin{minipage}{.24\textwidth}
\centering
\begin{tikzpicture}
\filldraw [black] (-1,1) circle (2pt);
\filldraw [black] (1,1) circle (2pt);
\filldraw [black] (0,1) circle (2pt);
\filldraw [black] (-0.5,0) circle (2pt);
\filldraw [black] (0.5,0) circle (2pt);
\filldraw [black] (1.5,0) circle (2pt);
\filldraw [black] (-1.5,0) circle (2pt);
\draw [black] (0,1)--(1.5,0)--(0.5,0)--(0,1)--(-1.5,0)--(-0.5,0)--(0,1)--(-1,1)--(-1.5,0);
\draw [black] (-0.5,0)--(-1,1)--(0.5,0);
\draw[black] (-1.5,0)--(1,1)--(1.5,0)--(-1,1);
\draw[black] (-0.5,0)--(1,1)--(0.5,0);
\end{tikzpicture}
\end{minipage}
\begin{minipage}{.24\textwidth}
\centering
\begin{tikzpicture}
\filldraw [black] (-1,1) circle (2pt);
\filldraw [black] (1,1) circle (2pt);
\filldraw [black] (0,1) circle (2pt);
\filldraw [black] (-0.5,0) circle (2pt);
\filldraw [black] (0.5,0) circle (2pt);
\filldraw [black] (1.5,0) circle (2pt);
\filldraw [black] (-1.5,0) circle (2pt);
\draw [black] (1,1)--(0,1)--(1.5,0)--(0.5,0)--(0,1)--(-1.5,0)--(-0.5,0)--(0,1)--(-1,1)--(-1.5,0);
\draw [black] (-0.5,0)--(-1,1)--(0.5,0);
\draw[black] (-1.5,0)--(1,1)--(1.5,0)--(-1,1);
\draw[black] (-0.5,0)--(1,1)--(0.5,0);
\end{tikzpicture}
\end{minipage}
\begin{minipage}{.24\textwidth}
\centering
\begin{tikzpicture}
\filldraw [black] (-1,1) circle (2pt);
\filldraw [black] (1,1) circle (2pt);
\filldraw [black] (0,1) circle (2pt);
\filldraw [black] (-0.5,0) circle (2pt);
\filldraw [black] (0.5,0) circle (2pt);
\filldraw [black] (1.5,0) circle (2pt);
\filldraw [black] (-1.5,0) circle (2pt);
\draw [black] (1,1)--(0,1)--(1.5,0)--(0.5,0)--(0,1)--(-1.5,0)--(-0.5,0)--(0,1)--(-1,1)--(-1.5,0);
\draw [black] (-0.5,0)--(-1,1)--(0.5,0);
\draw[black] (-1.5,0)--(1,1)--(1.5,0)--(-1,1);
\draw[black] (-0.5,0)--(1,1)--(0.5,0);
\draw[black] (-1,1) to[out=30, in=150] (1,1);
\end{tikzpicture}
\end{minipage}
\\[1cm]
\begin{minipage}{.24\textwidth}
\centering
\begin{tikzpicture}
\filldraw [black] (-1,1) circle (2pt);
\filldraw [black] (1,1) circle (2pt);
\filldraw [black] (0,1) circle (2pt);
\filldraw [black] (-0.5,0) circle (2pt);
\filldraw [black] (0.5,0) circle (2pt);
\filldraw [black] (1.5,0) circle (2pt);
\filldraw [black] (-1.5,0) circle (2pt);
\draw [black] (1,1)--(0,1)--(-1,1);
\draw [black] (-1.5,0)--(-0.5,0)--(0.5,0)--(1.5,0);
\draw[black] (-1,1)--(-0.5,0)--(0,1)--(0.5,0)--(-1,1);
\draw [black] (1.5,0)--(1,1)--(-1.5,0)--(0,1)--(1.5,0)--(-1,1)--(-1.5,0);
\draw[black] (-1,1) to[out=30, in=150] (1,1);
\end{tikzpicture}
\end{minipage}
\begin{minipage}{.24\textwidth}
\centering
\begin{tikzpicture}
\filldraw [black] (-1,1) circle (2pt);
\filldraw [black] (1,1) circle (2pt);
\filldraw [black] (0,1) circle (2pt);
\filldraw [black] (-0.5,0) circle (2pt);
\filldraw [black] (0.5,0) circle (2pt);
\filldraw [black] (1.5,0) circle (2pt);
\filldraw [black] (-1.5,0) circle (2pt);
\draw [black] (1,1)--(0,1)--(-1,1);
\draw [black] (0,1)--(-1.5,0)--(-0.5,0)--(0.5,0)--(1.5,0);
\draw[black] (1,1)--(-1.5,0)--(-1,1)--(-0.5,0)--(0,1)--(1.5,0)--(-1,1);
\draw [black] (0,1)--(0.5,0)--(1,1)--(1.5,0);
\end{tikzpicture}
\end{minipage}
\begin{minipage}{.24\textwidth}
\centering
\begin{tikzpicture}
\filldraw [black] (-1,1) circle (2pt);
\filldraw [black] (1,1) circle (2pt);
\filldraw [black] (0,1) circle (2pt);
\filldraw [black] (-0.5,0) circle (2pt);
\filldraw [black] (0.5,0) circle (2pt);
\filldraw [black] (1.5,0) circle (2pt);
\filldraw [black] (-1.5,0) circle (2pt);
\draw [black] (0,1)--(-1,1);
\draw [black] (0,1)--(-1.5,0)--(-0.5,0)--(0.5,0)--(1.5,0)--(-1,1);
\draw[black] (-1.5,0)--(1,1)--(1.5,0)--(0,1)--(0.5,0)--(1,1);
\draw[black] (-0.5,0)--(-1,1)--(-1.5,0)--(0,1)--(-0.5,0)--(1,1);
\end{tikzpicture}
\end{minipage}
\begin{minipage}{.24\textwidth}
\centering
\begin{tikzpicture}
\filldraw [black] (-1,1) circle (2pt);
\filldraw [black] (1,1) circle (2pt);
\filldraw [black] (0,1) circle (2pt);
\filldraw [black] (-0.5,0) circle (2pt);
\filldraw [black] (0.5,0) circle (2pt);
\filldraw [black] (1.5,0) circle (2pt);
\filldraw [black] (-1.5,0) circle (2pt);
\draw [black] (0,1)--(-1,1);
\draw [black] (-1,1)--(-1.5,0)--(-0.5,0)--(0.5,0)--(1.5,0)--(1,1);
\draw [black] (0,1)--(-1.5,0)--(1,1)--(-0.5,0)--(-1,1)--(0.5,0)--(0,1)--(-0.5,0);
\draw [black] (-1,1)--(1.5,0);
\end{tikzpicture}
\end{minipage}
\\[1cm]
\begin{minipage}{.24\textwidth}
\centering
\begin{tikzpicture}
\filldraw [black] (-1,1) circle (2pt);
\filldraw [black] (1,1) circle (2pt);
\filldraw [black] (0,1) circle (2pt);
\filldraw [black] (-0.5,0) circle (2pt);
\filldraw [black] (0.5,0) circle (2pt);
\filldraw [black] (1.5,0) circle (2pt);
\filldraw [black] (-1.5,0) circle (2pt);
\draw [black] (1,1)--(0,1)--(-1,1);
\draw [black] (0.5,0)--(-1,1)--(-1.5,0)--(-0.5,0)--(0.5,0)--(1.5,0)--(1,1);
\draw[black] (-1,1) to[out=30, in=150] (1,1);
\draw [black] (-1.5,0)--(0,1)--(1.5,0)--(-1,1);
\draw [black] (-0.5,0)--(1,1)--(0.5,0)--(0,1);
\end{tikzpicture}
\end{minipage}
\begin{minipage}{.24\textwidth}
\centering
\begin{tikzpicture}
\filldraw [black] (-1,1) circle (2pt);
\filldraw [black] (1,1) circle (2pt);
\filldraw [black] (0,1) circle (2pt);
\filldraw [black] (-0.5,0) circle (2pt);
\filldraw [black] (0.5,0) circle (2pt);
\filldraw [black] (1.5,0) circle (2pt);
\filldraw [black] (-1.5,0) circle (2pt);
\draw [black] (1,1)--(0,1)--(-1,1);
\draw [black] (-1,1)--(-1.5,0)--(-0.5,0)--(0.5,0)--(1.5,0)--(1,1);
\draw[black] (-1,1) to[out=30, in=150] (1,1);
\draw [black] (-1.5,0)--(0,1)--(1.5,0)--(-1,1)--(0.5,0)--(1,1)--(-0.5,0);
\draw [black] (1,1)--(-1.5,0)--(0,1)--(0.5,0);
\end{tikzpicture}
\end{minipage}
\begin{minipage}{.24\textwidth}
\centering
\begin{tikzpicture}
\filldraw [black] (-1,1) circle (2pt);
\filldraw [black] (1,1) circle (2pt);
\filldraw [black] (0,1) circle (2pt);
\filldraw [black] (-0.5,0) circle (2pt);
\filldraw [black] (0.5,0) circle (2pt);
\filldraw [black] (1.5,0) circle (2pt);
\filldraw [black] (-1.5,0) circle (2pt);
\draw [black] (1,1)--(0,1)--(-1,1);
\draw [black] (-1,1)--(-1.5,0)--(-0.5,0)--(0.5,0)--(1.5,0)--(1,1);
\draw[black] (-1,1) to[out=30, in=150] (1,1);
\draw [black] (1.5,0)--(0,1)--(-1.5,0)--(1,1)--(0.5,0)--(-1,1)--(-0.5,0)--(1,1);
\draw [black] (-0.5,0)--(0,1)--(0.5,0);
\end{tikzpicture}
\end{minipage}
\begin{minipage}{.24\textwidth}
\centering
\begin{tikzpicture}
\filldraw [black] (-1,1) circle (2pt);
\filldraw [black] (1,1) circle (2pt);
\filldraw [black] (0,1) circle (2pt);
\filldraw [black] (-0.5,0) circle (2pt);
\filldraw [black] (0.5,0) circle (2pt);
\filldraw [black] (1.5,0) circle (2pt);
\filldraw [black] (-1.5,0) circle (2pt);
\draw [black] (1,1)--(0,1)--(-1,1);
\draw [black] (-1,1)--(-1.5,0)--(-0.5,0)--(0.5,0)--(1.5,0)--(1,1);
\draw [black] (1,1)--(-1.5,0)--(0,1)--(-0.5,0)--(1,1)--(0.5,0)--(-1,1)--(-0.5,0);
\draw [black] (1.5,0)--(-1,1);
\draw [black] (0,1)--(0.5,0);
\end{tikzpicture}
\end{minipage}
\caption{$1$-critical graphs on $7$ vertices.}
\label{critical7}
\end{figure}

For $n\ge 6$, let $C_n^*$ denote the graph obtained from the cycle $C_{n-3}:=v_0\dots v_{n-4}v_0$ by attaching two pendant vertices to $v_0$ and one pendant vertex to $v_2$, see Fig.~\ref{figc}. For $n\ge 10$, let $\hat{C}_n$ denote the graph obtained from the cycle $C_{n-5}:=v_0\dots v_{n-6}v_0$ by attaching one pendant vertex to each vertex of $\{v_0,v_3,v_4\}$ and two pendant vertices to $v_2$, see Fig. \ref{figc}. For $n\ge 5$, let $C_n^+$ denote the graph obtained from the cycle $C_{n-1}$ by attaching a pendant vertex to one vertex of $C_{n-1}$, also see Fig.~\ref{figc}.

\begin{figure}[htbp]
\centering
\begin{tikzpicture}
\draw [black](-1,0) circle (1.2);
\filldraw [black] (-0.25,1.75) circle (2pt);
\filldraw [black] (-1.75,1.75) circle (2pt);
\filldraw [black] (-1,1.2) circle (2pt);
\filldraw [black] (-0.15,0.82) circle (2pt);
\filldraw [black] (0.2,0) circle (2pt);
\filldraw [black] (1.2,0) circle (2pt);
\draw [black] (-0.25,1.75)--(-1,1.2)--(-1.75,1.75);
\draw [black] (1.2,0)--(0.2,0);
\node at (-1,0.9) {\small $v_0$};
\node at (0.15,0.82) {\small $v_1$};
\node at (0.4,-0.3) {\small $v_2$};
\node at (-1,0) {\small $C_{n-3}$};
\node at (0.5,-1.25) {$C_n^*$};
\draw [black](5,0) circle (1.2);
\filldraw [black] (5.75,1.75) circle (2pt);
\filldraw [black] (4.25,1.75) circle (2pt);
\filldraw [black] (5,1.2) circle (2pt);
\filldraw [black] (5.85,0.82) circle (2pt);
\filldraw [black] (4.15,0.82) circle (2pt);
\filldraw [black] (2.8,0) circle (2pt);
\filldraw [black] (3.8,0) circle (2pt);
\filldraw [black] (6.2,0) circle (2pt);
\filldraw [black] (7.2,0) circle (2pt);
\filldraw [black] (6.85,0.82) circle (2pt);
\draw [black]  (5.85,0.82)--(6.85,0.82);
\draw [black] (5.75,1.75)--(5,1.2)--(4.25,1.75);
\draw [black] (6.2,0)--(7.2,0);
\draw [black] (2.8,0)--(3.8,0);
\node at (3.5,-0.3) {\small $v_0$};
\node at (3.85,0.82){\small $v_1$};
\node at (5,0.9){\small $v_2$};
\node at (6.15,0.95) {\small $v_3$};
\node at (6.4,-0.3){\small $v_4$};
\node at (5,0) {\small $C_{n-5}$};
\node at (6.5,-1.25) {$\hat{C}_n$};
\draw [black](10,0) circle (1.2);
\filldraw [black] (11.2,0) circle (2pt);
\filldraw [black] (12.2,0) circle (2pt);
\draw [black] (11.2,0)--(12.2,0);
\node at (10,0) {\small $C_{n-1}$};
\node at (11.5,-1.25) {$C_n^+$};
\end{tikzpicture}
\caption{The graph $C_n^*$, $\hat{C}_n$ and $C_n^+$ (left to right).}
\label{figc}
\end{figure}
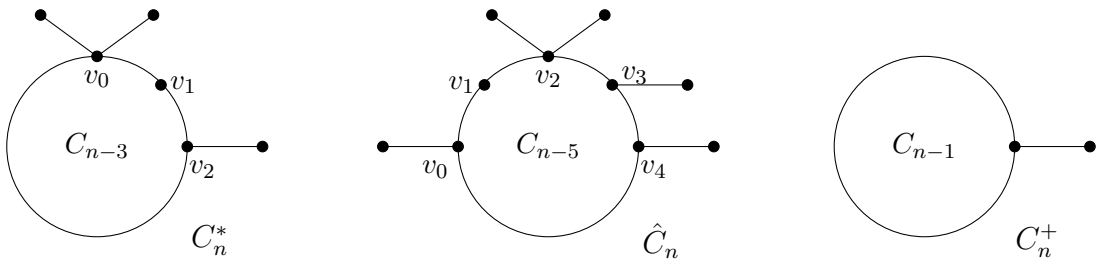

\begin{proposition}\label{unic}
(i) For $n\ge 6$ with $n\equiv 0\pmod 3$, $C_n^*$ is $1$-critical;\\
(ii) For $n\ge 10$ with $n\equiv 1\pmod 3$, $\hat{C}_n$ is $1$-critical;\\
(iii) For $n\ge 5$ with $n\equiv 2\pmod 3$, $C_n^+$ is $1$-critical.
\end{proposition}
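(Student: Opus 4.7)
The plan is to follow exactly the template of Propositions~\ref{wn}, \ref{fn} and \ref{fn*}: for each $G\in\{C_n^*,\hat{C}_n,C_n^+\}$, I would use Proposition~\ref{rule} to reduce both $\mu(G,x)$ and $\mu(G\setminus v,x)$ to matching polynomials of previously analysed building blocks (paths $P_t$, and $Y_s$, $Y_s^*$, $W_s$, $R_s$, $R_s^*$, shorter cycles $C_m$), evaluate at $x=1$ via Lemmas~\ref{y}, \ref{rn}, \ref{yn*} and Eq.~\eqref{eqwnn}, confirm $\mu(G,1)=0$ and $\mu(G\setminus v,1)\ne 0$ for every $v$, and invoke Theorem~\ref{essential} to conclude $G$ is $1$-critical. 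An auxiliary fact used throughout is that $p_m:=\mu(P_m,1)$ satisfies $p_m=p_{m-1}-p_{m-2}$ with $p_0=p_1=1$, giving the period-$6$ pattern $1,1,0,-1,-1,0,\ldots$; in particular $p_m=0$ iff $m\equiv 2\pmod 3$. Together with the edge identity $\mu(C_m,x)=\mu(P_m,x)-\mu(P_{m-2},x)$, this reads off $\mu(C_m,1)=p_m-p_{m-2}$ immediately.

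For part (iii), applying identity (ii) of Proposition~\ref{rule} to the pendant edge of $C_n^+$ gives $\mu(C_n^+,x)=x\mu(C_{n-1},x)-\mu(P_{n-2},x)$, hence $\mu(C_n^+,1)=p_{n-1}-p_{n-3}-p_{n-2}$, which vanishes whenever $n\equiv 2\pmod 3$. The vertex orbits are the pendant $w$, its cycle neighbour $v_0$, the pair $\{v_1,v_{n-2}\}$ adjacent to $v_0$, and each singleton $\{v_i\}$ for $2\le i\le n-3$; the first three deletions give $C_{n-1}$, $K_1\cup P_{n-2}$ and $P_{n-1}$, all with $\mu(\cdot,1)=\pm 1$ from the $p$-table. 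Each remaining deletion is a spider at $v_0$ with arms of lengths $i-1$, $n-2-i$, $1$; expanding at $v_0$ yields $\mu(C_n^+\setminus v_i,1)=-p_{i-2}p_{n-2-i}-p_{i-1}p_{n-3-i}$, and splitting on $i\bmod 3$ shows that exactly one summand vanishes in each case while the other equals $\pm 1$.

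For parts (i) and (ii) the recipe is identical but the case analysis is longer. For $C_n^*$ with $n\equiv 0\pmod 3$, expansion at the degree-$4$ vertex $v_0$ gives
\[
\mu(C_n^*,x)=x^3\mu(Y_{n-3},x)-2x\mu(Y_{n-3},x)-x^2\mu(P_{n-4},x)-x^2\mu(Y_{n-4},x),
\]
and the substitutions $p_{n-4}=0$, $\mu(Y_{n-3},1)=(-1)^{(n-3)/3}$, $\mu(Y_{n-4},1)=(-1)^{(n-6)/3}$ from Lemma~\ref{y} collapse this to $\mu(C_n^*,1)=0$. The orbits are $\{w_1,w_2\}$, $\{w_3\}$, $\{v_0\}$ and the singletons $\{v_i\}$ for $1\le i\le n-4$; each $C_n^*\setminus v$ decomposes as a disjoint union of $K_1$, $P_t$, $Y_s$ or a smaller cycle-with-pendants graph. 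The cycle-with-pendants cases (i.e.\ the $\{w_j\}$-deletions) are tamed by one further application of identity~(ii) at a pendant edge, landing back in the $P_t/Y_s$ catalogue; the interior deletions $C_n^*\setminus v_i$ give caterpillars whose $\mu(\cdot,1)$ splits, via expansion at $v_2$, into products of $p$-values and $\mu(Y_s,1)$-values in which at most one factor vanishes. For $\hat{C}_n$ with $n\equiv 1\pmod 3$, I would apply identity~(ii) to one of the two pendants at $v_2$, reducing $\mu(\hat{C}_n,x)$ to a combination involving $\mu(R_{n-2}^*,x)$ and $\mu(R_{n-4},x)$ whose values at $1$ cancel by Lemma~\ref{rn}; the orbit-by-orbit verification again reduces every $\mu(\hat{C}_n\setminus v,1)$ to non-vanishing combinations drawn from Lemmas~\ref{y}, \ref{rn}, \ref{yn*} and the $p$-table.

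The main obstacle is purely bookkeeping. Each of $C_n^*$ and $\hat{C}_n$ has many vertex orbits (several singletons inside the cycle), and a handful of deletions produce caterpillars or cycle-with-pendants graphs outside the existing catalogue that need one extra reduction via Proposition~\ref{rule}(ii) on a pendant edge before the period-$6$ structure of $p_m$ finishes the job. No new lemma is required; the challenge is a disciplined tabulation, for every orbit representative, of the isomorphism type of $G\setminus v$ together with the residue of $n$ modulo $3$, so that the sign from each of Lemmas~\ref{y}, \ref{rn}, \ref{yn*} can be read off and shown not to cancel.
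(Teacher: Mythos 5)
Your overall strategy is exactly the one the paper intends: the paper omits this proof with the remark that it is ``almost similar to Propositions~\ref{wn}--\ref{fn*}'', i.e.\ reduce everything via Proposition~\ref{rule} to the catalogued trees, evaluate at $1$ using the period-$6$ sequence $p_m=\mu(P_m,1)$ and Lemmas~\ref{y}, \ref{rn}, \ref{yn*}, and finish with Theorem~\ref{essential}. Your identities $\mu(C_n^+,x)=x\mu(C_{n-1},x)-\mu(P_{n-2},x)$, $\mu(C_m,1)=p_m-p_{m-2}$, and the expansion of $\mu(C_n^*,x)$ at the degree-$4$ vertex all check out, and the $C_n^*$ computation does collapse to $0$ when $n\equiv 0\pmod 3$ (modulo the boundary case $n=6$, where $Y_2$ is undefined and a direct computation is needed).

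Two of your concrete claims are off, though neither is fatal. First, in part (iii) the interior deletions give $\mu(C_n^+\setminus v_i,1)=-p_{i-2}p_{n-2-i}-p_{i-1}p_{n-3-i}$ as you say, but your assertion that ``exactly one summand vanishes in each case'' fails for $i\equiv 2\pmod 3$: there all four indices are $\not\equiv 2\pmod 3$, so both summands are $\pm 1$. Writing $i=3a+2$, $n=3b+2$ one finds both summands equal $-(-1)^{b-1}$, so the sum is $\pm 2\ne 0$ and the conclusion survives, but you must check the signs agree rather than invoke a vanishing summand. Second, your reduction of $\hat{C}_n$ ``to a combination involving $\mu(R_{n-2}^*,x)$ and $\mu(R_{n-4},x)$'' does not match the graph: deleting a pendant at $v_2$ together with $v_2$ leaves a caterpillar whose remaining pendants sit at $v_3$ (an endpoint), $v_4$ and $v_0$, i.e.\ at positions $3$ and $n-6$ of a path on $n-5$ vertices --- this is neither $R_{n-4}$ nor $R_{n-2}^*$, and $\hat C_n$ minus that pendant edge is a cycle with four pendants, also outside the catalogue. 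The method still works, but for part (ii) you will need either additional auxiliary evaluations (in the spirit of Lemmas~\ref{rn} and \ref{yn*}) for these new caterpillar shapes, or a different choice of expansion edge; the cancellation cannot simply be read off Lemma~\ref{rn} as stated.
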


The proof is almost similar to Propositions \ref{wn}--\ref{fn*}, and is omitted.

%
%

It's worth noting that there are exactly $16$ graphs of order $7$ that is $1$-critical (see Fig. \ref{critical7}), and all of them are not unicyclic.
In Fig.~\ref{critical7}, one can be observed that some graphs remain 1-critical after adding an edge. For example, adding an edge to the last graph in the third row yields the last graph in the fourth row, both of them are $1$-critical.
It's natural to ask when a $1$-critical graph remains $1$-critical after edge addition.
So in the final part, we provide a construction that generates a new $1$-critical graph from an existing one.

\begin{theorem}\label{addedges}
Let $G$ be a $1$-critical graph with a cut vertex $u$ of degree three. If $G\setminus u$ consists of three components, of which at least one is trivial, then $G+u_1u_2$ is $1$-critical, where $u_1$ and $u_2$ are neighbors of $u$ such that the remaining neighbor is pendant.
\end{theorem}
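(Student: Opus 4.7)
The plan is to introduce the notation: let $u_3$ be the pendant neighbor of $u$ (in the trivial component of $G\setminus u$), let $H_1,H_2$ be the components of $G\setminus u$ containing $u_1,u_2$, write $G' = G+u_1u_2$, and set $a_i = \mu(H_i,1)$ and $b_i = \mu(H_i\setminus u_i,1)$ for $i=1,2$. The first step is to verify $\mu(G',1) = 0$: by Proposition~\ref{rule}(ii),
\[
\mu(G',x) = \mu(G,x) - \mu(G\setminus u_1u_2,x),
\]
and in $G\setminus u_1u_2$ the vertex $u$ has only $u_3$ remaining as a neighbor, so $\{u,u_3\}$ forms a $K_2$-component, forcing $\mu(G\setminus u_1u_2,1) = 0$. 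Combined with $\mu(G,1) = 0$, this gives $\mu(G',1) = 0$.

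The second step is to prove $\mu(G'\setminus v,1)\neq 0$ for every $v\in V(G')$; this gives $m(1,G'\setminus v) = 0$, whence Lemma~\ref{interlacing} and $\mu(G',1) = 0$ force $m(1,G') = 1$, making every vertex $1$-essential and $G'$ therefore $1$-critical. For $v\in\{u_1,u_2\}$ the added edge loses an endpoint, so $G'\setminus v = G\setminus v$ and the conclusion is immediate from $1$-criticality of $G$. For $v\in V(H_i)\setminus\{u_i\}$, Proposition~\ref{rule}(ii) applied to $u_1u_2$ in $G'\setminus v$ yields
\[
\mu(G'\setminus v,x) = \mu(G\setminus v,x) - \mu(G\setminus\{v,u_1,u_2\},x),
\]
and the same $K_2$-component $\{u,u_3\}$ reappears in $G\setminus\{v,u_1,u_2\}$, so the subtracted term vanishes at $1$ and $\mu(G'\setminus v,1) = \mu(G\setminus v,1)\neq 0$.

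The main obstacle is the case $v\in\{u,u_3\}$, where removing $v$ destroys the $K_2$-trick. Direct computation via Proposition~\ref{rule}(ii) on $u_1u_2$ in both $G'\setminus u$ and $G'\setminus u_3$ (combined with the pendant recurrence $\mu(G,x) = x\mu(G\setminus u_3,x) - \mu(G\setminus u_3u,x)$ to evaluate $\mu(G\setminus u_3,1)$) shows
\[
\mu(G'\setminus u,1) = \mu(G'\setminus u_3,1) = a_1a_2 - b_1b_2.
\]
To prove this expression is nonzero, I would apply Proposition~\ref{rule}(iii) to $u$ in $G$ and evaluate at $x = 1$, obtaining
\[
0 = \mu(G,1) = a_1a_2 - b_1a_2 - a_1b_2 - a_1a_2,
\]
so $a_1b_2 + a_2b_1 = 0$. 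Furthermore, $1$-criticality of $G$ makes $a_1a_2 = \mu(G\setminus u,1)\neq 0$ and forces $b_i$ to appear as a nonzero factor of $\mu(G\setminus u_i,1)\neq 0$, so $a_1,a_2,b_1,b_2$ are all nonzero. Setting $c = a_1/b_1 = -a_2/b_2$, we obtain
\[
a_1a_2 - b_1b_2 = -(c^2+1)\,b_1b_2 \neq 0,
\]
which closes the argument. The entire proof rests on the extraction of the identity $a_1b_2 + a_2b_1 = 0$ from $\mu(G,1)=0$; this is the only place where the full $1$-criticality of $G$ (rather than just vertex-deletion nonvanishing) is used, and it is what rules out the accidental cancellation $a_1a_2 = b_1b_2$.
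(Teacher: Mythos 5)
Your proposal is correct and follows essentially the same route as the paper: the $K_2$-component $\{u,u_3\}$ gives $\mu(G',1)=0$ and disposes of every vertex deletion except $u$ and $u_3$, and the identity $a_1b_2+a_2b_1=0$ that you extract from $\mu(G,1)=0$ is exactly the paper's relation $\mu(F_1\setminus u_1,1)\mu(F_2,1)=-\mu(F_1,1)\mu(F_2\setminus u_2,1)$. The only cosmetic difference is the final algebra: you divide by $b_1,b_2$ (legitimate, since $\deg u=3$ makes $H_i\setminus u_i$ a union of components of $G\setminus u_i$, so $b_i\neq 0$), whereas the paper substitutes $b_1=-a_1b_2/a_2$ to get $a_1a_2-b_1b_2=a_1\bigl(a_2^2+b_2^2\bigr)/a_2\neq 0$, which needs only $a_1,a_2\neq 0$.
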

\begin{proof}
Let $G'=G+u_1u_2$. As $G'\setminus u_1u_2=G\setminus u_1u_2$ and $G\setminus u_1u_2$ contains $K_2$ as a component, we have by Proposition \ref{rule} that $\mu(G'\setminus u_1u_2,x)$ has $1$ as a root and $\mu(G',x)=\mu(G,x)-\mu(G\setminus u_1u_2,x)$, so $1$ is a root of $\mu(G',x)$.

Let $z\in V(G)\setminus N[u]$. Let $T$ denote the component of $G\setminus z$ containing $u$ and $T_1,\dots,T_s$ the remaining components of $G\setminus z$ if $s:=d(z)-1\ge 1$. Then by Proposition \ref{rule} again, $\mu(T,x)$, $\mu(T_1,x), \dots,\mu(T_s,x)$ do not contain $1$ as a root.
Note that $T+vw\setminus u_1u_2$ coincides with $T\setminus u_1u_2$, and it  contains $K_2$ as a component. Moreover, \[
\mu(T+u_1u_2,x)=\mu(T,x)-\mu(T\setminus u_1u_2,x).
\]
Then $\mu(T+u_1u_2,x)$ does not contain $1$ as a root. 
As $G'\setminus z\cong T+vw$ if $s=0$ and $G'\setminus z\cong T+u_1u_2\cup T_1\cup \dots\cup T_s$ if $s\ge 1$, $1$ is not a root of $\mu(G'\setminus z,x)$.

Let $u_0$ denote the neighbor of $u$ different from $u_1$ and $u_2$. 
By Proposition \ref{rule}, 
\[
\mu(G,x)=x\mu(G\setminus u_0,x)-\mu(G\setminus uu_0,x),
\]
\[
\mu(G,x)=x\mu(G\setminus u,x)-\mu(G\setminus uu_0,x)-\mu(G\setminus uu_1,x)-\mu(G\setminus uu_2,x)
\]
and $\mu(G\setminus u,x)=x\mu(G\setminus uu_0,x)$.
Then, as $G$ is $1$-critical, 
\[
\mu(G\setminus u_0,1)=\mu(G\setminus u,1) 
\]
and 
\[
\mu(G\setminus uu_1,1)=-\mu(G\setminus uu_2,1).
\]
Let $F_i$ denote the component of $G\setminus u$ containing $u_i$ for $i=1,2$. We have $G\setminus u\cong K_1\cup F_1\cup F_2$ and $\mu(F_i,1)\ne 0$ for $i=1,2$. Moreover, as $G\setminus uu_1=K_1\cup (F_1\setminus u_1)\cup F_2$ and $G\setminus uu_2=K_1\cup F_1\cup (F_2\setminus u_2)$,
\begin{equation}\label{eqf12}
\mu(F_1\setminus u_1,1)\mu(F_2,1)=-\mu(F_1,1)\mu(F_2\setminus u_2,1).
\end{equation}
Note that $G'\setminus u_0u_1u_2\cong G\setminus u_0u_1u_2\cong (F_1\setminus u_1)\cup (F_2\setminus u_2)$. We have by Proposition \ref{rule} again that 
\[
\mu(G'\setminus u_0,1)=\mu(G\setminus u_0,1)-\mu(G'\setminus u_0u_1u_2,1)=\mu(G\setminus u,1)-\mu(G'\setminus uu_1u_2,1)
\]
and
\[
\mu(G'\setminus u,1)=\mu(G\setminus u,1)-\mu(G'\setminus uu_1u_2,1)=\mu(G\setminus u,1)-\mu(G'\setminus uu_1u_2,1).
\]
So, by Eq.~\eqref{eqf12},
\begin{align*}
\mu(G'\setminus u_0,1)=\mu(G'\setminus u,1)&=\mu(G\setminus u,1)-\mu(G'\setminus uu_1u_2,1)\\
&=\mu(F_1,1)\mu(F_2,1)-\mu(F_1\setminus u_1,1)\mu(F_2\setminus u_2,1)\\
&=\mu(F_1,1)\mu(F_2,1)+\frac{\mu(F_1,1)\mu(F_2\setminus u_2,1)^2}{\mu(F_2,1)}\\
&=\frac{\mu(F_1,1)\left(\mu(F_2,1)^2+\mu(F_2\setminus u_2,1)^2\right)}{\mu(F_2,1)}\\
&>0.
\end{align*}
This shows that $1$ is not a root of $G'\setminus u_0$ and $G'\setminus u$. Therefore, $G'$ is $1$-critical.
\end{proof}

\noindent
{\bf Example.}
Let $W_n^+$ and $F_n^+$ be the graph obtained from $W_n$ and $F_n$ by adding an edge (the red edge shown in Fig. \ref{figadd}), respectively.
By Theorem \ref{addedges} and Propositions \ref{wn}--\ref{fn}, $W_n^+$ and $F_n^+$ are $1$-critical.

\begin{figure}[htbp]
\centering
\begin{minipage}{.495\linewidth}
\centering
\begin{tikzpicture}
\filldraw [black] (-3.5,0) circle (2pt);
\filldraw [black] (-2.5,0) circle (2pt);
\filldraw [black] (-1.5,0) circle (2pt);
\filldraw [black] (0,0) circle (2pt);
\filldraw [black] (1,0) circle (2pt);
\filldraw [black] (2,0) circle (2pt);
\draw  [black](0,0)--(1,0)--(2,0);
\draw [black] (1,0)--(1.75,0.75);
\draw [dashed] (0,0)--(-1.5,0);
\draw [black] (-3.5,0)--(-2.5,0)--(-1.5,0);
\draw [black] (-3.25,0.75)--(-2.5,0);
\filldraw [black] (1.75,0.75) circle (2pt);
\filldraw [black] (-3.25,0.75) circle (2pt);
\draw[red]  (-3.25,0.75) to[out=30, in=135]  (-1.5,0);
\node at (-0.75,-0.5) {$W_n^+$};
\end{tikzpicture}
\end{minipage}
\begin{minipage}{.495\linewidth}
\centering
\begin{tikzpicture}
\filldraw [black] (-5.5,0) circle (2pt);
\filldraw [black] (-4.5,0) circle (2pt);
\filldraw [black] (-3.5,0) circle (2pt);
\filldraw [black] (-2.5,0) circle (2pt);
\filldraw [black] (-1.5,0) circle (2pt);
\filldraw [black] (0,0) circle (2pt);
\filldraw [black] (1,0) circle (2pt);
\filldraw [black] (2,0) circle (2pt);
\draw  [black](0,0)--(1,0)--(2,0);
\draw [black] (1,0)--(1.75,0.75);
\draw [dashed] (0,0)--(-1.5,0);
\draw [black] (-5.5,0)--(-4.5,0)--(-3.5,0)--(-2.5,0)--(-1.5,0);
\draw [black] (-5.25,0.75)--(-4.5,0);
\filldraw [black] (-3.5,1) circle (2pt);
\filldraw [black] (1.75,0.75) circle (2pt);
\filldraw [black] (-5.25,0.75) circle (2pt);
\filldraw [black] (-2.5,1) circle (2pt);
\draw [black] (-3.5,0)--(-3.5,1);
\draw [black] (-2.5,0)--(-2.5,1);
\node at (-4.5,-0.35) {\small $v_1$};
\node at (-3.5,-0.35) {\small $v_2$};
\node at (-2.5,-0.35) {\small $v_3$};
\node at (1,-0.35) {\small $v_{n-6}$};
\draw[red]  (-5.25,0.75) to[out=30, in=135]  (-3.5,0);
\node at (-1,-0.7) {$F_n^+$};
\end{tikzpicture}
\end{minipage}
\caption{The graphs $W_n^+$ and $F_n^+$ (left to right).}
\label{figadd}
\end{figure}
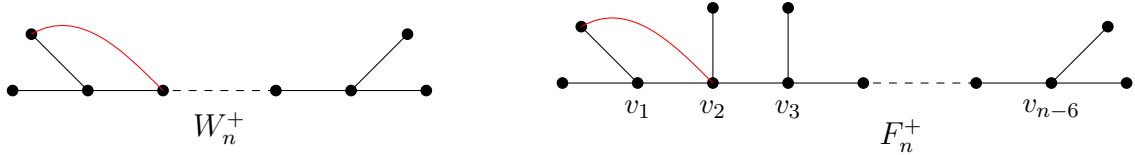

\section*{Appendix A}

Firstly, we define a function in SageMath to determine whether a given graph with $1$ as a matching polynomial root is $1$-critical.

\begin{lstlisting}
def delete(g):
    for v in g.vertices():
        h = g.copy()
        h.delete_vertex(v)
        hm = h.matching_polynomial()
        hresult = hm.subs(x=1)
        if hresult == 0:
            return False
    return True
\end{lstlisting}

The code below is used to show there is no $1$-critical trees of order $n$, where $n=3,4,5,7$.

\begin{lstlisting}
from sage.graphs.graph import Graph

n=8  # 3,4,5,7 
graphSet = []
for g in graphs.trees(n):
    m_p = g.matching_polynomial()
    result = m_p.subs(x=1)
    if result == 0:
        if delete(g):
            graphSet.append(g)
            
print("resulting graph:", len(graphSet))
\end{lstlisting}

The code below is used to find out all connected $1$-critical graphs of order $7$.

\begin{lstlisting}
from sage.graphs.graph import Graph

n = 7
graphSet = []
for g in graphs.nauty_geng(n):
    m_p = g.matching_polynomial()
    result = m_p.subs(x=1)
    if result == 0:
        if delete(g):
            graphSet.append(g)

print("resulting graph:", len(graphSet))
for g in graphSet:
    g.show()
    print("matching_polynomial:", g.matching_polynomial())
\end{lstlisting}

\end{document}